\documentclass[11pt, a4paper]{article}

\usepackage[a4paper, nomarginpar]{geometry}	
\usepackage[dvipsnames]{xcolor}
\usepackage[utf8]{inputenc}
\usepackage[english]{babel}
\usepackage{csquotes}
\usepackage{amsmath}
\usepackage{amsthm}
\usepackage{amssymb}
\usepackage{amsfonts}
\usepackage{mathtools}
\usepackage{mathrsfs}
\usepackage{tikz-cd}
\usepackage{layout}
\usepackage{framed}
\usepackage{graphicx}
\usepackage{tensor}
\usepackage{esvect}
\usepackage[hidelinks]{hyperref}
\usepackage{pdfpages}
\usepackage{tocloft}

\usepackage[
  backend=biber,
  style=numeric, 
  sorting=nyt
]{biblatex}

\colorlet{shadecolor}{gray!25}   
\renewenvironment{leftbar}{%
  \MakeFramed {\advance\hsize-\width \FrameRestore}}%
{\endMakeFramed}

\theoremstyle{plain}
\newtheorem{lma}{Lemma}[section]
\newtheorem{lemma}[lma]{Lemma}
\newtheorem{prop}[lma]{Proposition}

\newtheorem{theorem}[lma]{Theorem}

\newtheorem{corollary}[lma]{Corollary}
\theoremstyle{definition}
\newtheorem{definition}[lma]{Definition}
\newtheorem{exampleus}[lma]{Example}
\newenvironment{example}[1][]
    {\begin{leftbar} \vspace{-9pt} \begin{exampleus}[#1]}
    {\end{exampleus} \vspace{-9pt}\end{leftbar}}

\newtheorem{remark}[lma]{Remark}

\newcommand{\N}{\mathbb{N}}
\newcommand{\Z}{\mathbb{Z}}

\newcommand{\R}{\mathbb{R}}

\newcommand{\scrF}{\mathscr{F}}

\newcommand{\scrX}{\mathscr{X}}

\newcommand{\scrD}{\mathscr{D}}

\newcommand{\calD}{\mathcal{D}}

\newcommand{\calM}{\mathcal{M}}
\newcommand{\calN}{\mathcal{N}}

\newcommand{\calR}{\mathcal{R}}

\newcommand{\calU}{\mathcal{U}}

\newcommand{\rmT}{\mathrm{T}}

\newcommand{\pr}{\mathrm{pr}}

\newcommand{\Mod}{\mathsf{Mod}}

\newcommand{\cifty}{C^\infty}

\newcommand{\supp}{\mathrm{supp}\,}

\newcommand{\id}{\mathrm{id}}
\newcommand{\im}{\mathrm{im}}

\newcommand{\op}{\mathrm{op}}

\newcommand{\rest}[2]{\left.#1\right|_{#2}}
\newcommand{\parest}[2]{\rest{\left(#1\right)}{#2}}
\newcommand{\dd}[2]{\frac{\partial #1}{\partial #2}}

\newcommand{\bld}[1]{\boldsymbol{#1}}
\newcommand{\uline}[1]{\underline{#1}}

\definecolor{linkcol}{HTML}{1C549E}

\hypersetup{colorlinks=true, allcolors=linkcol}

\title{Graded Vector Bundles}
\author{JV, RS}

\begin{document}

\begin{flushright}
\today
\end{flushright}
\vspace{0.7cm}
\begin{center}

\baselineskip=13pt {\Large \bf{On the Integrability of Distributions in $\mathbb{Z}$-graded Geometry}\\}
 \vskip0.5cm
 {\large{Rudolf \v{S}molka$^{1}$, Jan Vysoký$^{1}$}}\\
 \vskip0.6cm
$^{1}$\textit{Faculty of Nuclear Sciences and Physical Engineering, Czech Technical University in Prague\\ Břehová 7, 115 19 Prague 1, Czech Republic, rudolf.smolka@cvut.cz}\\ 
\vskip0.3cm
\end{center}

\begin{abstract}
Integrability of distributions on $\Z$-graded manifolds is examined. First, the Local Frobenius theorem -- the local existence of flat coordinates for an involutive distribution -- is proved. Then, two different notions of an integral submanifold are discussed. It is found that, for the stronger of the two notions, the Global Frobenius theorem holds, but as one implication only: involutivity implies integrability. It is then shown that integrability implies a certain weaker version of involutivity. Simple counterexamples of the converse implications are given.
\end{abstract}

{\textit{Keywords}: graded manifolds, Frobenius theorem, distributions, integral submanifolds, homological vector fields} 

\vfill

\begingroup
  \hypersetup{hidelinks}
  \tableofcontents
\endgroup

\vfill

\newpage

\section{Introduction and Summary}

One of the cornerstones of differential geometry and the theory of differential equations is the Frobenius theorem, which characterizes when a smooth distribution on a manifold admits integral submanifolds. Its origins lie in the classical problem of determining when a system of first-order partial differential equations is completely integrable, namely in the 19th century works of Deahna \cite{Deahna1840}, Clebsch \cite{Clebsch1866} and Frobenius \cite{Frobenius1877}. In today's parlance, the problem may be stated as follows: given a (smooth, regular) distribution on a smooth manifold, does there, through every point, pass a submanifold whose tangent spaces span the fibers of the distribution? The powerful and elegant answer is given by what we now call the Frobenius theorem -- it is both sufficient and necessary for the distribution to be involutive, i.e. closed under the commutator of vector fields.

The past decades have seen a surge of interest in $\N$-, $\Z$- or $\Z_2^n$-graded manifolds, on top of $\Z_2$-graded manifolds (supermanifolds) so thoroughly studied in the past century. The Frobenius theorem has naturally been one of the subjects of exploration, though a $\Z$-graded investigation is, to our knowledge, still missing. The literature is naturally richest in the supermanifold case, where the Frobenius theorem appeared already in the 1984 short paper \cite{Bruzzo1985}. Among other works, let us point out the 1997 paper \cite{Monterde1997}, which closely resembles the text presented here, both in content and in form. To mention more recent works, the $\N$-graded case of (what we call here) the Local Frobenius theorem appears in the 2024 paper \cite{NFT}. For $\Z_2^n$-graded manifolds, both the Local and Global versions appear in the 2016 preprint \cite{Z2nFT}, though the version of the Global Frobenius theorem there seems to be in conflict with the one in \cite{Monterde1997}; the authors of the former obtain a stronger result with a weaker notion of an integral submanifold.

This paper is not intended to be self-contained inasmuch as some knowledge of $\Z$-manifold theory is necessary to fully follow some of the arguments within. See \cite{GTGM} for a comprehensive introduction into $\Z$-manifolds. We also build on some results from our previous works, namely \cite{3GVB} and \cite{Zflows}. For this reason, we provide only a very brief recollection of notation in Section \ref{section_2}. That being said, we believe that only a passing knowledge of, or interest in, graded manifolds of any kind is necessary to absorb the main ideas and results of the paper. Let us stress that we treat only regular, not singular, distributions. The singular case is much more involved already in the smooth case (\cite{Androulidakis2009}, \cite{CrainicFernandes2011}), and we are not aware of any graded ventures in this direction.

The main body of the text is composed of technical statements accompanied by detailed proofs. Thus, we believe it useful to provide a short summary of the text, along with a highlight of the main results.

We begin \textbf{Section 2 -- Distributions and the Local Frobenius Theorem} with a reminder of notation and conventions. The goal of this section is to show the existence of flat coordinates for any involutive distribution -- the Local Frobenius theorem (Theorem \ref{thm_local_frob}). This is done by mimicking the work done for other graded manifolds, namely \cite{Z2nFT} and \cite{NFT}. As was so keenly observed in \cite{Monterde1997}, the proof of Theorem \ref{thm_local_frob} consists of two parts. The first is to show that any involutive distribution locally admits a frame of commuting vector fields, which is done algebraically and without much difficulty. The second part is to find coordinates, in which such frame is given by the coordinate vector fields: the so-called flat coordinates for the distribution. This task is, in essence, analytic -- one needs to solve a (possibly infinite) set of partial differential equations. Here we take advantage of our previous work on flows on $\Z$-manifolds \cite{Zflows}.

In \textbf{Section 3 -- Tangent Vector Fields and Integral Submanifolds}, we begin collecting observations that will eventually lead to the Global Frobenius theorem (Theorem \ref{thm_global_frob}). In particular, we seek to identify a suitable notion of an integral submanifold for a distribution. We present two non-equivalent possibilities (Definition \ref{def_integral_submanifold_1}). The weaker one, which we choose to call ``fiberwise integral'' compares only tangent spaces with fibers of the distribution. This notion was used to define integral submanifolds in \cite{Z2nFT}. The second concept, to which we assign the name “integral,” is not restricted to points; instead, one compares the module of vector fields on the submanifold with the vector fields in the distribution, making use of the notion of a tangent vector field (Definition \ref{def_tangent_vector_field}). We say that a distribution $\scrD$ on $\calM$ is integrable, if every point of $M$ is contained within (the image of) an integral submanifold for $\scrD$.

We proceed with an alternative way to define fiberwise integral and integral submanifolds using the language of pullback bundles (Proposition \ref{prop_equivalently_integral}). To do this, we recall some relevant results from the theory of sheaves, graded vector bundles, and category theory. We finish the section by giving an example of an involutive distribution that admits both integral and (merely) fiberwise integral submanifolds, Example \ref{example_strongly_and_weakly_integral_submanifolds}.


In \textbf{Section 4 -- Global Frobenius Theorem}, we start by an obvious notion of equivalence for submanifolds (Definition \ref{def_equivalent_submanifolds}).
We show that, up to this equivalence, two integral submanifolds of an involutive distribution must be the same (Proposition \ref{prop_similarly_immersed_strongly_integral}), and that this is not true for fiberwise integral submanifolds (Corollary \ref{cor_nonequivalently_integral}). We visit the example of level-sets of a submersion (Example \ref{example_kernel_of_sumbersion}). The highlight of the section is the Global Frobenius Theorem (Theorem \ref{thm_global_frob}) in which we show that on a $\Z$-manifold with an involutive distribution, every point is contained within a unique (up to the discussed equivalence) maximal connected integral submanifold. In this sense, we prove involutivity to be a sufficient, but not necessary, condition for integrability. We would like to contrast this result with the corresponding result in \cite[Theorem 5.6]{Monterde1997} for supermanifolds, where the authors identify an additional condition on the foliation of the underlying smooth manifold.

In \textbf{Section 5 -- Pointwise Involutivity and the Converse Implication}, we begin with two examples (Example \ref{example_integrable_noninvolutive_distr}, Example \ref{example_integrable_noninvolutive_distr2}) which demonstrate that involutivity of a distribution is indeed only a sufficient condition for integrability. This is in contrast with the $\Z_2^n$-graded result in \cite[Theorem 7.2]{Z2nFT}, where the result reached is an ``if and only if'' type of statement. We then identify a weaker notion of involutivity, the so-called ``pointwise involutivity'' (Definition \ref{def_pointwise_involutive}). In Theorem \ref{thm_on_foliated_distributions} we find that an integrable distribution on a $\Z$-manifold $\calM$ must be pointwise involutive. However, a pointwise involutive distribution need not be integrable, as demonstrated in Example \ref{example_pointwise_involutive_nonintegrable}.

\section*{Acknowledgments}

The research was supported by grant GA\v{C}R 24-10031K. The authors are also grateful for financial support from M\v{S}MT under grant no. RVO 14000 and from the Grant Agency of the Czech Technical University in Prague under grant no. SGS25/163/OHK4/3T/14.

\section{Distributions and the Local Frobenius Theorem}\label{section_2}

Let us start with a review of notation. For a concise recollection of the topic of $\Z$-manifolds, the reader is encouraged to see either of our previous works \cite{3GVB}, \cite{Zflows}. For a thorough introduction to $\Z$-graded geometry, see \cite{GTGM}.

A $\Z$-graded manifold, or a $\Z$-manifold for short, is given as a pair $\calM = (M, \cifty_\calM)$, where $M$ is the underlying smooth manifold, and $\cifty_\calM$ is the structure sheaf. We denote as $\op(M)$ the set of all open subsets of $M$, and as $\op_p(M)$ the set of all open neighborhoods of a point $p$ in $M$. For any $U \in \op(M)$, the elements $f \in \cifty_\calM(U)$ are called graded functions and they each carry an integer degree $|f|\in \Z$. Their commutativity is determined by the parity of the degrees $f g = (-1)^{|f| |g|} \,g f $, where $|fg| = |f| + |g|$. 

The local model for $\Z$-manifolds is $U^{(m_j)} := (U, \cifty_{(m_j)})$, where $(m_j)\equiv (m_j)_{j \in \Z}$ is a sequence of non-negative integers, only finitely many of which are non-zero, $U$ is an open subset of $\R^{m_0}$, and $\cifty_{(m_j)}$ assigns to every $V \in \op(U)$ the graded algebra $\cifty_{(m_j)}(V)$ of formal power series
\begin{equation}
    f = \sum_{\bld{p}} f_{\bld{p}} \, (\xi^{1})^{p_1} \cdots (\xi^{\hat{m}})^{p_{\hat{m}}}.
\end{equation}
Here $f_{\bld{p}}$ are smooth functions on $V$ carrying degree zero, and $\{\xi^a\}_{a = 1}^{\hat{m}}$ are formal variables, also called graded coordinates, each carrying a non-zero degree. The sum ranges over multi-indices $\bld{p}$ such that each term in the series has the same degree -- only homogeneous elements are allowed by construction, see \cite{GTGM}. Above we wrote $\hat{m} := \sum_{j \neq 0}m_j$. 

If $\calM$ is locally isomorphic to $U^{(m_j)}$, we say that $(m_j)$ is the graded dimension of $\calM$. If $\{x^\mu\}_{\mu = 1}^{m_0}$ are the standard coordinates on $U \subseteq \R^{m_0}$, we say that $\{x^\mu\}_{\mu = 1}^{m_0}$ together with $\{\xi^a\}_{a = 1}^{\hat{m}}$ are coordinates on $\calM|_{U}$. We often denote them together as $\{x^i\}_{i = 1}^{m}$, where $m := m_0 + \hat{m} = \sum_{j \in \Z} m_j$.

Morphisms in the category of $\Z$-manifolds, or graded smooth maps, are also pairs $\varphi \equiv (\uline{\varphi}, \varphi^\ast) : \calM \to \calN$, where $\uline{\varphi} : M \to N$ is a smooth map and $\varphi^\ast : \cifty_\calN \to \uline{\varphi}_\ast \cifty_\calM$ is a morphism of sheaves of graded algebras. Vector fields on $\calM$ are defined as graded derivations of graded functions, and their sheaf is denoted as $\scrX_\calM$. They form a sheaf of $\cifty_\calM$-modules. We will often drop the word ``graded'' to keep its proliferation in the text manageable. Vector fields on $\calM$ always admit a local frame, namely the coordinate local frame $\{\partial_{x^i}\}_{i = 1}^{m}$, with degrees $|\partial_{x^i}| = - |x^i|$. We use the word ``frame'' to refer to free generators of a module. We keep the word ``basis'' to refer to free generators of a vector space. As the sheaf of $\cifty_\calM$-modules $\scrX_\calM$ is locally freely and finitely generated, it forms the sheaf of sections of a graded vector bundle $T \calM$. Since $|\partial_{x^i}| = - |x^i|$, the rank of the tangent bundle $T \calM$ is $(m_{-j})$, meaning there are, locally, $m_j$ free generators of degree $-j$.

Now onto the substance of the paper.

\begin{definition}
Let $\calM$ be a graded manifold. Then any subbundle $\scrD$ of the tangent bundle $T\calM$ is called a \textbf{distribution} on $\calM$. We will use the same letter $\scrD$ to refer to its sheaf of sections. If $M$ is not connected, we also require $\scrD$ to have constant rank (this is automatically satisfied if $M$ is connected). A distribution $\scrD$ is called \textbf{involutive} if $\scrD(U)$ is closed under the graded commutator $[X,Y] := X \circ Y - (-1)^{|X||Y|}\, Y \circ X$ for every $U \in \op(M)$.
\end{definition}

\begin{lemma}\label{lemma_linearly_indep_vectors}
    Let $\scrD$ be a distribution on $\calM$, and let $X_1, \dotsc, X_r$ be a local frame for $\scrD$ on some $U \in \op(M)$. Then $X_1|_{p}, \dotsc, X_r|_{p}$ are linearly independent vectors in $T_p\calM$.
\end{lemma}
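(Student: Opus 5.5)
The plan is to use the fact that $\scrD$ is a subbundle of $T\calM$. Subbundles of graded vector bundles admit local frames that extend to local frames of the ambient bundle. Concretely, I will first pass to a smaller neighborhood $V \in \op_p(M)$, $V \subseteq U$, on which there is a frame $Y_1, \dotsc, Y_r, Y_{r+1}, \dotsc, Y_m$ of $\scrX_\calM|_V$ whose first $r$ members form a frame of $\scrD|_V$. This is the defining property of a subbundle, as in \cite{3GVB}: locally, a complement of free generators exists.

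Next I compare the two frames of $\scrD|_V$. Both $\{X_k|_V\}$ and $\{Y_k\}$ freely generate $\scrD(V)$, so there is a matrix $A$ of graded functions on $V$ with $X_k = \sum_l A_k{}^l Y_l$, and a matrix $B$ with $Y_l = \sum_k B_l{}^k X_k$. By freeness, $AB = 1$ and $BA = 1$ as matrices of graded functions.

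The final step is evaluation at $p$. The body map $f \mapsto f(p)$ is an algebra morphism $\cifty_\calM(V) \to \R$ that kills all functions of non-zero degree. Applying it entrywise gives real matrices with $A(p)B(p) = 1$, so $A(p)$ is invertible. The vectors $Y_1|_p, \dotsc, Y_m|_p$ form a basis of $T_p\calM$, since the $Y_i$ form a frame of $\scrX_\calM|_V$ and the value at $p$ of a frame of the tangent sheaf is a basis of $T_p\calM$. Writing each $Y_i$ in terms of the coordinate frame and evaluating shows this directly. It follows that the $X_k|_p = \sum_l A_k{}^l(p)\, Y_l|_p$ are images of the linearly independent vectors $Y_1|_p, \dotsc, Y_r|_p$ under an invertible linear map, and are therefore linearly independent.

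The main obstacle is justifying carefully that $X|_p$ is compatible with module operations, namely $(fX)|_p = f(p)\,X|_p$. Together with the fact that a frame evaluates to a basis, this is where the grading matters. Because the entries of $A$ of non-zero degree vanish at $p$, $A(p)$ is block diagonal by degree, which is consistent with $T_p\calM$ being a graded vector space. I would rely on the conventions for $T_p\calM$ from \cite{3GVB} for these facts.
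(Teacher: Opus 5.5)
Your proof is correct and follows essentially the same route as the paper. Both arguments take an adapted local frame $Y_1,\dotsc,Y_m$ of $\scrX_\calM$ from the subbundle property and reduce the claim to the fact that a frame of $\scrX_\calM$ evaluates to a basis of $T_p\calM$. The only difference is cosmetic: the paper replaces $Y_1,\dotsc,Y_r$ by $X_1|_V,\dotsc,X_r|_V$ to get a new ambient frame, and leaves the needed check ``to the definition of a frame'', whereas you evaluate the invertible transition matrix at $p$, which is exactly that check made explicit.
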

\begin{proof}
    Consider some $p \in U$. As $\scrD$ is a subbundle of $\scrX_\calM$, there exists a local frame $Y_1, \dotsc, Y_m$ for $\scrX_\calM$ on some $V \in \op(U)$ such that $Y_1, \dotsc, Y_r$ is a local frame for $\scrD$, see \cite{3GVB}. Hence, $X_1|_{V}, \dotsc, X_r|_{V}, $ together with $Y_{r+1}, \dotsc, Y_m$ also form a local frame for $\scrX_\calM$. This can be verified directly from the definition of a frame. Consequently, they are linearly independent at every point and the result follows. Note that even in non-graded geometry it is not sufficient that $\scrD$ be a sheaf of submodules of $\scrX_\calM$. 
\end{proof}

The following lemma makes certain we can verify involutivity of a distribution both locally and globally.

\begin{lemma} \label{lemma_local_to_global_involutivity}
Let $\scrD$ be a distribution on $\calM$. Then the following are equivalent:
\begin{enumerate}
\item $\scrD$ is involutive.
\item $\scrD(M)$ is closed under the graded commutator.
\item For every $p \in M$ there exists $U \in \op_p(M)$ such that $\scrD(U)$ is closed under the graded commutator.
\item For every $p \in M$ there exists $U \in \op_p(M)$ together with a frame $X_1, \dotsc, X_r$ for $\scrD(U)$ such that $[X_i, X_j] \in \scrD(U)$ for all $i,j \in \{1, \dotsc, r\}$.
\end{enumerate}
\end{lemma}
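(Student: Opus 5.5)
I will prove the cycle of implications $1 \Rightarrow 2 \Rightarrow 3 \Rightarrow 4 \Rightarrow 1$. The implication $1 \Rightarrow 2$ is immediate, since $M \in \op(M)$. For $2 \Rightarrow 3$, I simply take $U = M$ for every $p$. For $3 \Rightarrow 4$, I pick, for a given $p$, some $U \in \op_p(M)$ on which $\scrD(U)$ is closed under the commutator. Since $\scrD$ is a subbundle, hence locally free, I can shrink $U$ to some $V \in \op_p(U)$ on which $\scrD$ admits a frame $X_1, \dotsc, X_r$. The remaining point is that closedness survives shrinking, which is not automatic: elements of $\scrD(V)$ need not extend to $U$. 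I therefore postpone this step and obtain it as a byproduct of the argument for $4 \Rightarrow 1$. That argument shows that a local closedness condition on a neighbourhood of every point implies closedness on every open set. This yields $3 \Rightarrow 1$ directly, and then $1 \Rightarrow 4$ is trivial after choosing local frames. So the cycle is really $1 \Rightarrow 2 \Rightarrow 3 \Rightarrow 1$ together with $1 \Leftrightarrow 4$.

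The core step is $4 \Rightarrow 1$, and the proof of $3 \Rightarrow 1$ will be the same argument. Let $W \in \op(M)$ and $X, Y \in \scrD(W)$. Since $\scrX_\calM$ is a sheaf, it suffices to show that $[X,Y]|_{W_p} \in \scrD(W_p)$ for an open cover $\{W_p\}$ of $W$. By the sheaf property of $\scrD$ as a subsheaf, these local sections then glue to $[X,Y]$, which therefore lies in $\scrD(W)$. Here I use that the graded commutator commutes with restrictions; this holds because vector fields are local operators, i.e.\ sheaf morphisms. Take $p \in W$ and the set $U$ with frame $X_i$ from condition 4, and put $W_p = U \cap W$. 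On $W_p$ I expand $X|_{W_p} = f^i X_i$ and $Y|_{W_p} = g^j X_j$ with $f^i, g^j \in \cifty_\calM(W_p)$. The graded Leibniz rule gives
\[ [f^i X_i, g^j X_j] = f^i X_i(g^j)\, X_j \pm f^i g^j [X_i, X_j] \pm g^j X_j(f^i)\, X_i , \]
with appropriate Koszul signs. Each term lies in $\scrD(W_p)$, because $[X_i,X_j]|_{W_p} \in \scrD(W_p)$ and $\scrD$ is a sheaf of $\cifty_\calM$-submodules. For $3 \Rightarrow 1$ the local argument is even simpler. The set $U$ cannot be shrunk, but a restriction to $U \cap W$ cannot be extended to $U$ either. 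I therefore first shrink to a frame neighbourhood inside $U$, where $[X_i,X_j]$ computed on $U$ restricts into $\scrD$, and then apply the previous computation.

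The main obstacle is the $3 \Rightarrow 1$ subtlety: closedness of $\scrD(U)$ does not formally pass to open subsets. I resolve it by taking a frame $X_i$ defined on a neighbourhood $V \subseteq U$ of $p$ and arranging that each $X_i$ extends to a section of $\scrD(U)$. To do this I use a bump function $\chi \in \cifty_\calM(U)$ of degree zero, with support in $V$ and equal to $1$ near $p$, and set $\tilde X_i := \chi X_i$, extended by zero. Such bump functions exist on $\Z$-manifolds by \cite{GTGM}. Then $\tilde X_i \in \scrD(U)$, so $[\tilde X_i, \tilde X_j] \in \scrD(U)$. Restricting to the neighbourhood where $\chi = 1$ gives the frame condition there, which reduces everything to $4 \Rightarrow 1$.
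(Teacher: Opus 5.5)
Your proposal is correct and is essentially the paper's proof: the trivial implications, a bump-function extension $\tilde X_i = \chi X_i$ into $\scrD(U)$ to pass from closedness of $\scrD(U)$ to the frame condition, and the graded Leibniz rule plus the sheaf property of $\scrD$ for $4 \Rightarrow 1$. The only inessential difference is that you obtain the frame condition only on the neighbourhood where $\chi = 1$, which already suffices for condition 4. The paper instead repeats the bump argument at every $q \in V$ and glues, obtaining $[X_i, X_j] \in \scrD(V)$ on the whole frame domain.
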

\begin{proof}
    The chain of implications $\mathit{1.} \implies \mathit{2.} \implies \mathit{3.}$ is trivial. To show $\mathit{3.} \implies \mathit{4.}$ take some $p \in M$ and $U \in \op_p(M)$ promised by point $\mathit{3.}$ Find some $V \in \op_p(U)$ such that there exists a frame $X_1, \dotsc, X_r$ for $\scrD(V)$. We will show that $[X_i, X_j] \in \scrD(V)$ for all $i,j \in \{1, \dotsc, m\}$ using the usual smooth bump function argument. Take some arbitrary $q \in V$, some $W \in \op_q(V)$ such that $\overline{W} \subseteq V$, where $\overline{W}$ is the closure of $W$. Consider a smooth bump function $\lambda \in \cifty_\calM(U)$ such that $\lambda|_{W} = 1$ and $\supp(\lambda) \subseteq V$. For the existence of graded smooth bump functions and partitions of unity, see \cite[Section 3.5]{GTGM}. Then extend each $X_i$ to $\tilde{X}_i \in \scrX_\calM(U)$ from $W$ by zero. Explicitly, set $\tilde{X}_i|_{V} =  \lambda|_{V} \, X_i$, $\tilde{X}_i|_{U \setminus \supp(\lambda)} = 0$. As $\scrD$ is a sheaf, there is $\tilde{X}_i \in \scrD(U)$ for every $i$. Hence, by assumption, $[\tilde{X}_i, \tilde{X}_j] \in \scrD(U)$ for all $i,j \in \{1, \dotsc, m\}$. It follows that
    \begin{equation}
           [X_i, X_j]|_{W} = [X_i|_{W}, X_j|_{W}] = [\tilde{X}_i|_{W}, \tilde{X}_j|_{W}] = [\tilde{X}_i, \tilde{X}_j]|_{W} \in \scrD(W).
    \end{equation}
    As $q \in V$ was arbitrary, and $\scrD$ is a sheaf, there must be $[X_i, X_j] \in \scrD(V)$ for any $i,j \in \{1, \dotsc, r\}$.
    
    To show $\mathit{4.}\implies \mathit{1.}$ take some $U \in \op(M)$. Let $p \in U$ be arbitrary, and let $X_1, \dotsc, X_r$ be the frame for $\scrD(V)$ for some $V \in \op_p(M)$ promised by point $\mathit4.$ Denote $W := U \cap V$, and note that $X_1|_{W}, \dotsc, X_r|_{W}$ is a frame for $\scrD(W)$. The result then follows from the fact that $[X_i|_{W}, X_j|_{W}] = [X_i, X_j]|_{W}$, Leibniz rule of the commutator, and the fact that $\scrD$ is a sheaf.
\end{proof}

\begin{definition}
    Let $\scrD$ be a  distribution on $\calM$. Let $U \in \op(M)$ with coordinates $\{x^\mu\}_{\mu = 1}^{m_0}$ and $\{\xi^a\}_{a = 1}^{\hat{m}}$ on $\rest{\calM}{U}$. We say that $x^\mu, \xi^a$ are \textbf{flat coordinates for $\scrD$} if $\{\partial_{x^\mu}\}_{\mu = 1}^{r_0}$ and $\{\partial_{\xi^a}\}_{a = 1}^{\hat{r}}$ together form a frame for $\scrD(U)$.
\end{definition}

In the above definition we distinguish degree zero and nonzero coordinates. However, as mentioned before, we will often not make this distinction and denote all local coordinates on $\calM$ together as $\{x^i\}_{i = 1}^m$. Saying that these are flat coordinates for $\scrD$ will mean that $\{\partial_{x^i}\}_{i = 1}^r$ form a frame for $\scrD(U)$. 

The local Frobenius theorem is simply the statement that, for every involutive distribution, there exist flat coordinates for the distribution around every point. We formally state and prove it at the end of the section. To do so, we start by showing that an involutive distribution locally always admits a frame of mutually commuting vector fields, see \cite[Theorem 3.4]{Monterde1997}, \cite[Proposition 5.2]{Z2nFT} or \cite[Theorem 6.4]{NFT}.

\begin{prop}\label{prop_commuting_frame}
Let $\scrD$ be a distribution on $\calM$. Then $\scrD$ is involutive if and only if every point $p \in M$ has a neighborhood, on which $\scrD$ has a local frame $Y_1, \dotsc, Y_r$ such that $[Y_i, Y_j] = 0$ for all $i,j \in \{1, \dotsc, r\}$.
\end{prop}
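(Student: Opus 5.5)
The reverse implication is immediate: if around every $p$ there is a frame $Y_1,\dotsc,Y_r$ of $\scrD(U)$ with $[Y_i,Y_j]=0\in\scrD(U)$, then condition \emph{4.} of Lemma \ref{lemma_local_to_global_involutivity} holds, so $\scrD$ is involutive. The content lies in the forward implication, which I would prove by the standard \enquote{row reduction} argument adapted to the graded setting.

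Fix $p\in M$ and a local frame $X_1,\dotsc,X_r$ of homogeneous vector fields for $\scrD$ on some $U\in\op_p(M)$ carrying coordinates $\{x^i\}_{i=1}^m$. Write $X_k = A_k^{\ i}\,\partial_{x^i}$ with $A\in\cifty_\calM(U)$ an $r\times m$ matrix of graded functions. By Lemma \ref{lemma_linearly_indep_vectors}, the vectors $X_k|_p$ are linearly independent in $T_p\calM$. This should translate into the statement that the degree-preserving part of $A$ evaluated at $p$ (the body of the block of entries of degree zero, $A_k^{\ i}$ with $|X_k|=-|x^i|$) has full rank $r$. It then contains an invertible $r\times r$ submatrix, obtained by choosing $r$ coordinate indices respecting degrees: for each degree $j$, $r_j$ coordinates of degree $-j$. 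After reordering coordinates, write $A=(B\;C)$ with $B$ the square block. Its degree-zero entries have invertible body at $p$, so $B$ is invertible as a graded matrix on a smaller neighbourhood $V$. Concretely, a graded function whose body does not vanish is invertible, and a block matrix of graded functions is invertible if its degree-zero diagonal blocks have invertible bodies; this is the standard graded linear algebra fact. Define $Y_k := (B^{-1})_k^{\ l}X_l$. These are homogeneous of degree $|X_k|$, still form a frame for $\scrD(V)$, and have the shape
\begin{equation}
Y_k = \partial_{x^k} + \sum_{i>r} E_k^{\ i}\,\partial_{x^i}.
\end{equation}

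Next I compute the commutator. Since $[\partial_{x^k},\partial_{x^l}]=0$,
\begin{equation}
[Y_k,Y_l] = \sum_{i>r} F_{kl}^{\ i}\,\partial_{x^i}
\end{equation}
for some graded functions $F_{kl}^{\ i}$, so the commutator has no $\partial_{x^1},\dotsc,\partial_{x^r}$ components. On the other hand, involutivity gives $[Y_k,Y_l]=G_{kl}^{\ n}Y_n$. Comparing the coefficients of $\partial_{x^n}$ for $n\le r$ yields $G_{kl}^{\ n}=0$, because the coordinate fields form a frame and the coefficient of $\partial_{x^n}$ in $G_{kl}^{\ s}Y_s$ is exactly $G_{kl}^{\ n}$. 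Hence $[Y_k,Y_l]=0$.

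The main obstacle is the graded linear algebra in the first step. One must justify that pointwise linear independence yields a degree-respecting invertible minor. One must also check that the inverse of a graded matrix whose degree-zero part has invertible body exists, and that sign conventions for left versus right coefficients do not spoil the homogeneity of the $Y_k$. I would handle this by working with the $\Z$-graded $\cifty_\calM(V)$-module $\scrX_\calM(V)$ and citing the frame results of \cite{3GVB}. In particular, I would use that $r$ homogeneous sections that are linearly independent at $p$ can be completed by coordinate fields to a local frame. Using this completion gives the square invertible block directly, without any explicit determinant arguments.
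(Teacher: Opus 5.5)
Your proposal is correct and is essentially the paper's proof. You relabel coordinates so that $X_1,\dotsc,X_r,\partial_{x^{r+1}},\dotsc,\partial_{x^m}$ form a frame, normalize to $Y_k=\partial_{x^k}+\sum_{i>r}(\cdots)\,\partial_{x^i}$, and observe that the commutators then have no $\partial_{x^1},\dotsc,\partial_{x^r}$ components, so involutivity forces them to vanish. The paper obtains the normalizing matrix just as your closing remark suggests: it expands each $\partial_{x^i}$, $i\le r$, in the completed frame, which avoids explicitly inverting a graded matrix.
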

\begin{proof}
The proof is essentially a $\Z$-graded copy of the proof in \cite{Z2nFT}. The ``if'' direction follows immediately, so consider some involutive distribution $\scrD$. As the statement is local in nature, we may, without loss of generality, consider $\calM$ to be a graded domain $\calM = U^{(m_j)}$ for some $U \subseteq \R^{m_0}$. We may assume as well the existence of a global frame $\{X_{i}\}_{i = 1}^r$ for $\scrD$. Denote the global coordinates on $U^{(m_j)}$ as $\{x^i\}_{i = 1}^m$.

By Lemma \ref{lemma_linearly_indep_vectors} the vectors $\{X_i|_p\}_{i = 1}^r$ are linearly independent at every point $p \in U$. By shrinking $U$ and relabeling $\{x^i\}$ if necessary, we may thus assume that $X_1, \dotsc, X_r, \partial_{x^{r+1}}, \dotsc, \partial_{x^m}$ form a global frame for all vector fields on $U^{(m_j)}$. Note that we have another global frame for vector fields on $U^{(m_j)}$, namely the coordinate one $\partial_{x^1}, \dotsc, \partial_{x^m}$. As a result, there must exist graded functions ${a_i}^j$, ${\omega_i}^j$, ${b_i}^\ell$ and ${\lambda_i}^\ell$ for all $i,j \in \{1, \dotsc, r\}$ and $\ell \in \{r+1, \dotsc, m\}$ such that
\begin{equation}
    X_i = \sum_{j = 1}^r {a_i}^j \partial_{x^j} + \sum_{\ell = r+1}^m {b_i}^\ell \partial_{x^\ell}, \qquad
    \partial_{x^i} = \sum_{j = 1}^r {\omega_i}^j X_j + \sum_{\ell = r+1}^m {\lambda_i}^\ell \partial_{x^\ell},
\end{equation}
for any $i \in \{1, \dotsc, r\}$. By combining these expressions we find that for any $i \in \{1, \dotsc, r\}$ there is
\begin{equation}
    \partial_{x^i} = \sum_{j = 1}^r {\omega_i}^j \left(\sum_{k = 1}^r {a_j}^k \partial_{x^k} + \sum_{\ell = r+1}^m {b_j}^\ell \partial_{x^\ell} \right) + \sum_{\ell = r+1}^m {\lambda_i}^\ell \partial_{x^\ell},
\end{equation}
hence, in particular, $\sum_{j = 1}^r {\omega_i}^j \, {a_j}^k = {\delta_i}^k$. For any $i \in \{1, \dotsc, r\}$ denote
\begin{equation}
    Y_i := \sum_{j = 1}^r  {\omega_i}^j X_j = \sum_{j = 1}^r  {\omega_i}^j \left(\sum_{k = 1}^r {a_j}^k \partial_{x^k} + \sum_{\ell = r+1}^m {b_j}^\ell \partial_{x^\ell} \right) = \partial_{x^i} + \sum_{j = 1}^r\sum_{\ell = r+1}^m {\omega_i}^j {b_j}^\ell \partial_{x^\ell}.
\end{equation}
Note two things: $Y_1, \dotsc, Y_r$ form a frame for $\scrD(U)$, and every $Y_i$ is of the form $Y_i = \partial_{x^i} + \sum_{\ell = r+1}^m \, {f_i}^\ell \partial_{x^\ell}$ for some graded functions ${f_i}^\ell \in \cifty_{(m_j)}(U)$. The reader can easily verify that, as a result, also the commutators are of the form
\begin{equation}
    [Y_i, Y_j] = \sum_{\ell = r+1}^m {g_{ij}}^\ell \, \partial_{x^\ell},
\end{equation}
for some ${g_{ij}}^\ell \in \cifty_{(m_j)}(U)$. Now the involutivity of $\scrD$ comes into play: it dictates that ${g_{ij}}^\ell = 0$ for every $i,j \in \{1, \dotsc, r\}$ and $\ell \in \{r+1, \dotsc, m\}$. In other words, $[Y_i, Y_j] = 0$ and $Y_1, \dotsc, Y_r$ is a frame for $\scrD(U)$ formed by commuting vector fields, as desired.

\end{proof}

Now we need to recall a few facts about flows of graded vector fields. We sum them up in the form of a theorem; for more on flows on $\Z$-manifolds, the reader is referred to \cite{Zflows}. 

Recall that a \textbf{flow domain} of degree $k \in \Z$ is the graded manifold $\calM \times \R[-k]$ if $k \neq 0$, or the graded manifold $(\calM \times \R)|_{D}$ where $D$ is a flow domain on $M$, if $k = 0$. A flow domain $D$ on $M$ is simply any open subset $D \subseteq M \times R$ such that $\{t \in \R \, | \, (p,t) \in D\}$ is an open interval containing $0$ for every $p \in M$.

\begin{theorem}[Some Properties of Flows]\label{thm_flows_summary}
    Let $X$ be a global vector field on $\calM$, such that $[X,X] = 0$. Then there exists a flow $\theta_X$ of $X$. In particular, this is a graded smooth map $\theta_{X} : \calD \to \calM$, where $\calD$ is a flow domain of degree $|X|$, with the following properties:
    \begin{enumerate}
        \item $1 \otimes \partial_{t} \sim_{\theta_X} X$, where $t$ is the coordinate on $\R[-k]$.
        \item $\theta_{X}$ is a surjective submersion.
        \item For any $Y \in \scrX_\calM(M)$ there is $Y \otimes 1 \sim_{\theta_X} Y$ if and only if $[X,Y] = 0$.
    \end{enumerate}
\end{theorem}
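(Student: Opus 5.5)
Since this theorem only collects results established in \cite{Zflows}, the proof I would give is mostly a matter of pointing to the relevant statements there. Below I indicate how each item follows from the construction. The first step is to split by the parity of $k := |X|$.

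\emph{Odd $k$.} The condition $[X,X] = 2X^2 = 0$ is a genuine restriction. It makes the naive exponential a terminating series, so I would define
\begin{equation}
\theta_X^\ast(f) := f \otimes 1 + X(f) \otimes t,
\end{equation}
where $t$ is the coordinate on $\R[-k]$. Since $t^2 = 0$ for odd $k$, there are no higher terms. Multiplicativity of $\theta_X^\ast$ follows from the Leibniz rule together with $X^2 = 0$.

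\emph{Even $k \neq 0$.} Here $[X,X]=0$ holds automatically. The flow is sought as a formal series $\theta_X^\ast(f) = \sum_n \frac{1}{n!} X^n(f) \otimes t^n$, valid because $t$ is a formal variable of non-zero degree. One should check that this is the construction used in \cite{Zflows}, in particular that the body map is the identity on $M$.

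\emph{Degree $0$.} One solves the defining equation
\begin{equation}
(1 \otimes \partial_t)\circ \theta_X^\ast = \theta_X^\ast \circ X
\end{equation}
order by order in the graded coordinates. The body part of this equation is the ordinary flow ODE for $\uline{X}$ on a flow domain $D$. Each higher order in the $\xi^a$ gives a linear ODE whose coefficients depend on the lower orders. Local solutions are then glued using uniqueness.

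In every case item 1 is just the defining equation written as $1 \otimes \partial_t \sim_{\theta_X} X$, together with the initial condition $\theta_X \circ \iota_0 = \id_\calM$, where $\iota_0$ is the inclusion at $t = 0$.

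For item 2, surjectivity follows at once from $\theta_X \circ \iota_0 = \id_\calM$. For the submersion property, at points with $t = 0$ the tangent map of $\theta_X$ restricted to $T\calM \otimes 1$ is the identity, so $T\theta_X$ is surjective there. To move to other points in the degree-$0$ case, I would use the group law $\theta_s \circ \theta_t = \theta_{s+t}$, which shows each time-$t$ map is a local diffeomorphism. In the non-zero degree case the body of $\calD$ is just $M$ itself, so the $t=0$ argument already covers every point.

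For item 3, consider $\Phi := \theta_X^\ast \circ Y - (Y \otimes 1)\circ \theta_X^\ast$, a $\theta_X$-derivation. Differentiating with $1\otimes\partial_t$ and using item 1 gives
\begin{equation}
(1\otimes\partial_t)\circ\Phi = \pm\,\Phi\circ X + \theta_X^\ast\circ[X,Y],
\end{equation}
with $\Phi|_{t=0} = 0$.
\begin{itemize}
\item If $[X,Y]=0$, then $\Phi$ satisfies a homogeneous linear equation with zero initial value, so uniqueness gives $\Phi = 0$. In the formal cases this is immediate order by order in $t$.
\item Conversely, if $\Phi = 0$, evaluating the equation at $t = 0$ gives $[X,Y] = 0$.
\end{itemize}

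The main obstacle is the degree-$0$ existence and uniqueness. That is where genuine analysis is needed: infinitely many coupled ODEs and a common flow domain $D$ for all orders. I would handle it by noting that every higher-order equation is linear with coefficients defined on the domain of the body flow, so its solutions exist on the whole of $D$.
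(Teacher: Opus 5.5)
Your plan agrees with the paper's, whose proof is purely a citation: existence and item 1 come from \cite[Theorem 3.10]{Zflows}, item 2 from the right inverse $\theta_X\circ\iota_0=\id_\calM$, and item 3 from \cite[Theorem 4.4]{Zflows}. Most of what you add on top is sound, but the one explicit formula you write down is wrong as stated. Take $k=|X|$ odd, with the Koszul rules $(a\otimes b)(c\otimes d)=(-1)^{|b||c|}\,ac\otimes bd$ and $(1\otimes\partial_t)(g\otimes h)=(-1)^{k|g|}\,g\otimes\partial_t h$. Your $\theta_X^\ast(f)=f\otimes 1+X(f)\otimes t$ then gives $(1\otimes\partial_t)\theta_X^\ast(f)=(-1)^{|f|+1}X(f)\otimes 1$, so item 1 fails on every even-degree $f$ with $X(f)\neq 0$. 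The map is not multiplicative either. For $|f|$ odd and $|g|$ even, the $t$-components of $\theta_X^\ast(f)\,\theta_X^\ast(g)$ and $\theta_X^\ast(fg)$ are $X(f)g+fX(g)$ and $X(f)g-fX(g)$, regardless of $X^2$. The correct map puts $t$ on the left:
\[
\theta_X^\ast(f)=f\otimes 1+(1\otimes t)\bigl(X(f)\otimes 1\bigr)=f\otimes 1+(-1)^{|f|+1}\,X(f)\otimes t .
\]
You have also misplaced where the hypothesis enters. Multiplicativity of this map uses only the Leibniz rule and $t^2=0$, since the cross term is a multiple of $t^2$; so it holds for every odd $X$. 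What needs $X^2=\tfrac{1}{2}[X,X]=0$ is item 1. There, $\theta_X^\ast(Xf)=X(f)\otimes 1+(1\otimes t)\bigl(X^2(f)\otimes 1\bigr)$ must equal $(1\otimes\partial_t)\theta_X^\ast(f)=X(f)\otimes 1$. This is the necessity noted at the end of the proof of Lemma~\ref{lemma_local_frobenius}.

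Two further comparisons. For item 2, your argument is actually more complete than the paper's one-liner. A right inverse only makes $T\theta_X$ surjective along the image of $\iota_0$. For $|X|\neq 0$ that image is the whole body of $\calD$, but for $|X|=0$ it is just the slice $t=0$, and your group-law argument is what covers $t\neq 0$. The paper only ever uses the submersion property at $(\bld{0},0)$, where the right inverse suffices.

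For item 3, your identity $(1\otimes\partial_t)\circ\Phi=(-1)^{|X||Y|}\,\Phi\circ X+\theta_X^\ast\circ[X,Y]$ is correct. However, it is not a closed equation for $\Phi(f)$, because it involves $\Phi(Xf)$, so ``uniqueness'' needs one more step. Since $\Phi$ is a derivation along $\theta_X$, over a chart it is determined by its values on coordinates: $\Phi(f)=\sum_i\Phi(x^i)\,\theta_X^\ast(\partial_{x^i}f)$. For $[X,Y]=0$, these components satisfy the closed linear system $(1\otimes\partial_t)\Phi(x^j)=(-1)^{|X||Y|}\sum_i\Phi(x^i)\,\theta_X^\ast(\partial_{x^i}X^j)$. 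In degree $0$ this is solved order by order in the graded coordinates along body integral curves, switching charts as needed. That is the work hidden in your word ``uniqueness'', and it is what the paper outsources to \cite[Theorem 4.4]{Zflows}.
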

\begin{proof}
    Existence of $\theta_X$ and property $\mathit{1.}$ follow from the Fundamental theorem on flows on $\Z$-graded manifolds, \cite[Theorem 3.10]{Zflows}. Property $\mathit{2.}$ follows from the fact that $\theta_X$ has a right inverse, \cite[Definition 3.2]{Zflows} and property $\mathit{3.}$ is the subject of \cite[Theorem 4.4]{Zflows}.
\end{proof}

We continue with a crucial but somewhat technical lemma, which will be used inductively to prove the local Frobenius theorem.

\begin{lemma}\label{lemma_local_frobenius}
Let $U \subseteq \R^{m_0}$ be an open set containing $\bld{0} \in \R^{m_0}$, and let $U^{(m_j)}$ be a graded domain with coordinates $\{x^i\}_{i = 1}^m$. Let $r \in \{1, \dotsc, m\}$ and let $X$ be a vector field satisfying $[X,X] = 0$ and $[X, \partial_{x^j}] = 0$ for all $j \in \{1, \dotsc, r-1\}$. Furthermore, assume that the tangent vectors $\rest{\partial_{x^1}}{\bld{0}}, \dotsc, \rest{\partial_{x^{r-1}}}{\bld{0}}, \rest{X}{\bld{0}}$ are linearly independent. Then there exists $U^\prime \in \op_{\bld{0}}(U)$, $V \in \op_{\bld{0}}(\R^{m_0})$ and a  graded diffeomorphism $\chi : (U^\prime)^{(m_j)} \to V^{(m_j)}$ satisfying $\uline{\chi}(\bld{0}) = \bld{0}$, which introduces coordinates $\{y^i\}_{i = 1}^m$ on $(U^\prime)^{(m_j)}$ such that $\rest{\partial_{x^j}}{U^\prime} = \partial_{y^j}$ for all $j \in \{1, \dotsc, r-1\}$ and $\rest{X}{U^\prime} = \partial_{y^{r}}$.
\end{lemma}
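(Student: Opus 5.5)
Following the classical straightening argument, I will build the inverse of the desired chart from the flow of $X$. Write $k := |X|$. By Theorem \ref{thm_flows_summary} (after restricting $X$ to a suitable neighborhood, or first extending it by a bump function so that it is global on a slightly smaller domain while keeping $[X,X]=0$ and the commutation relations near $\bld{0}$), there is a flow $\theta_X : \calD \to U^{(m_j)}$. Since $\rest{\partial_{x^1}}{\bld{0}}, \dotsc, \rest{\partial_{x^{r-1}}}{\bld{0}}, \rest{X}{\bld{0}}$ are linearly independent, I can relabel the coordinates $x^r, \dotsc, x^m$ so that these $r$ vectors together with $\rest{\partial_{x^{r+1}}}{\bld{0}}, \dotsc, \rest{\partial_{x^m}}{\bld{0}}$ form a basis of $T_{\bld{0}}\calM$. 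The relabeling keeps $x^r$ of degree $|x^r| = -k$, because $X|_{\bld{0}}$ must replace a coordinate vector of the same degree. Let $\iota : \calS \to U^{(m_j)}$ be the graded submanifold $\{x^r = 0\}$, i.e. the embedding $\iota^*(x^r) = 0$, $\iota^*(x^i) = x^i$ for $i \neq r$. I then define
\begin{equation}
\psi := \theta_X \circ (\iota \times \id) : (\calS \times \R[-k])|_{\text{flow domain}} \to U^{(m_j)},
\end{equation}
using the coordinate $t$ of degree $-k$ in the slot of $x^r$.

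\textbf{Step 1: the pushforwards of the coordinate vector fields.} By property \textit{1.} of Theorem \ref{thm_flows_summary}, $\partial_t \sim_\psi X$. For $j < r$ I use property \textit{3.}: since $[X, \partial_{x^j}] = 0$, we have $\partial_{x^j}\otimes 1 \sim_{\theta_X} \partial_{x^j}$. Moreover, $\partial_{x^j}$ is tangent to $\calS$, and $\partial_{x^j}|_\calS \sim_\iota \partial_{x^j}$. Composing these relatedness relations gives $\partial_{x^j}\otimes 1 \sim_\psi \partial_{x^j}$ on the domain of $\psi$. The remaining fields $\partial_{x^\ell}$, $\ell > r$, on $\calS$ are related at $(\bld{0},0)$ to $\partial_{x^\ell}|_{\bld{0}}$, because $\theta_X$ restricted to $t=0$ is the identity.

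\textbf{Step 2: $\psi$ is a local diffeomorphism at $\bld{0}$.} From Step 1, the differential $T_{(\bld{0},0)}\psi$ maps a basis onto the chosen basis of $T_{\bld{0}}\calM$, so it is an isomorphism. The graded inverse function theorem (\cite{GTGM}) then gives neighborhoods on which $\psi$ is a graded diffeomorphism. I set $\chi := \psi^{-1}$, after composing with a translation of the $\R^{m_0}$-coordinates if needed so that $\uline{\chi}(\bld{0}) = \bld{0}$; this is automatic here since $\uline{\psi}(\bld{0},0)=\bld{0}$. The coordinates $y^i := \chi^*$ of the standard coordinates then satisfy $\partial_{y^j} = \partial_{x^j}$ for $j<r$ and $\partial_{y^r} = X$, because $\psi$-relatedness of fields under a diffeomorphism means equality of the pushforwards.

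\textbf{Main obstacle.} I expect the delicate part to be the case $k \neq 0$, where $\R[-k]$ is purely formal. There, $\theta_X$ is only a formal flow and "$\psi$ at $t=0$ is the identity" must be read as $\psi \circ (\id \times \{0\}) = \iota$. The differential computation at the point has to be done by hand: when $k$ is odd, $t$ lies in the nilpotent part, and $X|_{\bld{0}}$ corresponds to the $t$-linear term of $\theta_X^*$. A second obstacle is making sure the flow exists, since $X$ is only given on $U$. I would apply Theorem \ref{thm_flows_summary} on $U^{(m_j)}$ directly, which is allowed because $X$ is global there, and restrict only afterwards.
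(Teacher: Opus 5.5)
Your proposal is correct and follows essentially the same proof as the paper. The paper's map $\phi$ (with $\phi^\ast(t)=x^r$, $\phi^\ast(x^r)=0$, $\phi^\ast(x^i)=x^i$ for $i\neq r$) is exactly your $\iota\times\id$ from the slice $\{x^r=0\}$. Both arguments compose it with $\theta_X$, check that $T_{(\bld{0},0)}$ sends a basis to a basis, and invert via the graded inverse function theorem. The only minor difference is how the relabeling is justified: you use the initial condition $\theta_X|_{t=0}=\id$ to get $T_{(\bld{0},0)}\theta_X(\rest{\partial_{x^\ell}}{(\bld{0},0)})=\rest{\partial_{x^\ell}}{\bld{0}}$ and then relabel with coordinate vectors, whereas the paper relabels using the submersivity of $\theta_X$ and works with the images $T_{(\bld{0},0)}\theta_X(\rest{\partial_{x^\ell}}{(\bld{0},0)})$ directly.
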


\begin{proof}
    The main idea of the proof is the same for any degree of $|X|$, but for technical reasons we will divide the proof into two cases.

    \textbf{Case 1:} $|X| = 0$. Let $\theta_X : \calD \to U^{(m_j)}$ be a flow of $X$ from Theorem \ref{thm_flows_summary}. Note that $\uline{\theta_X}$ is a flow of the underlying vector field $\uline{X}\in \scrX(U)$, see \cite{Zflows}. Also note that $\calD = (U^{(m_j)}\times \R)|_D$ where $D$ is in particular an open subset of $U \times \R$ containing $(\bld{0},0) \in U \times \R$. As a result, there exists an open interval $I \subseteq \R$ containing $0 \in \R$, such that the open cube $Q := I^{\times (m_0+1)} \subseteq \R^{m_0 + 1}$ satisfies $Q \subseteq D$. From now on we shall consider $\theta_X$ restricted to the map $\theta_X : \calD|_Q \to U^{(m_j)}$. Notice that
    \begin{equation}
        \calD|_{Q} = Q^{(\tilde{m}_j)},
    \end{equation}
    is itself a graded domain, where $\tilde{m}_j = m_j + \delta_{0j}$. In fact, we can write $Q^{(\tilde{m}_j)} = (V \times I)^{(m_j)} = V^{(m_j)}\times I$, where $V := I^{\times m_0}\subseteq U$. Let $\{x^i\}_{i = 1}^{m}$ be coordinates on $U^{(m_j)}$. Then, by slight abuse of notation, we have coordinates $\{x^{i}\}_{i= 1}^m$ and $t$ on $Q^{(\tilde{m}_j)} = V^{(m_j)}\times I$. From Theorem \ref{thm_flows_summary} we know that 
    \begin{equation}
         \partial_{t} \sim_{\theta_X}X,\qquad \text{and}\qquad \partial_{x^i} \sim_{\theta_X}\partial_{x^i},
    \end{equation}
    for all $i \in \{1, \dotsc, r-1\}$. In particular this implies $(T_{(\bld{0},0)}\theta_X)(\rest{\partial_t}{(\bld{0},0)}) = \rest{X}{\bld{0}}$ and $(T_{(\bld{0},0)}\theta_X)(\rest{\partial_{x^i}}{(\bld{0},0)}) = \rest{\partial_{x^i}}{\bld{0}}$ for all $i \in \{1, \dotsc, r-1\}$. From Theorem \ref{thm_flows_summary} we also know that $\theta_X$ is a submersion, i.e. the collection of $m + 1$ tangent vectors
    \begin{equation}
        \rest{\partial_{x^1}}{\bld{0}}, \dotsc, \rest{\partial_{x^{r-1}}}{\bld{0}}, \rest{X}{\bld{0}}, (T_{(\bld{0},0)}\theta_X)(\rest{\partial_{x^r}}{(\bld{0},0)}), \dotsc, (T_{(\bld{0},0)}\theta_X)(\rest{\partial_{x^m}}{(\bld{0},0)}),
    \end{equation}
    must span the tangent space $T_{\bld{0}}U^{(m_j)}$. This collection is linearly dependent, thus one of the vectors has to be expressible as a linear combination of previous ones, in the order as presented. By relabeling the coordinates $x^r, x^{r+1}, \dotsc, x^m$ if necessary, we conclude that the vectors
    \begin{equation}\label{eq_basis_zerodeg}
        \rest{\partial_{x^1}}{\bld{0}}, \dotsc, \rest{\partial_{x^{r-1}}}{\bld{0}}, \rest{X}{\bld{0}}, (T_{(\bld{0},0)}\theta_X)(\rest{\partial_{x^{r+1}}}{(\bld{0},0)}), \dotsc, (T_{(\bld{0},0)}\theta_X)(\rest{\partial_{x^m}}{(\bld{0},0)}),
    \end{equation}
    form a basis for $T_{\bld{0}}U^{(m_j)}$. In particular this implies $|x^r| = |X| = 0$. Next step is to define a graded smooth map $\phi : V^{(m_j)} \to Q^{(\tilde{m}_j)}$ by the pullbacks
    \begin{equation}
        \phi^\ast(t) = x^r, \qquad
        \phi^\ast(x^r) = 0, \qquad
        \phi^\ast(x^i) = x^i, \quad \text{for }i \neq r.
    \end{equation}
    Note that this is possible since $|x^r| = 0$. To write the underlying map $\uline{\phi}$, let us again distinguish between degree zero and nonzero coordinates $x^i \equiv (y^\mu, \xi^a)$. Then there exists $\nu \in \{1, \dotsc, m_0\}$ such that $y^\nu = x^r$, and we find
    \begin{equation}
        \uline{\phi}(q_1, \dotsc, q_{\nu - 1}, q_\nu, q_{\nu+1}, \dotsc, q_{m_0}) = (q_1, \dotsc, q_{\nu - 1}, 0, q_{\nu+1}, \dotsc, q_{m_0}, q_{\nu}) \in Q = V \times I,
    \end{equation}
    for any $(q_1, \dotsc, q_{m_0})\in V$. This only makes sense because $Q$ is a cube. From the definition of $\phi$ we have $\partial_{x^r} \sim_{\phi} \partial_t$ and  $\partial_{x^i} \sim_\phi \partial_{x^i}$ for any $i \neq r$. Consider the composite map $\vartheta : =\theta_X \circ \phi : V^{(m_j)} \to U^{(m_j)}$. We see that
    \begin{equation}\label{eq_switcharoo_zerodeg}
        \partial_{x^r} \sim_{\vartheta} X, \qquad \text{and} \qquad \partial_{x^i} \sim_{\vartheta}\partial_{x^i},
    \end{equation}
    for all $i \in \{1, \dotsc, r-1\}$. Furthermore, for any $i \in \{r+1, \dotsc, m\}$ there is 
    \begin{equation}
        (T_{\bld{0}}\vartheta)(\rest{\partial_{x^i}}{\bld{0}}) = (T_{(\bld{0},0)}\theta_X)(\rest{\partial_{x^i}}{(\bld{0},0)}),
    \end{equation}
    which together with (\ref{eq_switcharoo_zerodeg}) and the fact that (\ref{eq_basis_zerodeg}) is a basis means that $T_{\bld{0}}\vartheta$ is a linear bijection. In other words, $\vartheta : V^{(m_j)} \to U^{(m_j)}$ is a local diffeomorphism at $\bld{0}\in V$. Since $\uline{\vartheta}(\bld{0}) = \bld{0}$, there exist some $V^\prime \in \op_{\bld{0}} V$ and $U^\prime \in \op_{\bld{0}}(U)$ such that
    \begin{equation}
        \rest{\vartheta}{V^{\prime}} : (V^\prime)^{(m_j)} \to (U^\prime)^{(m_j)},
    \end{equation}
    is a diffeomorphism, see \cite[Theorem 4.30]{GTGM}. Its inverse $\rest{\vartheta}{V^{\prime}}^{-1} =: \chi$ is the desired coordinate map on $(U^\prime)^{(m_j)}$.

    \textbf{Case 2:} $|X| \neq 0$. Denote $k := |X|$. Again, consider the flow $\theta_X : \calD \to U^{(m_j)}$ of $X$, where $\calD = U^{(m_j)} \times \R[-k]$, as per Theorem \ref{thm_flows_summary}. Note that the underlying map is the trivial map $\uline{\theta_{X}} : M \times \{\ast\} \to M, \ (m, \ast) \mapsto m$, see \cite{Zflows} for more details. We see that $\calD$ is the graded domain $\calD = U^{(\tilde{m}_j)}$ where $\tilde{m}_j = m_j + \delta_{(-k)j}$ with coordinates $\{x^i\}_{i = 1}^m$ from $U^{(m_j)}$ and $\tau$, $|\tau| = -k$, from $\R[-k]$.

    Using the same arguments as in the previous case, we may again assume (by relabeling the coordinates $x^i$ for $i \in \{r, r+1, \dotsc, m\}$ if necessary) that $|\tau| = |x^r|$, and that the collection of tangent vectors
    \begin{equation}\label{eq_basis_nonzerodeg}
        \rest{\partial_{x^1}}{\bld{0}}, \dotsc, \rest{\partial_{x^{r-1}}}{\bld{0}}, \rest{X}{\bld{0}}, (T_{(\bld{0},0)}\theta_X)(\rest{\partial_{x^{r+1}}}{(\bld{0},0)}), \dotsc, (T_{(\bld{0},0)}\theta_X)(\rest{\partial_{x^m}}{(\bld{0},0)}),
    \end{equation}
    forms a basis for $T_{\bld{0}}U^{(m_j)}$. We may define a graded smooth map $\phi : U^{(m_j)} \to U^{(\tilde{m}_j)}$ as $\uline{\phi} = \id_U$ and 
    \begin{equation}
        \phi^\ast(\tau) = x^r, \qquad
        \phi^\ast(x^r) = 0, \qquad
        \phi^\ast(x^i) = x^i, \quad \text{for }i \neq r.
    \end{equation}
    One then proceeds to show that the composite map $\vartheta := \theta_X \circ \phi$ is a local diffeomorphism at $\bld{0} \in U$ in exactly the same manner as for the previous case. Note that now $\uline{\vartheta} = \id_U$. Thus, there exist some $V^\prime, U^\prime \subseteq U$ such that $\rest{\vartheta}{V^\prime} : (V^\prime)^{(m_j)} \to (U^{\prime})^{(m_j)}$ is a diffeomorphism, whose inverse is the desired coordinate map $\chi$. One final comment: the requirement $[X,X] = 0$ is nontrivial only for $|X|$ odd, in which case it is necessary for the existence of the flow $\theta_X$.
\end{proof}

Armed with Lemma \ref{lemma_local_frobenius} we may take on the local Frobenius theorem.

\begin{theorem}[Local Frobenius]\label{thm_local_frob}
Let $\scrD$ be an involutive distribution on $\calM$. Then around every point $p \in M$ there exist flat coordinates for $\scrD$.
\end{theorem}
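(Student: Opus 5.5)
We combine Proposition \ref{prop_commuting_frame} with an inductive application of Lemma \ref{lemma_local_frobenius}. Fix $p \in M$. Since the statement is local, we work in a graded domain $U^{(m_j)}$ around $p$ and translate so that $p$ corresponds to $\bld{0} \in U$. By Proposition \ref{prop_commuting_frame}, after shrinking $U$ we have a frame $Y_1, \dotsc, Y_r$ for $\scrD(U)$ with $[Y_i, Y_j] = 0$ for all $i,j$. In particular $[Y_i, Y_i] = 0$, so each $Y_i$ has a flow by Theorem \ref{thm_flows_summary}. By Lemma \ref{lemma_linearly_indep_vectors}, the vectors $Y_1|_{\bld{0}}, \dotsc, Y_r|_{\bld{0}}$ are linearly independent in $T_{\bld{0}}U^{(m_j)}$.

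We then prove by induction on $s \in \{0, \dotsc, r\}$ the following claim: there is a neighborhood $U_s \in \op_{\bld{0}}(U)$ with coordinates $\{x^i_{(s)}\}_{i=1}^m$ on $(U_s)^{(m_j)}$, centered at $\bld{0}$, such that $Y_j|_{U_s} = \partial_{x^j_{(s)}}$ for all $j \le s$. The case $s = 0$ is trivial. For the step $s-1 \to s$, apply Lemma \ref{lemma_local_frobenius} with $r := s$ and $X := Y_s|_{U_{s-1}}$, using the coordinates $x^i_{(s-1)}$. Its hypotheses hold for the following reasons:
\begin{enumerate}
\item $[X,X]=0$ holds by choice of the frame.
\item $[X, \partial_{x^j_{(s-1)}}] = [Y_s, Y_j]|_{U_{s-1}} = 0$ for $j < s$.
\item $\partial_{x^1_{(s-1)}}|_{\bld{0}}, \dotsc, \partial_{x^{s-1}_{(s-1)}}|_{\bld{0}}, X|_{\bld{0}}$ coincide with $Y_1|_{\bld{0}}, \dotsc, Y_s|_{\bld{0}}$, which are linearly independent.
\end{enumerate}
The lemma yields $U_s \subseteq U_{s-1}$ and new coordinates with $\partial_{x^j_{(s-1)}}|_{U_s} = \partial_{x^j_{(s)}}$ for $j<s$ and $X|_{U_s} = \partial_{x^s_{(s)}}$, which completes the step. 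There is one technical point. The lemma is formulated for a graded domain whose coordinates are the standard ones, whereas $U_{s-1}$ carries coordinates given by a diffeomorphism onto some $V^{(m_j)}$. We therefore transport $Y_s$ to $V^{(m_j)}$ by this diffeomorphism, apply the lemma there, and compose the coordinate maps. Relations like $\sim$ and commutators are preserved under diffeomorphisms, so this causes no trouble.

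At $s = r$ we obtain coordinates $\{x^i\}$ on $U_r$ with $\partial_{x^i} = Y_i|_{U_r}$ for $i \le r$. Since the restrictions $Y_i|_{U_r}$ form a frame for $\scrD(U_r)$, these are flat coordinates in the sense of the definition, possibly after reordering so that degree-zero ones come first. The relabelings inside Lemma \ref{lemma_local_frobenius} only affect indices $\ge s$, so they do not disturb the already straightened fields.

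The main analytic difficulty, straightening a single vector field while keeping previously straightened commuting ones fixed, is already handled by Lemma \ref{lemma_local_frobenius}. The remaining obstacle is bookkeeping: tracking that the hypotheses of the lemma persist after each coordinate change and shrinking. The key inputs there are that commutators and linear independence at $\bld{0}$ are coordinate-independent, and that $\uline{\chi}(\bld{0}) = \bld{0}$ keeps the base point fixed throughout.
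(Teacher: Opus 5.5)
Your proposal is correct and follows essentially the same route as the paper. You take a commuting frame from Proposition \ref{prop_commuting_frame} and repeatedly apply Lemma \ref{lemma_local_frobenius}, each time to the next frame field pushed forward by the current coordinate map. The difference is only bookkeeping: the paper inducts on the rank of $\scrD$ through the sub-distribution spanned by $X_1, \dotsc, X_{r-1}$ and then strengthens the induction hypothesis to $X_i \sim_\phi \partial_{x^i}$, whereas you fix the commuting frame once and induct on the number of straightened fields, so that strengthened hypothesis is built in from the start.
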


\begin{proof}
    Let $\scrD$ be an involutive distribution on $\calM$ and consider $p \in M$. As this is a local statement, let us assume $\calM = U^{(m_j)}$ and let $p = \bld{0}\in U$. We shall prove the theorem via induction on the total rank $r := \sum_{j \in \Z}r_j$ of the distribution $\scrD$.

    \textbf{The base case}. Let $r = 1$. Thus every point of $U$ has a neighborhood on which $\scrD$ has a local frame consisting of a single vector field. By restricting $U$ around $\bld{0}$ if necessary, let $X$ form a frame for $\scrD$. By Proposition \ref{prop_commuting_frame} we may assume $[X,X] = 0$. From Lemma \ref{lemma_local_frobenius}, for the case $r = 1$, then immediately follows the existence of flat coordinates for $\scrD$.

    \textbf{The induction step}. Let the total rank of $\scrD$ be $r > 1$ and let the Local Frobenius theorem hold for all involutive distributions of total rank $r - 1$. Let $X_1, \dotsc, X_r$ be some commuting local frame for $\scrD$ around $\bld{0}$ whose existence is ensured by Proposition \ref{prop_commuting_frame}. Restrict $U$ around $\bld{0}$ so that it becomes a global frame. Since $X_1, \dotsc, X_{r - 1}$ are linearly independent at every point of $U$, we may define a distribution $\scrD^\prime$ as their $\cifty_{(m_j)}$-span. 
    
    The distribution $\scrD^\prime$ is involutive, hence let us use the induction hypothesis (and once more restrict $U$ around $\bld{0}$ if necessary) to find coordinates on $U^{(m_j)}$ which are flat for $\scrD^\prime$. Let $\phi : U^{(m_j)} \to V^{(m_j)}$ be the corresponding coordinate map. Let us strengthen the inductive hypothesis to assume that $\bld{0} \in V$, $\uline{\phi}(\bld{0}) = \bld{0}$ and that\begin{equation}\label{eq_local_frob_related_VFs}
        X_i \sim_{\phi} \partial_{x^i},
    \end{equation}
    for every $i \in \{1, \dotsc, r-1\}$, where $\{x^i\}_{i = 1}^m$ are the standard coordinates on $V^{(m_j)}$. Indeed, for $r = 1$ case this was satisfied due to Lemma \ref{lemma_local_frobenius} and we shall show that this property carries through the induction step.

    Consider that the $r$ vector fields $\partial_{x^1}, \dotsc, \partial_{x^{r-1}}, \phi_\ast X_r$ on $V^{(m_j)}$, where $\phi_\ast X_r := (\phi^{-1})^\ast \circ X_r \circ \phi^\ast$, are linearly independent at every point of $V$ and recall that $[X_i, X_j] = 0$ for all $i,j \in \{1, \dotsc, r\}$. Then from (\ref{eq_local_frob_related_VFs}), from $X_r \sim_\phi \phi_\ast X_r$ and from the fact that $\phi$ is a diffeomorphism it follows that 
    \begin{equation}
       [\phi_\ast X_r, \phi_\ast X_r] = 0, \qquad \text{and} \qquad   [\partial_{x^i}, \phi_\ast X_r] = 0, 
    \end{equation}
    for all $i\in\{1, \dotsc, r-1\}$. It so happens that these are precisely the requirements of Lemma \ref{lemma_local_frobenius}, which gives a subset $V^\prime \in \op_{\bld{0}}(V)$ and a diffeomorphism $\chi : (V^\prime)^{(m_j)} \to  W^{(m_j)}$ for some $W \in \op_{\bld{0}}(\R^{m_0})$, such that 
    \begin{equation}
        \rest{(\phi_\ast X_r)}{V^\prime} \sim_{\chi} \partial_{y^r}, \qquad \text{and} \qquad \rest{\partial_{x^i}}{V^\prime} \sim_{\chi} \partial_{y^i},
    \end{equation}
    for all i $\in \{1, \dotsc, r-1\}$, where $\{y^i\}$ are the standard coordinates on $W^{(m_j)}$. Denote $U^\prime := \uline{\phi}^{-1}(V^\prime)$ and note that $\bld{0} \in U^\prime$. Then the coordinates introduced by the composite diffeomorphism
    \begin{equation}
        \chi \circ \rest{\phi}{U^\prime} : (U^\prime)^{(m_j)} \to W^{(m_j)},
    \end{equation}
    are precisely the flat coordinates for $\scrD$. We end the induction step by observing that $\uline{\chi} (\uline{\phi}(\bld{0})) = \bld{0}$, and that $X_i \sim_{\chi \circ \rest{\phi}{U^\prime}} \partial_{y^i}$ for all $i \in \{1, \dotsc, r\}$.
\end{proof}

\begin{remark}
    In smooth manifold theory, every rank 1 distribution is involutive. In $\Z$-graded geometry there is $[X,X] = 2X^2$ for an odd vector field $X$. A distribution with rank $(\delta_{j\ell})_{j \in \Z}$ may therefore be non-involutive when $\ell$ is odd.  A non-involutive total rank 1 distribution is shown in Example \ref{example_integrable_noninvolutive_distr}.
\end{remark}

\section{Tangent Vector Fields and Integral Submanifolds}\label{section_3}

Recall that a graded smooth map $\iota : \calR \to \calM$ is an \textbf{immersion} if $T_p\iota$ is injective for every $p \in R$. We say $\iota$ is an \textbf{embedding} if it is an immersion and $\uline{\iota}$ is an embedding. If $\uline{\iota}$ is injective and $\iota$ is an immersion or an embedding, we say that $(\calR, \iota)$ is an \textbf{immersed} or \textbf{embedded submanifold} of $\calM$.

\begin{lemma}\label{lemma_restrictively_related_VFs}
    Let $\phi : \calN \to \calM$ be a graded smooth map, $Y \in \scrX_\calN(N)$ and $X \in \scrX_\calM(M)$. Then the following are equivalent:
    \begin{enumerate}
        \item $Y \sim_\phi X$.
        \item There exists an open cover $\{V_\alpha\}_{\alpha \in I}$ of $N$ such that $\rest{Y}{V_\alpha} \sim_{\rest{\phi}{V_\alpha}} X$ for every $\alpha\in I$.
        \item $\rest{Y}{V} \sim_{\rest{\phi}{V}}\rest{X}{U}$ for every $V \in \op(N)$ and $U \in \op(M)$ such that $\uline{\phi}(V)\subseteq U$.
    \end{enumerate}
\end{lemma}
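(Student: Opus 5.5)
The plan is to establish the cycle $\mathit{1.}\implies\mathit{3.}\implies\mathit{2.}\implies\mathit{1.}$. First I fix what $Y \sim_\phi X$ means. A natural definition is $Y \circ \phi^\ast = \phi^\ast \circ X$ as maps $\cifty_\calM(M) \to \cifty_\calN(N)$. It is better to use the sheaf-level version: for every $U \in \op(M)$ and $f \in \cifty_\calM(U)$,
\begin{equation}
Y|_{\uline{\phi}^{-1}(U)}\bigl(\phi^\ast_U f\bigr) = \phi^\ast_U\bigl(X|_U f\bigr).
\end{equation}
If the paper's definition (from \cite{Zflows}) is only the global one, I will first check that it implies the sheaf-level one. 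This is the usual bump-function argument, as in Lemma \ref{lemma_local_to_global_involutivity}. Take $q \in \uline{\phi}^{-1}(U)$ and a graded bump function $\lambda$ equal to $1$ near $\uline{\phi}(q)$ with support in $U$. Then $\lambda f$ extends by zero to a global function. Restricting both sides of the global identity applied to $\lambda f$ to a neighborhood of $q$ gives the local identity, because vector fields and $\phi^\ast$ are local operators commuting with restrictions. The sheaf property of $\cifty_\calN$ then glues these local identities.

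For $\mathit{1.}\implies\mathit{3.}$, take $V \in \op(N)$ and $U \in \op(M)$ with $\uline{\phi}(V) \subseteq U$. For $f \in \cifty_\calM(U')$ with $U' \subseteq U$ open, I need
\begin{equation}
Y|_{V \cap \uline{\phi}^{-1}(U')}\bigl((\phi|_V)^\ast f\bigr) = (\phi|_V)^\ast\bigl(X|_{U'} f\bigr).
\end{equation}
Here $(\phi|_V)^\ast f$ is just the restriction of $\phi^\ast_{U'} f$ to $V \cap \uline{\phi}^{-1}(U')$. So this follows by restricting the sheaf-level identity from the previous paragraph to the smaller open set, using that $Y$ commutes with restrictions.

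The implication $\mathit{3.}\implies\mathit{2.}$ is trivial: take the cover $\{N\}$, or any cover, with $U = M$.

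For $\mathit{2.}\implies\mathit{1.}$, take $f \in \cifty_\calM(U)$. The two sections $Y(\phi^\ast f)$ and $\phi^\ast(X f)$ of $\cifty_\calN$ over $\uline{\phi}^{-1}(U)$ agree after restriction to each $V_\alpha \cap \uline{\phi}^{-1}(U)$. This is by hypothesis $\mathit{2.}$ together with the compatibility of restriction with $\phi^\ast$ and with $Y$. By the locality axiom of the sheaf $\cifty_\calN$, they are equal.

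The main obstacle is bookkeeping rather than substance. One must carefully make precise what $\phi|_V$ and $X|_U$ mean as a morphism $\calN|_V \to \calM|_U$. One must also justify passing from the global definition of relatedness to the local one. I expect the bump-function step to be the only place where a real argument, namely the existence of graded bump functions from \cite[Section 3.5]{GTGM}, is needed.
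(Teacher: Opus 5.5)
Your proof is correct and matches the paper's argument in substance. The bump-function extension of $f$ to a global function is the key step for $\mathit{1.}\implies\mathit{3.}$, and the locality axiom of $\cifty_\calN$ gives $\mathit{2.}\implies\mathit{1.}$.

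The only difference is organizational. You package the bump-function argument as a preliminary upgrade from global to sheaf-level relatedness, so that $\mathit{1.}\implies\mathit{3.}$ becomes mere restriction. The paper instead performs that argument directly inside $\mathit{1.}\implies\mathit{3.}$, comparing stalks at each point of $V$.
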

\begin{proof}
    The implications $\mathit{3.  \implies 1.}$ and $\mathit{1. \implies 2.}$ are trivial. Let us show $\mathit{1. \implies 3.}$ Take some $U \in \op(M)$ and $V \in \op(N)$ such that $\uline{\phi}(V) \subseteq U$ and consider $\rest{\phi}{V} : \rest{\calN}{V} \to \rest{\calM}{U}$. We need to show that for every $f \in \cifty_\calM(U)$ there is
    \begin{equation}
        \rest{Y}{V} \left((\rest{\phi}{V})^{\ast}(f) \right) = (\rest{\phi}{V})^\ast ( \rest{X}{U}f).
    \end{equation}
     Fix some arbitrary $p\in V$ and let $\lambda \in \cifty_\calM(M)$ be a smooth bump function on $M$ supported in $U$ such that $\rest{\lambda}{U^\prime} = 1$ for some $U^\prime \in \op_{\uline{\phi}(p)}(U)$. Let $\lambda \cdot f \in \cifty_\calM(M)$ denote the graded smooth function defined by $\parest{\lambda \cdot f}{U} = \rest{\lambda}{U} \, f$ and $\parest{\lambda \cdot f}{M \setminus \supp(\lambda)} = 0$. A direct calculation yields
    \begin{equation}
    \begin{split}
        \left[\rest{Y}{V} \left(\rest{\phi}{V}^{\ast}(f) \right)\right]_p 
        =\left[\rest{Y}{V \cap \uline{\phi}^{-1}(U')}(\rest{\phi}{V}^\ast(\rest{f}{U'})) \right]_p
        =\left[\rest{Y}{V \cap \uline{\phi}^{-1}(U')}(\rest{\phi}{V}^\ast(\parest{\lambda \cdot f}{U'})) \right]_p \\
        = \left[\rest{Y}{\uline{\phi}^{-1}(U')}(\phi^\ast(\parest{\lambda \cdot f}{U'})) \right]_p
        = \left[Y(\phi^\ast(\lambda \cdot f))\right]_p 
        = \left[\phi^\ast(X(\lambda \cdot f))\right]_p 
        = \left[(\rest{\phi}{V})^\ast(\rest{X}{U}f) \right]_p
        \end{split}
    \end{equation}
    As $p \in U$ was arbitrary, this shows the implication. Finally, $\mathit{2. \implies 1.}$ follows immediately from the fact that for any $f \in \cifty_\calM(M)$ there is
    \begin{equation}
        \parest{\phi^\ast(X(f))}{V_\alpha} = \rest{\phi}{V_\alpha}^\ast(X(f)) = \rest{Y}{V_{\alpha}}(\rest{\phi}{V_\alpha}^\ast(f)) = \parest{Y(\phi^\ast(f))}{V_\alpha},
    \end{equation}
    for any $\alpha \in I$.
\end{proof}

\begin{prop}\label{prop_injective_immersions}
    Let $(\calR, \iota)$ be an immersed submanifold of $\calM$. Then
    \begin{enumerate}
        \item The graded smooth map $\iota : \calR \to \calM$ is a monomorphism in the category of $\Z$-manifolds.
        \item For any $X \in \scrX_\calM(U)$ there can be at most one $Y \in \scrX_\calR(\uline{\iota}^{-1}(U))$ such that $Y \sim_\iota X$.
    \end{enumerate}
\end{prop}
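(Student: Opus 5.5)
Both parts rest on one local fact about immersions: around every point $p \in R$ there are coordinates on $\calR$ and on $\calM$ in which $\iota^\ast$ sends some of the target coordinates to the coordinates of $\calR$. Concretely, since $T_p\iota$ is injective, the graded inverse function theorem (\cite[Theorem 4.30]{GTGM}) gives the usual local normal form. There are $V \in \op_p(R)$, $U \in \op_{\uline{\iota}(p)}(M)$ with $\uline{\iota}(V) \subseteq U$, and coordinates $\{y^k\}$ on $\rest{\calR}{V}$ and $\{x^i\}$ on $\rest{\calM}{U}$, such that $(\rest{\iota}{V})^\ast(x^k) = y^k$ for $k \le \dim$ of $\calR$ and $(\rest{\iota}{V})^\ast(x^i)=0$ otherwise, matching degrees. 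I would check this by completing the differential $T_p\iota$ to an isomorphism using extra coordinate directions. In particular $(\rest{\iota}{V})^\ast : \cifty_\calM(U) \to \cifty_\calR(V)$ is surjective. Surjectivity holds for the generators, and it extends to formal power series because $\rest{\iota}{V}$ is, in these coordinates, the inclusion of a graded subdomain. This local surjectivity is the main technical step; everything else is formal.

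For part 1, let $\phi, \psi : \calN \to \calR$ satisfy $\iota \circ \phi = \iota \circ \psi$. The underlying maps agree, $\uline{\phi} = \uline{\psi}$, because $\uline{\iota}$ is injective. Fix $q \in N$ and take $V, U$ as above around $p = \uline{\phi}(q)$. Set $W := \uline{\phi}^{-1}(V) \in \op_q(N)$. For any $g \in \cifty_\calR(V)$, write $g = \iota^\ast f$ with $f \in \cifty_\calM(U)$, using surjectivity. Then
\begin{equation}
\phi^\ast(g) = (\iota\circ\phi)^\ast(f) = (\iota\circ\psi)^\ast(f) = \psi^\ast(g) \quad \text{on } W.
\end{equation}
The maps $\phi$ and $\psi$ are morphisms of $\Z$-manifolds, and such morphisms are determined by their restrictions to an open cover; indeed, pullbacks of sections are determined locally because the structure sheaves are sheaves. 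Hence $\phi^\ast = \psi^\ast$ as sheaf morphisms, so $\phi = \psi$.

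For part 2, suppose $Y, Y' \in \scrX_\calR(\uline{\iota}^{-1}(U))$ both satisfy $Y \sim_\iota X$ and $Y' \sim_\iota X$. Then $Z := Y - Y'$ satisfies $Z(\iota^\ast f) = 0$ for all $f \in \cifty_\calM(U)$; for inhomogeneous $Z$, argue componentwise by degree. By Lemma \ref{lemma_restrictively_related_VFs} (implication $1. \implies 3.$), the same identity holds for the restrictions to the small $V \subseteq \uline{\iota}^{-1}(U)$ and $U' \subseteq U$ from the normal form. By local surjectivity, $\rest{Z}{V}$ then annihilates all of $\cifty_\calR(V)$, in particular the coordinates $y^k$. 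Its components in the coordinate frame $\partial_{y^k}$ therefore vanish, and $\rest{Z}{V} = 0$. These $V$ cover $\uline{\iota}^{-1}(U)$ and $\scrX_\calR$ is a sheaf, so $Z = 0$.
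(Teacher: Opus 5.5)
Your proposal is correct and takes essentially the same route as the paper. Both parts reduce to local surjectivity of $(\rest{\iota}{V})^\ast : \cifty_\calM(U) \to \cifty_\calR(V)$ on arbitrarily small $V$: the paper cites this from \cite[Proposition 7.5]{GTGM} (immersions are locally embeddings), whereas you derive it from the local normal form, and otherwise the arguments are formally the same (in part 2 the paper compares the two candidate vector fields on each lifted function stalkwise rather than passing to their difference, which is immaterial). The one point to state explicitly is that your normal-form neighborhoods can be chosen inside any given open set, so they form a basis of $R$ and the sheaf argument in part 1 covers sections over arbitrary opens.
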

\begin{proof}
    Ad $\mathit{1.}$ Consider two graded smooth maps $\phi, \psi : \calN \to \calR$ such that $\iota \circ \phi = \iota \circ \psi$. We need to show that $\phi = \psi$. The injectivity of $\uline{\iota}$ implies that $\uline{\phi} = \uline{\psi}$, so we need only show that for any $V \in \op(R)$ and $f \in \cifty_\calR(V)$ there is $\phi^\ast(f) = \psi^\ast(f)$. This is equivalent to demonstrating the equality of the stalks $[\phi^\ast(f)]_p = [\psi^\ast(f)]_p$ for an arbitrary $p \in \uline{\phi}^{-1}(V) \equiv \uline{\psi}^{-1}(V)$.

    The key is \cite[Proposition 7.5]{GTGM}, which is the $\Z$-graded analog of the extension lemma for smooth functions on an embedded submanifold, e.g. \cite[Lemma 5.34]{ItSM}. According to \cite{GTGM}, there is $V^\prime \in \op_{\uline{\phi}(p)}(V)$ and some $O \in \op(M)$ such that $\uline{\iota}(V^\prime) \subseteq O$ with the property that $\rest{\iota}{V^\prime}^\ast : \cifty_\calM(O) \to \cifty_\calR(V^\prime)$ is surjective. This follows from the fact that every immersion is locally an embedding. Hence there exists some $g \in \cifty_\calM(O)$ such that
    \begin{equation}
        \rest{\iota^\ast(g)}{V^\prime} = \rest{f}{V^\prime},
    \end{equation}
    where we recall that $\rest{\iota}{V^\prime}^\ast(g) = \rest{\iota^\ast(g)}{V^\prime}$ by definition of $\rest{\iota}{V^\prime}$. Consequently,
    \begin{equation}
        \left[ \phi^\ast(f) \right]_p = \left[ \phi^\ast(\iota^\ast(g)) \right]_p = \left[ \psi^\ast(\iota^\ast(g))\right]_p = \left[ \psi^\ast(f) \right]_p,
    \end{equation}
    where we used the assumption $\iota \circ \phi = \iota \circ \psi$.
    
    Ad $\mathit{2.}$ Let $Y,Z \in \scrX_\calR(\uline{\iota}^{-1}(U))$ be two vector fields $\iota$-related to $X$. Denote $V:= \uline{\iota}^{-1}(U)$ and let $q \in V$ be arbitrary. As before, there exists some $V^\prime \in \op_q(V)$ and $O \in \op(U)$ such that $\uline{\iota}(V^\prime) \subseteq O$ and $\rest{\iota}{V^\prime}^\ast : \cifty_\calM(O) \to \cifty_\calR(V^\prime)$ is surjective. Hence for any $f \in \cifty_\calR(V)$ there exists $g \in \cifty_\calM(O)$ such that $\rest{f}{V^\prime} = \rest{\iota}{V^\prime}^\ast(g)$ and we can write
    \begin{equation}
    \begin{split}
        \left[Y(f) \right]_q &=  \left[\rest{Y}{V^\prime} (\rest{f}{V^\prime}) \right]_q =  \left[\rest{Y}{V^\prime} (\rest{\iota}{V^\prime}^\ast(g)) \right]_q = \left[\rest{\iota}{V^\prime}^\ast(\rest{X}{O}\, g) \right]_q \\ 
        &= \left[\rest{Z}{V^\prime} (\rest{\iota}{V^\prime}^\ast(g)) \right]_q = \left[Z(f)\right]_q,
    \end{split} 
    \end{equation}
    where we used Lemma \ref{lemma_restrictively_related_VFs}. As $q \in V$ was arbitrary, we have $Y = Z$.
\end{proof}

\begin{definition}\label{def_tangent_vector_field}
Let $(\calR, \iota)$ be an immersed submanifold of $\calM$ and $U \in \op(M)$. We say that a vector field $X \in \scrX_\calM(U)$ is \textbf{tangent to $\calR$} if there exists $Y \in \scrX_\calR(\uline{\iota}^{-1}(U))$ such that $Y \sim_{\iota} X$. In other words, if for all $f \in \cifty_\calM(U)$ there holds $Y\left(\iota^\ast(f)\right) = \iota^\ast\left(X(f)\right)$.
\end{definition}

\begin{remark}\label{remark_tangent_emptyset}
    What happens if $U \cap \uline{\iota}(R) = \emptyset$? Since $\scrX_\calR(\emptyset) = 0$ and $\cifty_\calR(\emptyset) = 0$, every $X \in \scrX_\calM(U)$ is tangent to $\calR$ by definition.
\end{remark}

\begin{lemma}\label{lemma_locally_tangent}
    Let $(\calR, \iota)$ be an immersed submanifold of $\calM$ and consider $X \in \scrX_\calM(U)$ for some $U \in \op(M)$. Then the following are equivalent:
    \begin{enumerate}
        \item $X$ is tangent to $\calR$.
        \item There exists an open cover $\{U_\alpha\}_{\alpha \in I}$ of $U$ such that $\rest{X}{U_\alpha}$ is tangent to $\calR$ for every $\alpha \in I$. 
        \item $\rest{X}{U^\prime}$ is tangent to $\calR$ for every $U^\prime \in \op(U)$.
    \end{enumerate}
\end{lemma}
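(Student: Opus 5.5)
I will prove the cycle $\mathit{1.} \implies \mathit{3.} \implies \mathit{2.} \implies \mathit{1.}$. The implication $\mathit{3.} \implies \mathit{2.}$ is trivial: take the cover consisting of $U$ itself.

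\textbf{Step $\mathit{1.} \implies \mathit{3.}$} Let $Y \in \scrX_\calR(\uline{\iota}^{-1}(U))$ satisfy $Y \sim_\iota X$, and take any $U^\prime \in \op(U)$. Put $V^\prime := \uline{\iota}^{-1}(U^\prime)$, so that $\uline{\iota}(V^\prime) \subseteq U^\prime$. Apply Lemma \ref{lemma_restrictively_related_VFs} (implication $\mathit{1.} \implies \mathit{3.}$) to the map $\iota$ restricted over $\uline{\iota}^{-1}(U)$. This gives $\rest{Y}{V^\prime} \sim_{\rest{\iota}{V^\prime}} \rest{X}{U^\prime}$, which is exactly the statement that $\rest{X}{U^\prime}$ is tangent to $\calR$. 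If $V^\prime = \emptyset$, the claim holds by Remark \ref{remark_tangent_emptyset}.

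\textbf{Step $\mathit{2.} \implies \mathit{1.}$} This is the substantive part. Set $V_\alpha := \uline{\iota}^{-1}(U_\alpha)$; these sets cover $V := \uline{\iota}^{-1}(U)$. For each $\alpha$ we are given $Y_\alpha \in \scrX_\calR(V_\alpha)$ with $Y_\alpha \sim_{\iota} \rest{X}{U_\alpha}$. The plan is to glue the $Y_\alpha$ using the sheaf property of $\scrX_\calR$.

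To glue, I check that the $Y_\alpha$ agree on overlaps. By the already proven $\mathit{1.} \implies \mathit{3.}$ (or directly by Lemma \ref{lemma_restrictively_related_VFs}), both $\rest{Y_\alpha}{V_\alpha \cap V_\beta}$ and $\rest{Y_\beta}{V_\alpha \cap V_\beta}$ are $\iota$-related to $\rest{X}{U_\alpha \cap U_\beta}$. Here I use that $\uline{\iota}^{-1}(U_\alpha \cap U_\beta) = V_\alpha \cap V_\beta$. Proposition \ref{prop_injective_immersions}, part $\mathit{2.}$, says such a related field is unique, so the two restrictions coincide. Note that $(\calR,\iota)$ restricted to $\uline{\iota}^{-1}(O)$ is still an immersed submanifold of $\rest{\calM}{O}$, so the proposition applies with $U$ replaced by $U_\alpha \cap U_\beta$.

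Gluing then yields $Y \in \scrX_\calR(V)$ with $\rest{Y}{V_\alpha} = Y_\alpha$. It remains to show $Y \sim_\iota X$, where $\iota$ is viewed as a map $\rest{\calR}{V} \to \rest{\calM}{U}$. This follows from Lemma \ref{lemma_restrictively_related_VFs}, implication $\mathit{2.} \implies \mathit{1.}$, applied with the cover $\{V_\alpha\}$ of $V$. To use that implication I need $\rest{Y}{V_\alpha} \sim \rest{X}{U}$ rather than $\rest{X}{U_\alpha}$. This upgrade holds because each $g \in \cifty_\calM(U)$ satisfies $(\rest{\iota}{V_\alpha})^\ast(g) = (\rest{\iota}{V_\alpha})^\ast(\rest{g}{U_\alpha})$ and $\rest{(Xg)}{U_\alpha} = \rest{X}{U_\alpha}\rest{g}{U_\alpha}$.

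The main obstacle is this overlap compatibility: it relies on the uniqueness statement of Proposition \ref{prop_injective_immersions}, which in turn needs injectivity of $\uline{\iota}$ and the immersion property. The rest is bookkeeping with restrictions, including the trivial case of empty preimages.
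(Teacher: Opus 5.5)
Your proof is correct and follows the paper's argument: the same cycle of implications, with $\mathit{1.}\implies\mathit{3.}$ from Lemma \ref{lemma_restrictively_related_VFs} (plus Remark \ref{remark_tangent_emptyset} for empty preimages), and $\mathit{2.}\implies\mathit{1.}$ by gluing the $Y_\alpha$ using the uniqueness in Proposition \ref{prop_injective_immersions}. You also spell out the final check $Y \sim_\iota X$ via Lemma \ref{lemma_restrictively_related_VFs}, $\mathit{2.}\implies\mathit{1.}$, together with the upgrade from $\rest{X}{U_\alpha}$ to $X$; the paper leaves that check to the reader.
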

\begin{proof}
    The implication $\mathit{1.} \implies \mathit{3.}$ follows from Lemma \ref{lemma_restrictively_related_VFs} whenever $U^\prime \cap \uline{\iota}(R) \neq \emptyset$. If $U^\prime \cap \uline{\iota}(R) = \emptyset$, see Remark \ref{remark_tangent_emptyset}. The implication $\mathit{3.} \implies \mathit{2}.$ is trivial. Let us show  $\mathit{2.} \implies \mathit{1.}$. By assumption, there exists $Y_\alpha \in \scrX_\calR(V_\alpha)$, where $V_\alpha := \uline{\iota}^{-1}(U_\alpha)$, such that $Y_\alpha \sim_{\iota} \rest{X}{U_\alpha}$. For any two $\alpha,\beta \in I$ denote $V_{\alpha \beta} := V_\alpha \cap V_\beta$ and $U_{\alpha \beta}$ similarly. Then by Lemma \ref{lemma_restrictively_related_VFs} both $\rest{Y_\alpha}{V_{\alpha \beta}}$ and $\rest{Y_\beta}{V_{\alpha \beta}}$ are $\iota$-related to $\rest{X}{U_{\alpha \beta}}$, hence by Proposition \ref{prop_injective_immersions} $\rest{Y_\alpha}{V_{\alpha \beta}} = \rest{Y_\beta}{V_{\alpha \beta}}$. Consequently, there exists $Y \in \scrX_\calR(V)$ such that $\rest{Y}{V_\alpha} = Y_\alpha$ for every $\alpha \in I$. The reader is left to verify that $Y \sim_\iota X$.
\end{proof}

\begin{definition}\label{def_integral_submanifold_1} \label{definition_integrable_distribution}
Let $\scrD$ be a rank $(r_{-j})$ distribution on $\calM$ and let $(\calR, \iota)$ be an immersed submanifold of $\calM$. We say that $(\calR, \iota)$ is a \textbf{fiberwise integral submanifold} for $\scrD$ if for every point $p \in R$ there is $\scrD_{\uline{\iota}(p)} = (T_p\iota)(T_p\calR)$, where $\scrD_{\uline{\iota}(p)}$ denotes the fiber of $\scrD$ at $\uline{\iota}(p)$. We say that $(\calR, \iota)$ is an \textbf{integral submanifold} for $\scrD$ if the dimension of $\calR$ is $(r_j)$ and every $X \in \scrD(U)$ is tangent to $\calR$ for every $U \in \op(M)$. We say that a distribution $\scrD$ is \textbf{integrable}, if for every point $p \in M$ there exists an integral submanifold $(\calR, \iota)$ for $\scrD$ such that $p \in \uline{\iota}(R)$.
\end{definition}

In Proposition \ref{prop_equivalently_integral}, we will see an equivalent definition using the pullback of the distribution. The reader can easily verify that every integral submanifold is fiberwise integral; however, unlike in ordinary geometry, not every fiberwise integral submanifold is integral. This is shown in Example \ref{example_strongly_and_weakly_integral_submanifolds}. 

\begin{lemma}
    \label{lemma_equivalent_strongly_integral}
Let $\scrD$ be a distribution on $\calM$ and $(\calR, \iota)$ an immersed submanifold of $\calM$. Then the following are equivalent:
\begin{enumerate}
\item For every $U \in \op(M)$, every $X \in \scrD(U)$ is tangent to $\calR$.
\item Every $X \in \scrD(M)$ is tangent to $\calR$.
\item For every point $p \in \uline{\iota}(R)$ there exists $U \in \op_p(M)$ such that every $X \in \scrD(U)$ is tangent to $\calR$.
\item For every point $p \in \uline{\iota}(R)$ there exists $U \in \op_p(M)$ together with a frame $X_1, \dotsc, X_r$ for $\scrD(U)$ such that each $X_i$ is tangent to $\calR$.
\end{enumerate}
\end{lemma}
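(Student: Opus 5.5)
The plan is to prove the cycle $\mathit{1.} \implies \mathit{2.} \implies \mathit{3.} \implies \mathit{4.} \implies \mathit{1.}$, in the same spirit as Lemma \ref{lemma_local_to_global_involutivity}. The implication $\mathit{1.} \implies \mathit{2.}$ is immediate by taking $U = M$. For $\mathit{3.} \implies \mathit{4.}$, fix $p \in \uline{\iota}(R)$ and the promised $U$, and shrink it to some $U' \in \op_p(U)$ on which $\scrD$ admits a frame $X_1, \dotsc, X_r$. Each $X_i$ then belongs to $\scrD(U')$. I would first show that every element of $\scrD(U')$ is tangent to $\calR$; once that holds, the frame itself qualifies.

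For $\mathit{2.} \implies \mathit{3.}$ I will prove the stronger statement that every $X \in \scrD(U)$, for arbitrary $U \in \op(M)$, is tangent to $\calR$. This is a bump-function argument. For each $q \in U$, choose $W_q \in \op_q(U)$ with $\overline{W_q} \subseteq U$ and a bump $\lambda$ supported in $U$ with $\rest{\lambda}{W_q} = 1$. Extend $\lambda X$ by zero to a global section $\tilde{X} \in \scrD(M)$, which is possible since $\scrD$ is a sheaf of $\cifty_\calM$-modules. By assumption $\tilde X$ is tangent to $\calR$, so by Lemma \ref{lemma_locally_tangent} ($\mathit{1.}\implies\mathit{3.}$) its restriction $\rest{\tilde X}{W_q} = \rest{X}{W_q}$ is tangent as well. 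The sets $\{W_q\}_{q\in U}$ cover $U$, so Lemma \ref{lemma_locally_tangent} ($\mathit{2.}\implies\mathit{1.}$) gives that $X$ is tangent. The same argument also justifies the shrinking step in $\mathit{3.}\implies\mathit{4.}$: since restrictions of tangent fields are tangent by Lemma \ref{lemma_locally_tangent}, every $\scrD(U')$ consists of tangent fields.

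The main step is $\mathit{4.} \implies \mathit{1.}$. Take $U \in \op(M)$ and $X \in \scrD(U)$. Cover $U$ as follows. For each $p \in U \cap \uline{\iota}(R)$, take the neighbourhood $U_p$ and frame $X_1,\dots,X_r$ from $\mathit{4.}$ and set $W_p := U \cap U_p$. In addition, take $W_0 := U \setminus \uline{\iota}(R)$ if that set is open. Since $\uline{\iota}(R)$ need not be closed for an immersion, this requires care, so instead I cover the remaining points $q \in U\setminus\uline{\iota}(R)$ by any neighbourhoods that work. On $W_p$ we can write $\rest{X}{W_p} = \sum_i f^i \rest{X_i}{W_p}$. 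The restrictions $\rest{X_i}{W_p}$ are tangent by Lemma \ref{lemma_locally_tangent}, with $Y_i \sim_\iota \rest{X_i}{W_p}$. Then $\sum_i \iota^\ast(f^i)\, Y_i$ is $\iota$-related to $\rest{X}{W_p}$, by a direct check of the sign conventions: $\iota^\ast$ is a morphism of algebras, so $\iota^\ast(f^i X_i(g)) = \iota^\ast(f^i)\,Y_i(\iota^\ast g)$. Hence $\rest{X}{W_p}$ is tangent.

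The remaining obstacle is points $q \in U$ outside $\uline{\iota}(R)$. If $q \notin \overline{\uline{\iota}(R)}$, a neighbourhood disjoint from $\uline{\iota}(R)$ works by Remark \ref{remark_tangent_emptyset}. In general, I would avoid covering $U$ entirely and instead build the field $Y$ on $\uline{\iota}^{-1}(U)$ directly. The sets $V_p := \uline{\iota}^{-1}(W_p)$ cover $\uline{\iota}^{-1}(U)$. The local $\iota$-related fields agree on overlaps by Proposition \ref{prop_injective_immersions}, so they glue to $Y$. To check $Y \sim_\iota X$ I would use Lemma \ref{lemma_restrictively_related_VFs} ($\mathit{2.}\implies\mathit{1.}$), applied with the open cover $\{V_p\}$ of $R$ and relatedness of $\rest{Y}{V_p}$ to $\rest{X}{W_p}$. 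Making this step rigorous is the part needing most care, since that lemma is stated for $X$ on all of $U$ rather than its restrictions, so relatedness to $\rest{X}{W_p}$ must be converted into relatedness to $X$ via stalks.
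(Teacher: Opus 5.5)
Your proof is correct, but it distributes the work differently from the paper.

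The paper proves $\mathit{3.}\implies\mathit{1.}$ directly. It runs the bump-function argument only at points $p\in\uline{\iota}(R)\cap U$. It then disposes of $U\setminus\uline{\iota}(R)$ by observing that $U':=\bigcup_p U_p$ satisfies $\uline{\iota}^{-1}(U')=\uline{\iota}^{-1}(U)$, so a field related to $\rest{X}{U'}$ is already related to $X$. Condition $\mathit{4.}$ is attached separately through the rule $Y+\iota^\ast(f)\tilde{Y}\sim_\iota X+f\tilde{X}$.

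You use bump functions only under global or relative hypotheses: in $\mathit{2.}\implies\mathit{1.}$ and in the shrinking step of $\mathit{3.}\implies\mathit{4.}$. There every point of $U$ gets a neighbourhood, and nothing special happens off $\uline{\iota}(R)$. You then move the preimage-cover gluing into $\mathit{4.}\implies\mathit{1.}$, combined with the linear-combination rule.

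One thing your arrangement buys is a properly justified shrinking step. The paper's appeal to Lemma \ref{lemma_locally_tangent} for ``we may assume $V\subseteq U$'' only handles restrictions of elements of $\scrD(V)$. Your relative bump argument covers all of $\scrD(U')$.

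The step you flag as delicate is in fact immediate and needs no stalks. For $g\in\cifty_\calM(U)$, naturality of $\iota^\ast$ and locality of vector fields give $\rest{Y(\iota^\ast g)}{V_p}=Y_p\big((\rest{\iota}{V_p})^\ast(\rest{g}{W_p})\big)=(\rest{\iota}{V_p})^\ast\big(\rest{(Xg)}{W_p}\big)=\rest{\iota^\ast(Xg)}{V_p}$. The $V_p$ cover $\uline{\iota}^{-1}(U)$ (not $R$), so $Y\sim_\iota X$ by the sheaf property of $\cifty_\calR$. This is the same observation the paper uses in its final display.

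In a write-up, drop the abandoned attempts to cover $U\setminus\uline{\iota}(R)$ separately; they are unnecessary.
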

\begin{proof}
    The implications $\mathit{1.} \implies \mathit{2.} \implies \mathit{3.}$ are trivial. To show the implication $\mathit{3.} \implies \mathit{4.}$ consider some $p \in M$ and a frame $X_1, \dotsc, X_r$ for $\scrD$ on some $U \in \op_p(M)$. From point $\mathit{3.}$ take some $V \in \op_p(M)$ such that every $X \in \scrD(V)$ is tangent to $\calR$. By Lemma \ref{lemma_locally_tangent}, we may assume $V \subseteq U$. Thus, simply restrict the frame vector fields $X_1, \dotsc, X_r$ to $V$. The implication $\mathit{4.} \implies \mathit{3.}$ follows immediately from the fact that if $Y \sim_\iota X$ and $\tilde{Y} \sim_\iota \tilde{X}$, then
    \begin{equation}\label{eq_cifty_comb_related_VFs}
     Y + \iota^\ast(f) \,\tilde{Y}   \sim_\iota X + f\tilde{X}.
    \end{equation}
    Finally, let us show the implication $\mathit{3.} \implies \mathit{1.}$ Consider some $U \in \op(M)$ and $X \in \scrD(U)$. Without loss of generality we may assume that $\uline{\iota}(R) \cap U \neq \emptyset$, see Remark \ref{remark_tangent_emptyset}. Fix some arbitrary $p \in \uline{\iota}(R) \cap U$. Extend $X$ to $\tilde{X}\in \scrX_\calM(M)$ using a smooth bump function $\lambda \in \cifty_\calM(M)$ which is supported in $U$ and $\rest{\lambda}{W} = 1$ on some $W \in \op_p(U)$. Explicitly $\tilde{X}|_{U} = \rest{\lambda}{U} \,X$ and $\tilde{X}|_{M \setminus \supp(\lambda)} = 0$. Next, by assumption there exists some $V \in \op_p(M)$ such that every vector field in $\scrD(V)$ is tangent to $\calR$. In particular, $\tilde{X}|_{V}$ is tangent to $\calR$ and by Lemma \ref{lemma_locally_tangent} so is $\tilde{X}|_{V \cap W} = X|_{V \cap W}$. As $p$ was arbitrary, we know that for every $p \in \uline{\iota}(R) \cap U$ there exists some $U_p \in \op_p(U)$ such that $X|_{U_p}$ is tangent to $\calR$. Denote $U^\prime := \cup_{p}U_p$. From Lemma \ref{lemma_locally_tangent} it follows that $\rest{X}{U^\prime}$ is tangent to $\calR$.

    Note that $U^\prime \supseteq \uline{\iota}(R)\cap U$, hence $\uline{\iota}^{-1}(U^\prime) = \uline{\iota}^{-1}(U)$. Consequently, for any $g \in \cifty_\calM(U)$ there is $\iota^\ast(g) = \iota^\ast(\rest{g}{U^\prime})$. Since we know there exists $Y \in \scrX_\calR(\uline{\iota}^{-1}(U))$ such that $Y \sim_\iota \rest{X}{U^\prime}$, we can write
    \begin{equation}
        Y (\iota^\ast(f)) = Y(\iota^\ast(\rest{f}{U^\prime})) = \iota^\ast (\rest{X}{U^\prime} (\rest{f}{U^\prime})) = \iota^\ast(\parest{Xf}{U^\prime}) = \iota^\ast(Xf),
    \end{equation}
    for any $f\in\cifty_\calM(U)$. In other words, $Y \sim_\iota X$.
\end{proof}

\begin{lemma}\label{lemma_locally_strongly_integral}
Let $\scrD$ be a distribution on $\calM$, let $(\calR, \iota)$ be an immersed submanifold of $\calM$ and let there exist an open cover $\{R_\alpha\}_{\alpha \in I}$ of $R$ such that every $(\calR|_{R_\alpha}, \iota|_{R_\alpha})$ is an integral submanifold for $\scrD$. Then $(\calR, \iota)$ is an integral submanifold for $\scrD$.
\end{lemma}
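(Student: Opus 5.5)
Two things must be checked: that $\calR$ has graded dimension $(r_j)$, and that every $X \in \scrD(U)$ is tangent to $\calR$ for every $U \in \op(M)$. The dimension claim is immediate. The graded dimension of $\calR$ is read off locally, and each $\calR|_{R_\alpha}$ has dimension $(r_j)$ by assumption. (We may discard empty $R_\alpha$, and if $R = \emptyset$ there is nothing to prove.)

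For tangency we use the equivalence $\mathit{1.} \iff \mathit{2.}$ of Lemma \ref{lemma_equivalent_strongly_integral}, so it suffices to take $X \in \scrD(M)$ and produce $Y \in \scrX_\calR(R)$ with $Y \sim_\iota X$. Since $(\calR|_{R_\alpha}, \iota|_{R_\alpha})$ is integral, $X$ is tangent to it. Hence there exists $Y_\alpha \in \scrX_\calR(R_\alpha)$ with $Y_\alpha \sim_{\iota|_{R_\alpha}} X$; here $\uline{\iota|_{R_\alpha}}^{-1}(M) = R_\alpha$.

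Next we glue the $Y_\alpha$. Set $R_{\alpha\beta} := R_\alpha \cap R_\beta$. By Lemma \ref{lemma_restrictively_related_VFs} (implication $\mathit{1.}\implies\mathit{3.}$, applied to $\iota|_{R_\alpha}$ with $V = R_{\alpha\beta}$ and $U = M$), both $Y_\alpha|_{R_{\alpha\beta}}$ and $Y_\beta|_{R_{\alpha\beta}}$ are $\iota|_{R_{\alpha\beta}}$-related to $X$. The restriction $(\calR|_{R_{\alpha\beta}}, \iota|_{R_{\alpha\beta}})$ is still an immersed submanifold, since $\uline{\iota}$ stays injective and the tangent maps stay injective. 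Part $\mathit{2.}$ of Proposition \ref{prop_injective_immersions} therefore gives $Y_\alpha|_{R_{\alpha\beta}} = Y_\beta|_{R_{\alpha\beta}}$. As $\scrX_\calR$ is a sheaf, there is a unique $Y \in \scrX_\calR(R)$ with $Y|_{R_\alpha} = Y_\alpha$.

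Finally, $Y \sim_\iota X$ follows from Lemma \ref{lemma_restrictively_related_VFs}, implication $\mathit{2.}\implies\mathit{1.}$, applied to the cover $\{R_\alpha\}$ of $R$ and $\phi = \iota$. So every global section of $\scrD$ is tangent to $\calR$, and Lemma \ref{lemma_equivalent_strongly_integral} upgrades this to every $X \in \scrD(U)$ and every $U$.

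The only step that needs any care is the gluing: we need uniqueness of related vector fields on the overlaps. That is exactly where injectivity of the immersion, through Proposition \ref{prop_injective_immersions}, enters. Everything else is bookkeeping of restrictions via Lemma \ref{lemma_restrictively_related_VFs}.
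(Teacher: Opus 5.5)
Your proof is correct and follows the paper's argument essentially step for step. You reduce to $X \in \scrD(M)$ via Lemma \ref{lemma_equivalent_strongly_integral}, obtain the local $Y_\alpha$, show they agree on overlaps using Lemma \ref{lemma_restrictively_related_VFs} together with the uniqueness part of Proposition \ref{prop_injective_immersions}, glue them, and conclude $Y \sim_\iota X$ by Lemma \ref{lemma_restrictively_related_VFs} again. You also explicitly verify the graded-dimension requirement and that restrictions of immersed submanifolds remain immersed; the paper leaves both of these implicit.
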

\begin{proof}
    Fix a vector field $X \in \scrD(M)$. By Lemma \ref{lemma_equivalent_strongly_integral} it is sufficient to show that $X$ is tangent to $\calR$. By assumption, for every $\alpha \in I$ there exists $Y_\alpha \in \scrX_{\calR}(R_\alpha)$  such that $Y_\alpha \sim_{\rest{\iota}{R_\alpha}} X$. By Lemma \ref{lemma_restrictively_related_VFs} this implies that $\rest{Y_\alpha}{R_{\alpha\beta}}\sim_{\rest{\iota}{R_{\alpha\beta}}} X$ for any $\alpha, \beta \in I$, where $R_{\alpha\beta} = R_\alpha \cap R_\beta$. It follows from Proposition \ref{prop_injective_immersions} that $\rest{Y_{\alpha}}{R_{\alpha\beta}} = \rest{Y_\beta}{R_{\alpha\beta}}$, hence the collection $\{Y_\alpha\}_{\alpha \in I}$ glues together a vector field $Y \in \scrX_\calR(R)$ such that $\rest{Y}{R_\alpha} = Y_\alpha \sim_{\rest{\phi}{R_\alpha}}X$. Using Lemma \ref{lemma_restrictively_related_VFs} one more time, one obtains that $Y \sim_{\iota} X$.
\end{proof}

In our examples we will use the following proposition from \cite{GTGM}, which gives a simple criterion of whether a vector field is tangent to a given embedded submanifold.

\begin{prop}[{\cite[Proposition 7.10.]{GTGM}}]\label{prop_embedded_tangent_criterion}
Let $(\calR, \iota)$ be an embedded submanifold of $\calM$, $U \in \op(M)$ and $X \in \scrX_\calM(U)$. Then $X$ is tangent to $\calR$ if and only if for every $f \in \cifty_\calM(U)$ there holds the implication
\begin{equation}\label{eq_impl1}
\iota^\ast(f) = 0 \implies \iota^\ast \left( Xf \right) = 0.
\end{equation}
In other words, if and only if $X$ preserves the kernel of $\iota^\ast$.
\end{prop}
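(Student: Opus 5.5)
The statement is \cite[Proposition 7.10]{GTGM}. I would prove the two implications separately, reducing both to local computations in adapted coordinates. The tools are the locality results Lemma \ref{lemma_locally_tangent} and Lemma \ref{lemma_restrictively_related_VFs}, together with the local surjectivity of $\iota^\ast$ for embeddings used in the proof of Proposition \ref{prop_injective_immersions}.

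\emph{Necessity} is immediate. If $Y \sim_\iota X$ and $\iota^\ast(f) = 0$, then $\iota^\ast(Xf) = Y(\iota^\ast f) = Y(0) = 0$.

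\emph{Sufficiency.} Assume $X$ preserves $\ker \iota^\ast$. The idea is to define $Y$ by the formula $Y(\iota^\ast g) := \iota^\ast(Xg)$. I would first do this locally and then glue.

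Fix $q \in \uline{\iota}^{-1}(U)$. Since $\iota$ is an embedding, there are $V \in \op_q(R)$ and $O \in \op(U)$ with $\uline{\iota}(V) = \uline{\iota}(R) \cap O$, such that $\rest{\iota}{V}^\ast : \cifty_\calM(O) \to \cifty_\calR(V)$ is surjective. One would like even more: that $V$ is the full preimage $\uline{\iota}^{-1}(O)$. This holds because $\uline{\iota}$ is a topological embedding, so any open $V$ is of the form $\uline{\iota}^{-1}(O')$ for some open $O'$; shrink $O$ to $O \cap O'$.

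Define $Y_V(\rest{\iota}{V}^\ast g) := \rest{\iota}{V}^\ast(\rest{X}{O} g)$ for $g \in \cifty_\calM(O)$. For this to be well defined, I must check that $\rest{X}{O}$ also preserves the kernel of $\rest{\iota}{V}^\ast$, not just $X$ on $U$. Given $g \in \cifty_\calM(O)$ with $\rest{\iota}{V}^\ast g = 0$, I would proceed stalkwise.
\begin{itemize}
\item Multiply $g$ by a bump function $\lambda$ supported in $O$ and equal to $1$ near a chosen point, and extend by zero to $\lambda g \in \cifty_\calM(U)$.
\item Then $\iota^\ast(\lambda g) = 0$. It vanishes on $V$ because $\iota^\ast g = 0$ there, and it vanishes off $\supp \lambda$ because $\lambda g$ does.
\item By hypothesis $\iota^\ast(X(\lambda g)) = 0$. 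Near the chosen point $X(\lambda g)$ agrees with $Xg$, so $\iota^\ast(Xg)$ vanishes stalkwise, hence vanishes.
\end{itemize}
Once $Y_V$ is well defined, it is a graded derivation of the correct degree, because $\iota^\ast$ is a surjective morphism of graded algebras and $X$ is a derivation. By construction $Y_V \sim_{\iota|_V} \rest{X}{O}$, so $\rest{X}{O}$ is tangent to $\calR$.

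To finish, cover $\uline{\iota}(R) \cap U$ by such sets $O$, and add $U \setminus \uline{\iota}(R)$. This last set is open because an embedded submanifold here is locally closed; alternatively, handle it using Remark \ref{remark_tangent_emptyset}. Lemma \ref{lemma_locally_tangent} then yields that $X$ is tangent to $\calR$. The gluing of the local $Y_V$ is built into that lemma via Proposition \ref{prop_injective_immersions}.

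\emph{Main obstacle.} The delicate step is the well-definedness of $Y_V$: passing from the global kernel condition on $U$ to the local one on $O$ via the bump-function argument. This needs the surjectivity statement \cite[Proposition 7.5]{GTGM} and the fact that $V = \uline{\iota}^{-1}(O)$, which is exactly where embeddedness (not mere immersion) is used.
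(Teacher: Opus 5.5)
Your necessity argument and the local part of sufficiency are correct. Choosing $O$ so that $V=\uline{\iota}^{-1}(O)$, the bump-function argument that $\rest{X}{O}$ preserves $\ker \rest{\iota}{V}^\ast$, and the derivation property of $Y_V$ all go through, and you correctly locate where embeddedness is used. (The paper only quotes this result from \cite{GTGM}, so there is no in-text proof to compare with; I judge your argument on its own.)

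The step that fails is the final gluing. The set $U \setminus \uline{\iota}(R)$ need not be open. Local closedness only says that $\uline{\iota}(R)$ is open in its closure, not that it is closed in $U$. For instance, take the open segment $(0,1)\times\{0\}$ embedded in $U=\R^2$. The point $(0,0)$ lies in $U\setminus\uline{\iota}(R)$, but every neighborhood of it meets the segment. Points of $U\cap\overline{\uline{\iota}(R)}\setminus\uline{\iota}(R)$ lie neither in any of your sets $O$ nor in any open set disjoint from $\uline{\iota}(R)$. So you do not obtain an open cover of $U$, and Lemma \ref{lemma_locally_tangent} cannot be applied. Remark \ref{remark_tangent_emptyset} does not help either, since such points have no neighborhood disjoint from the image.

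The repair is easy: glue on the side of $\calR$ instead of $\calM$.
\begin{enumerate}
\item The sets $V=\uline{\iota}^{-1}(O)$ do cover $\uline{\iota}^{-1}(U)$.
\item On an overlap, $Y_V$ and $Y_{V'}$ are both related to $\rest{X}{O\cap O'}$ by Lemma \ref{lemma_restrictively_related_VFs}, so they agree there by Proposition \ref{prop_injective_immersions}.
\item The glued field $Y\in\scrX_\calR(\uline{\iota}^{-1}(U))$ satisfies $\rest{Y(\iota^\ast f)}{V}=\rest{\iota}{V}^\ast(\rest{X}{O}\,\rest{f}{O})=\rest{\iota^\ast(Xf)}{V}$ for every $f\in\cifty_\calM(U)$. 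Hence $Y\sim_\iota X$.
\end{enumerate}
Equivalently, apply Lemma \ref{lemma_locally_tangent} to $U':=\bigcup O$ to get that $\rest{X}{U'}$ is tangent to $\calR$. Then use $\uline{\iota}^{-1}(U')=\uline{\iota}^{-1}(U)$, exactly as in the last paragraph of the proof of Lemma \ref{lemma_equivalent_strongly_integral}.
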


There is another, perhaps more natural viewpoint on integral and fiberwise integral submanifolds obtained by comparing the tangent bundle of the submanifold to the pullback of the distribution. Let us recall the relevant theory of pullback vector bundles. First, let us set the notation with a diagram.
    \begin{equation}\label{diag_pullback_tangent_bundle}
        \begin{tikzcd}
            {T\calR}
            \arrow[d, "{}"]
            \arrow[rrd, "{T\varphi}", bend left]
            \arrow[rd, "{\hat{T}\varphi}", dashed]
                &{}
                    &{} \\
            {\calR}
            \arrow[rd, bend right, "{\id}"]
                &{\varphi^{\star} T\calM}
                \arrow[r, "{\hat{\varphi}}"]
                \arrow[d, "{}"]
                    &{T \calM}
                    \arrow[d]\\
            {}
                &{\calR}
                \arrow[r, "{\varphi}"]
                    &{\calM}
        \end{tikzcd}.
    \end{equation}
    The graded vector bundle (GVB) morphism $\hat{T}\varphi$ over the identity $\calR \to \calR$ is induced by the universal property of the pullback bundle $\varphi^{\star} T\calM$, see e.g. \cite[Example 5.5]{3GVB}. The fact that $\hat{T}\varphi$ is a GVB morphism over the identity lets us view it as a morphism of sheaves of $\cifty_\calR$-modules $\hat{T}\varphi : \scrX_\calR \to \varphi^{\star} \scrX_\calM$, hence it makes sense to write $(\hat{T}\varphi)(Y)$ for any $Y \in \scrX_\calR(U)$ and any $U \in \op(R)$.
    Note that we use the subtly different $\varphi^\star T\calM$ from the more common $\varphi^\ast T \calM$ to avoid confusion with the pullback by $\varphi$, i.e. the sheaf morphism $\varphi^\ast : \cifty_\calM \to \uline{\varphi}_\ast \cifty_\calR$.

    Next, let us recall how one obtains the pullback section $\varphi^\star X \in (\varphi^\star \scrX_\calM)(\uline{\varphi}^{-1}(V)) \equiv (\varphi_\ast \varphi^\star\scrX_\calM)(V)$ for any $X \in \scrX_\calM(V)$. Here, for any sheaf of $\cifty_\calR$-modules $\scrF$ we denote as $\varphi_\ast \scrF$ the sheaf of $\cifty_\calM$-modules given by $(\varphi_\ast \scrF)(V) := (\uline{\varphi}_\ast\scrF)(V) \equiv \scrF(\uline{\varphi}^{-1}(V))$ and $f \cdot s := \varphi^\ast(f)\, s$ for any $V \in \op(M)$, $s \in (\varphi_\ast \scrF)(V)$ and $f \in \cifty_\calM(V)$. It is a very useful fact that $\varphi_\ast$ and $\varphi^\star$ can be seen as functors
    \begin{equation}
    \begin{tikzcd}
        \Mod_{\cifty_\calR}
        \arrow[r, yshift = 3pt, "{\varphi_\ast}"]
        \arrow[r, leftarrow, yshift = -3pt, "{\varphi^\star}"']
        & \Mod_{\cifty_\calM}
    \end{tikzcd},
    \end{equation}
    between the categories of sheaves of $\cifty_\calR$- and $\cifty_\calM$- modules. What is more, $\varphi^\star$ is a left-adjoint of $\varphi_\ast$. In other words, there is a natural isomorphism
    \begin{equation}
        \Mod_{\cifty_\calR}(\varphi^\star -, -) \cong \Mod_{\cifty_\calM}(-, \varphi_\ast -),
    \end{equation}
    see e.g. \cite[Chapter II, Section 5]{Hartshorne1977}. Consider the unit of the adjunction $\eta$, which is a natural transformation $\eta : \id \to \varphi_\ast \varphi^\star $. For the concrete object $\scrX_\calM \in \Mod_{\cifty_\calM}$, it gives a morphism of sheaves of $\cifty_\calM$-modules
    \begin{equation}
        \eta_{\scrX_\calM} : \scrX_\calM \to \varphi_\ast \varphi^\star \scrX_\calM.
    \end{equation}
    For any $V \in \op(M)$  and $X \in \scrX_\calM(V)$, we obtain the pullback section as
    \begin{equation}
        \varphi^\star X \equiv X^\star := \eta_{\scrX_\calM}(X).
    \end{equation}
    Also note  that there is a canonical isomorphism $\varphi^\star(T^\ast \calM)\cong (\varphi^\star T \calM)^\ast$.  In fact, let $\alpha^i$ be a dual section to $\partial_{x^i}$, i.e. $\alpha^{i}(\partial_{x^j}) = {\delta^i}_j$. In other words $\alpha^i = (-1)^{|x^i|}\, \mathrm{d}x^i$ (no sum). Then $\varphi^{\star}(\alpha^i)$ is the dual section to $\varphi^\star(\partial_{x^i})$, see \cite[(607)]{GTGM}. Furthermore, the induced GVB morphism $\hat{\varphi}^\ast : \scrX_\calM^\ast \to \varphi_\ast ((\varphi^\star\scrX_\calM)^\ast)$ is also the unit of the adjunction, just valued at the object $\Omega^1_\calM \cong \scrX_\calM^\ast$, i.e.
    \begin{equation}
        \hat{\varphi}^\ast \cong \eta_{\Omega^1_\calM} : \Omega^1_\calM \to \varphi_\ast \varphi^\star \Omega^1_\calM.
    \end{equation}
Consequently, $\varphi^{\star}(\alpha^i) = \hat{\varphi}^\ast(\alpha^i)$. The next lemma tells us how to express $(\hat{T}\varphi)(Y)$ in the pullback frame.

\begin{lemma}\label{lemma_locally_pullback_section}
Let $\calR$ and $\calM$ be $\Z$-manifolds, $\varphi : \calR \to \calM$ a graded smooth map and consider some $Y \in \scrX_\calR(R)$. Let $\{y^a\}_{a = 1}^r$ be coordinates on some $U \in \op(R)$. Furthermore, let $\{x^i\}_{i = 1}^m$ be coordinates on some $V \in \op(M)$ such that $\uline{\varphi}^{-1}(V) \cap U \neq \emptyset$. Denote $U' := \uline{\varphi}^{-1}(V) \cap U$ and $Y|_{U'} = Y^a \, \partial_{y^a}$. Then 
\begin{equation}
(\hat{T}\varphi)(Y|_{U'}) =  Y^a  \,  \dd{(\varphi|_{U'}^\ast x^i)}{y^a} \, \partial_{x^i}^\star|_{U'},
\end{equation}
where $\partial_{x^i}^\star$ denotes the pullback section of $\partial_{x^i}$.
\end{lemma}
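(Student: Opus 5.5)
Since $\hat{T}\varphi$ is a morphism of sheaves of $\cifty_\calR$-modules, both sides are compatible with restriction to open subsets. The plan is therefore to work entirely on $U'$, where $\{y^a\}$ are coordinates on $\calR$ and $\varphi|_{U'}$ maps into $\calM|_V$ with coordinates $\{x^i\}$. By $\cifty_\calR$-linearity, $(\hat{T}\varphi)(Y|_{U'}) = Y^a\,(\hat{T}\varphi)(\partial_{y^a})$. It then suffices to prove
\begin{equation}
(\hat{T}\varphi)(\partial_{y^a}) = \dd{(\varphi|_{U'}^\ast x^i)}{y^a}\, \partial_{x^i}^\star|_{U'}
\end{equation}
for each $a$.

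We use the fact that $\{\partial_{x^i}^\star|_{U'}\}$ is a local frame for $\varphi^\star\scrX_\calM$ over $U'$, which is the pullback of a frame. The dual sections $\varphi^\star(\alpha^i)$, recalled just before the lemma, satisfy $\varphi^\star(\alpha^i)(\partial_{x^j}^\star) = {\delta^i}_j$. Any section $s$ of $\varphi^\star\scrX_\calM$ over $U'$ can thus be written as $s = \varphi^\star(\alpha^i)(s)\,\partial_{x^i}^\star$, up to a sign convention for evaluating a form on a section. The component of $(\hat{T}\varphi)(\partial_{y^a})$ is then
\begin{equation}
\varphi^\star(\alpha^i)\big((\hat{T}\varphi)(\partial_{y^a})\big) = \hat{\varphi}^\ast(\alpha^i)\big((\hat{T}\varphi)(\partial_{y^a})\big),
\end{equation}
using $\varphi^\star(\alpha^i) = \hat{\varphi}^\ast(\alpha^i)$. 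By the defining property of $\hat{T}\varphi$ through the universal property of the pullback, namely $\hat{\varphi}\circ\hat{T}\varphi = T\varphi$, this pairing should equal the pairing of $\alpha^i$ with $T\varphi(\partial_{y^a})$. In module language, it is the function $(T\varphi)^\ast(\alpha^i)(\partial_{y^a})$. Since $\alpha^i = (-1)^{|x^i|}\mathrm{d}x^i$, and $\varphi^\ast$ commutes with $\mathrm{d}$, we get $(T\varphi)^\ast(\mathrm{d}x^i) = \mathrm{d}(\varphi^\ast x^i)$. Evaluating on $\partial_{y^a}$ gives $\partial_{y^a}(\varphi|_{U'}^\ast x^i)$ up to the sign $(-1)^{|x^i|}$. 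This sign cancels against the one in $\alpha^i$, because $\mathrm{d}f(X) = (-1)^{|X||f|}X(f)$ or a similar convention from \cite{GTGM}, and $|\partial_{y^a}|$ together with $|x^i|$ enter consistently.

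The main obstacle is making precise the identity "pairing of $\hat{\varphi}^\ast\alpha$ with $\hat{T}\varphi(Y)$ equals $\varphi^\ast$-pulled pairing of $\alpha$ with $T\varphi(Y)$", i.e.\ that $\langle \eta_{\Omega^1}(\alpha), \hat{T}\varphi(Y)\rangle = \langle \mathrm{d}\varphi\text{-pullback of }\alpha, Y\rangle$. We would attempt it directly in the frame. Here $T\varphi$ on sections is the derivation $f\mapsto Y(\varphi^\ast f)$ along $\varphi$, so $\hat{T}\varphi(Y)$ is the unique section whose pairing with $\varphi^\star(\mathrm{d}f)$ is $Y(\varphi^\ast f)$. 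We would check this uniqueness and identity on the coordinate functions $f = x^i$ using \cite[(607)]{GTGM}, and then track the sign conventions carefully. Everything else is routine $\cifty_\calR$-linearity.
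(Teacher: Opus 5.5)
Your route is the paper's. You reduce to $\partial_{y^a}$ by $\cifty_\calR$-linearity and read off components by pairing with the dual frame $\varphi^\star(\alpha^i)=\hat{\varphi}^\ast(\alpha^i)$. You then replace $(\hat{T}\varphi)^{\rmT}\circ\hat{\varphi}^\ast$ by $(T\varphi)^\ast$ and expand $(T\varphi)^\ast(\mathrm{d}x^i)=\mathrm{d}(\varphi^\ast x^i)$ in the coordinates $y^b$.

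The step you single out as the main obstacle needs no frame computation. Checking it ``directly in the frame'' would essentially restate the lemma. It is just the transpose of the commuting triangle $\hat{\varphi}\circ\hat{T}\varphi=T\varphi$, i.e.\ $(\hat{T}\varphi)^{\rmT}\circ\hat{\varphi}^\ast=(T\varphi)^\ast$, which is exactly what the paper invokes.

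The one thing to repair is the sign bookkeeping, which is the only non-formal content of the lemma, and your stated cancellation is not right. Use the convention $\mathrm{d}f(X)=(-1)^{|X||f|}X(f)$, which is the one compatible with $\alpha^i=(-1)^{|x^i|}\mathrm{d}x^i$ being dual to $\partial_{x^i}$. With it one gets $(T\varphi)^\ast(\alpha^i)(\partial_{y^a})=(-1)^{|x^i|+|x^i||y^a|}\,\dd{(\varphi|_{U'}^\ast x^i)}{y^a}$. So the factor $(-1)^{|x^i|}$ does not simply cancel on its own.

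Instead, the whole sign is matched by the component-extraction sign you left unspecified. For $s=s^j\,\partial^\star_{x^j}$ of degree $-|y^a|$ one has $\varphi^\star(\alpha^i)(s)=(-1)^{|x^i|(|x^i|-|y^a|)}s^i$. The two signs agree because $|x^i|(|x^i|-|y^a|)\equiv|x^i|+|x^i||y^a| \pmod 2$. This congruence is precisely the ``little calculation'' that closes the paper's proof.
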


\begin{proof}
It is enough to show that
    \begin{equation}\label{eq_eeee}
        (\hat{T}\varphi)(\partial_{y^a}|_{U'}) =  \dd{(\varphi|_{U'}^\ast x^i)}{y^a} \, \partial_{x^i}^\star|_{U'} ,
    \end{equation}
    for every $a \in \{1, \dotsc, r\}$, or equivalently that 
    \begin{equation}\label{eq_makarena}
    \rest{\left[(\hat{T}\varphi)^\rmT (\varphi^\star(\alpha^i))\right]}{U'}(\partial_{y^a}|_{U'}) = (-1)^{|x^i| \, (|x^i| - |y^a|)} \, \dd{(\varphi|_{U'}^\ast x^i)}{y^a},
    \end{equation}
 	where $(\hat{T}\varphi)^\rmT : \varphi^\star \Omega^1_{\calM} \to \Omega^1_{\calR}$ is the transpose of $\hat{T}\varphi : \scrX_\calR \to \varphi^\star \scrX_\calM$. But we know that 
\begin{equation}
\begin{split}
(\hat{T}\varphi)^\rmT (\varphi^\star(\alpha^i)) &= (\hat{T}\varphi)^\rmT (\hat{\varphi}^\ast(\alpha^i)) = (T\varphi)^\ast (\alpha^i) = (-1)^{|x^i|} (T\varphi)^\ast (\mathrm{d} x^i) \\
&= (-1)^{|x^i|}\, \mathrm{d}y^b \, \dd{(\varphi|_{U'}^\ast x^i)}{y^b} = (-1)^{|x^i| + |y^b| \, (|x^i| - |y^b|) + |y^b|} \,\dd{(\varphi|_{U'}^\ast x^i)}{y^b} \beta^b,
\end{split}
\end{equation}
where $\alpha^i$ and $\beta^b$ are dual sections to $\partial_{x^i}$ and $\partial_{y^b}$, respectively.
A little calculation tells us that
\begin{equation}
|x^i| + |y^b| \, (|x^i| - |y^b|) + |y^b| = |x^i|(|x^i| - |y^b|) \quad \mod 2,
\end{equation}
which shows (\ref{eq_makarena}), hence (\ref{eq_eeee}) and so finishes the proof.
\end{proof}

\begin{corollary}\label{cor_related_VFs} 
Consider $X \in \scrX_\calM(M)$,  $Y \in \scrX_\calR(R)$ and $\varphi : \calR \to \calM$. Then
there is $Y \sim_\varphi X$ if and only if $(\hat{T}\varphi)(Y) = \varphi^\star X$.
\end{corollary}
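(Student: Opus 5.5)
The claim is local, so I would work over a coordinate chart and compare coefficients in the pullback frame. Both $Y \sim_\varphi X$ and $(\hat{T}\varphi)(Y) = \varphi^\star X$ can be tested on an open cover of $R$. For the first condition this is Lemma \ref{lemma_restrictively_related_VFs}. For the second it holds because $\varphi^\star \scrX_\calM$ is a sheaf and both $\hat{T}\varphi$ and $\eta$ commute with restrictions. It therefore suffices to take coordinates $\{y^a\}$ on $U \in \op(R)$ and $\{x^i\}$ on $V \in \op(M)$ with $U' = \uline{\varphi}^{-1}(V)\cap U$, and to restrict $X$ to $V$ and $Y$ to $U'$. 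Sets $U'$ of this form cover $R$.

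On such a chart, write $X|_V = X^i \partial_{x^i}$ and $Y|_{U'} = Y^a\partial_{y^a}$. By $\cifty_\calM$-linearity of the unit $\eta$ and the module structure on $\varphi_\ast\varphi^\star\scrX_\calM$, the pullback section is
\begin{equation}
\varphi^\star X|_{U'} = \varphi|_{U'}^\ast(X^i)\, \partial_{x^i}^\star|_{U'}.
\end{equation}
By Lemma \ref{lemma_locally_pullback_section}, the other side is $(\hat{T}\varphi)(Y|_{U'}) = Y^a \, \partial_{y^a}(\varphi|_{U'}^\ast x^i)\, \partial_{x^i}^\star|_{U'}$. The sections $\{\partial_{x^i}^\star|_{U'}\}$ form a local frame of $\varphi^\star\scrX_\calM$ (they are dual to the frame $\varphi^\star(\alpha^i)$). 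Hence $(\hat{T}\varphi)(Y) = \varphi^\star X$ on $U'$ if and only if
\begin{equation}
Y\big(\varphi^\ast(x^i)\big) = \varphi^\ast\big(X(x^i)\big) \quad \text{for all } i,
\end{equation}
since $X^i = X(x^i)$.

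It remains to show that this coordinate condition is equivalent to $Y|_{U'} \sim_{\varphi|_{U'}} X|_V$. One direction is immediate by applying the relatedness to $f = x^i$. For the converse, both $f \mapsto Y(\varphi^\ast f)$ and $f \mapsto \varphi^\ast(Xf)$ are $\varphi$-derivations $\cifty_\calM(V) \to \cifty_\calR(U')$ of the same degree, and they agree on the coordinates. The main obstacle is that a general $f$ is a formal power series in the graded coordinates, not a polynomial in the $x^i$. To handle it, I would use the standard fact from \cite{GTGM} that a $\varphi$-derivation is determined by its values on coordinates. Concretely, I would expand $D(f) = \varphi^\ast(\partial_{x^i} f)\, D(x^i)$, which one gets via the chain rule for graded smooth maps, or equivalently via the identity $(T\varphi)^\ast(\mathrm{d}x^i) = \mathrm{d}y^b\, \partial_{y^b}(\varphi^\ast x^i)$ already used in the proof of Lemma \ref{lemma_locally_pullback_section}. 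This shows the two derivations coincide on all of $\cifty_\calM(V)$. Finally, gluing over the cover via Lemma \ref{lemma_restrictively_related_VFs} gives $Y \sim_\varphi X$ globally, and conversely.
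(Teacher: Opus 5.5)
Your proposal is correct and takes essentially the paper's approach: localize to charts, expand $(\hat{T}\varphi)(Y)$ via Lemma \ref{lemma_locally_pullback_section} and $\varphi^\star X$ via $\cifty_\calM$-linearity of the unit, then compare coefficients in the frame $\{\partial_{x^i}^\star\}$. You also spell out the localization and the chain-rule step that the paper compresses into ``this translates to''; there is one harmless slip there, since the chain rule should read $D(f) = D(x^i)\,\varphi^\ast(\partial_{x^i} f)$ (your ordering is off by the sign $(-1)^{(|D|+|x^i|)(|f|-|x^i|)}$), but both derivations obey the same formula, so the argument is unaffected.
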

\begin{proof}
By definition, $Y \sim_\varphi X$ means that $Y \circ \varphi^\ast = \varphi^\ast \circ X$. Locally, in the notation of Lemma \ref{lemma_locally_pullback_section}, this translates to $Y|_{U'}(\varphi^\ast (x^i)) = \varphi^\ast(X^i|_{V})$. A different way of writing this is
\begin{equation}
Y^a|_{U'}  \,  \dd{(\varphi|_{U'}^\ast x^i)}{y^a} = \varphi^\ast(X^i|_{V}).
\end{equation}
From Lemma \ref{lemma_locally_pullback_section} we know that 
\begin{equation}
(\hat{T}\varphi)(Y|_{U'}) = Y^a|_{U'}  \,  \dd{(\varphi|_{U'}^\ast x^i)}{y^a} \, \partial_{x^i}^\star|_{U'},
\end{equation}
and from the fact that the assignment $X \mapsto \varphi^\star X$ is a morphism of sheaves of $\cifty_\calM$-modules we know that
\begin{equation}
(\varphi^\star X)|_{U'} = \varphi^\ast(X^i)|_{U'} \,\partial_{x^i}^\star|_{U'} = \varphi^\ast(X^i|_{V})\, \partial_{x^i}^\star|_{U'}.
\end{equation}
The result follows.
\end{proof}

\begin{prop}\label{prop_equivalently_integral}
    Let $(\calR, \iota)$ be an immersed submanifold of $\calM$ and let $\scrD$ be a distribution on $\calM$. Then both $\im(\hat{T}\iota)$ and $\iota^\star \scrD$ are subbundles of $\iota^\star T\calM$ and the following holds: \vspace{-5pt}
\begin{enumerate}
    \item The graded vector bundles $\im(\hat{T} \iota)$ and $\iota^\star\scrD$ have the same fibers if and only if $(\calR, \iota)$ is a fiberwise integral submanifold for $\scrD$.
    \item There is $\im(\hat{T} \iota) = \iota^\star(\scrD)$ if and only if $(\calR, \iota)$ is an integral submanifold for $\scrD$.
\end{enumerate}
\end{prop}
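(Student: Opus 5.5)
First I will establish that both objects are subbundles of $\iota^\star T\calM$. For $\iota^\star\scrD$: locally, by the argument in Lemma \ref{lemma_linearly_indep_vectors}, choose a frame $X_1,\dotsc,X_m$ of $\scrX_\calM$ on some $V\in\op(M)$ such that $X_1,\dotsc,X_r$ frame $\scrD(V)$. The pullback functor $\iota^\star$ sends local frames to local frames, so $X_1^\star,\dotsc,X_m^\star$ frame $\iota^\star\scrX_\calM$ over $\uline{\iota}^{-1}(V)$. The first $r$ of them span $\iota^\star\scrD$, which is therefore a subbundle. For $\im(\hat T\iota)$: since $\iota$ is an immersion, $T_p\iota$ is injective for every $p$, and $T_p\iota$ factors through the fiber map of $\hat T\iota$ at $p$. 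Hence $\hat T\iota$ is a fiberwise injective GVB morphism over the identity. I will then invoke (from \cite{3GVB}) the fact that a fiberwise injective morphism of graded vector bundles over the identity has image a subbundle. Concretely, the images $(\hat T\iota)(\partial_{y^a})$ are, by Lemma \ref{lemma_locally_pullback_section}, given by the matrix $\partial(\iota^\ast x^i)/\partial y^a$, whose body has full rank at each point. One can therefore complete them to a local frame using some of the $\partial_{x^i}^\star$, after shrinking.

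For part 1, I identify the fibers. The fiber of $\iota^\star\scrD$ at $p$ is canonically $\scrD_{\uline{\iota}(p)}\subseteq T_{\uline{\iota}(p)}\calM\cong(\iota^\star T\calM)_p$. The fiber of $\im(\hat T\iota)$ at $p$ is the image of the fiber map of $\hat T\iota$, which under the same identification equals $(T_p\iota)(T_p\calR)$. This holds because $\hat T\iota$ composed with $\hat\iota$ is $T\iota$ and $\hat\iota$ is a fiberwise isomorphism. Part 1 is then immediate from Definition \ref{def_integral_submanifold_1}.

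For part 2, assume first that $(\calR,\iota)$ is integral. For a local frame $X_1,\dotsc,X_r$ of $\scrD(V)$, each $X_i$ is tangent, so some $Y_i\sim_\iota X_i$. Corollary \ref{cor_related_VFs}, applied locally using Lemma \ref{lemma_restrictively_related_VFs}, gives $(\hat T\iota)(Y_i)=X_i^\star$. Thus $\iota^\star\scrD\subseteq\im(\hat T\iota)$ locally, and hence globally since both are subsheaves. Both are subbundles of the same rank $(r_j)$, because $\dim\calR=(r_j)$ and $\hat T\iota$ is injective on sections. A subbundle contained in another of equal rank coincides with it, which can be checked on local frames via the fibers and Nakayama-type invertibility of the transition matrix. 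Conversely, suppose $\im(\hat T\iota)=\iota^\star\scrD$. Then ranks force $\dim\calR=(r_j)$. For any $X\in\scrD(M)$ we have $X^\star\in(\iota^\star\scrD)(R)=\im(\hat T\iota)(R)$. Since $\hat T\iota$ is injective onto a subbundle, it is an isomorphism onto its image sheaf, so there is a unique $Y$ with $(\hat T\iota)(Y)=X^\star$. By Corollary \ref{cor_related_VFs}, $Y\sim_\iota X$, and Lemma \ref{lemma_equivalent_strongly_integral} finishes.

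The main obstacle is the subbundle claim for $\im(\hat T\iota)$, and specifically the surjectivity on sections used in the converse direction. Sheaf-theoretic images need not be the sheafification-free image on global sections. I plan to handle this locally: on a cover, $\hat T\iota$ restricted to a coordinate patch maps a frame to part of a frame, so local preimages exist. These preimages glue by injectivity of $\hat T\iota$, and tangency can then be concluded via Lemma \ref{lemma_locally_tangent}. I also need to check that Corollary \ref{cor_related_VFs}, stated for global fields, applies on open subsets, which follows by restriction.
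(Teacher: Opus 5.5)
Your proposal is correct and follows essentially the same route as the paper: subbundle-ness from fiberwise injectivity of $\hat{T}\iota$ together with \cite{3GVB}, part 1 via the fiberwise isomorphism $\hat{\iota}_{(p)}$, and part 2 via Corollary \ref{cor_related_VFs} applied to local frames. The differences are minor. In the integral $\Rightarrow$ equality direction you use an equal-rank subbundle argument, whereas the paper shows that the $Y_k$ frame $\scrX_\calR$. In the converse you are more explicit than the paper about the dimension condition and about lifting global sections through the image sheaf.
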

\begin{proof}
The situation  can be drawn as
\begin{equation}\label{diag_integral_pullback}
    \begin{tikzcd}[column sep = small]
        {}
            &{}
                &{T\calR}
                \arrow[ld, "{\hat{T}\iota}"']
                \arrow[dddl, "{}"]
                \arrow[rrd, "{T\iota}"]
                    &{}
                        &{}
                            &{}\\
        {}
            &{\iota^\star T\calM}
            \arrow[rrr, "{\hat{\iota}}", crossing over]
            \arrow[dd, "{}"]
                &{}
                    &{}
                        &{T \calM}
                        \arrow[dd, "{}"]
                            &{} \\
        {\iota^\star \scrD}
        \arrow[dr, "{}"]
        \arrow[ru, "{}", hook]
        \arrow[rrr, "{}", crossing over]
            &{}
                &{}
                    &{\scrD}
                    \arrow[ru, "{}", hook]
                    \arrow[rd, "{}"]
                        &{}
                            &{} \\
        {}
            &{\calR}
            \arrow[rrr, "{\iota}"]
                &{}
                    &{}
                        &{\calM}
                            &{}
    \end{tikzcd}.
\end{equation}
By assumption $T \iota$ is a fiberwise injective GVB morphism. As a result, so is the induced morphism $\hat{T}\iota$ and, consequently, $\im(\hat{T}\iota)$ is a subbundle of $\iota^\star T\calM$ by \cite[Proposition 3.14]{3GVB}. Similarly one argues that $\iota^\star \scrD$ is also a subbundle of $\iota^\star T\calM$.

    $(\calR, \iota)$ is a fiberwise integral submanifold for $\scrD$ if and only if $\im(T_p \iota) = \scrD_{\uline{\iota}(p)}$ for every $p \in R$. Since the fiber map $\hat{\iota}_{(p)}$ is a linear isomorphism for every $p$, the claim follows from commutativity of the diagram (\ref{diag_integral_pullback}).
    \item If $\im(\hat{T} \iota) = \iota^\star(\scrD)$, then for every vector field $X \in \scrD(M)$ there is some $Y \in \scrX_\calR(R)$ such that $(\hat{T}\iota) (Y) = \iota^\star X$, which is the same as $Y \sim_\iota X$ by Corollary \ref{cor_related_VFs}. 
    
    Conversely, let $(\calR, \iota)$ be an integral submanifold for $\scrD$. Consider a local frame $\{X_k\}_{k = 1}^r$ for $\scrD$ on some $V \in \op(M)$ such that $\uline{\iota}^{-1}(V) \neq \emptyset$. By assumption, for every $k \in \{1, \dotsc, r\}$ there exists $Y_k \in \scrX_\calR(\uline{\iota}^{-1}(V))$ such that $Y_k \sim_\iota X_k$, i.e. $(\hat{T}\iota)(Y_k) = \iota^\star X_k$. In particular, this implies that $\{Y_k|_{q}\}_{k = 1}^r$ are linearly independent vectors for every $q \in \uline{\iota}^{-1}(V)$ and hence $\{Y_{k}\}_{k = 1}^{r}$ forms a local frame for $\scrX_\calR$ on $\uline{\iota}^{-1}(V)$. Consequently, so $\{(\hat{T}\iota)(Y_k)\}_{k = 1}^r$ forms a local frame for $\im(\hat{T}\iota)$ on $\uline{\iota}^{-1}(V)$. Finally, since $\iota^\star X_k$ form a local frame for $\iota^\star \scrD$, we find that the $\cifty_\calR(\uline{\iota}^{-1}(V))$-modules $(\im(\hat{T}\iota))(\uline{\iota}^{-1}(V))$ and $(\iota^\star \scrD)(\uline{\iota}^{-1}(V))$ contain each others generators, and hence they must be equal.

    By the above argument we know there exists an open cover $\{U_\alpha\}_{\alpha \in I}$ of $R$ where $(\im(\hat{T}\iota))(U_\alpha) = \left(\iota^\star \scrD \right)(U_\alpha)$,
    for every $\alpha \in I$. Indeed, simply set $U_\alpha := \uline{\iota}^{-1}(V_\alpha)$ for an open cover $\{V_\alpha\}_{\alpha \in I}$ of $\uline{\iota}(R)$ by sets $V_\alpha$ as above. This immediately implies equality of the sheaves of modules $\im (\hat{T}\iota) = \iota^\star \scrD$, see e.g. \cite[Remark 4.29]{3GVB}.
\end{proof}

Using this language of pullback bundles gives us an alternative way to search for fiberwise integral and integral submanifolds. The following example shows that an involutive distribution may, at the same time, admit fiberwise integral submanifolds that are integral, and those that are not.

\begin{example} \label{example_strongly_and_weakly_integral_submanifolds}
Let $\calM := \{\ast\}^{(m_j)}$ be a one point manifold with coordinates $\xi^1, \xi^2, \xi^3$ of degree 1 and $\eta$ of degree 3, and let $\calR := \{\star\}^{(r_j)}$ be a one point manifold with coordinates $\theta^1, \theta^2, \theta^3$ of degree 1. Let $\scrD$ be a distribution on $\calM$ generated by the coordinate vector fields $\{\partial_{\xi^i}\}_{i = 1}^3$. We will find all immersions $\iota : \calR \to \calM$ which make $(\calR, \iota)$ into a fiberwise integral or integral submanifold for $\scrD$. A general graded smooth map $\iota: \calR \to \calM$ is given by
\begin{equation}
\iota^\ast(\xi^j) = {A^j}_k \, \theta^k, \quad \text{and} \quad \iota^\ast(\eta) = B\, \theta^1\theta^2\theta^3,
\end{equation}
for some numbers ${A^j}_k, B \in \R$. We know that $\iota^\star\scrD$ is generated by the pullback sections $\{\partial_{\xi^i}^\star\}_{i = 1}^3$. The image of $\hat{T}\iota$ is in turn generated by sections $\{(\hat{T}\iota)(\partial_{\theta^i})\}_{i = 1}^3$. From Lemma \ref{lemma_locally_pullback_section} we have
\begin{equation}
    (\hat{T}\iota)(\partial_{\theta^i}) = \dd{\iota^\ast(\xi^j)}{\theta^i} \, \partial_{\xi^j}^\star + \dd{\iota^\ast(\eta)}{\theta^i} \, \partial_{\eta}^\star = {A^j}_i \, \partial_{\xi^j}^\star + (-1)^{i + 1} B\left( \prod_{k \neq i}\theta^k \right) \partial_{\eta}^\star.
\end{equation}
From this we immediately see that  $(\calR,\iota)$ is a fiberwise integral submanifold for $\scrD$ iff the numbers ${A^i}_j$ form an invertible matrix. Furthermore, it becomes an integral submanifold iff $B = 0$.
\end{example}

\section{Global Frobenius Theorem}\label{section_5}

In the classical setting, Global Frobenius theorem gives, for any involutive distribution and any point, a unique (maximal, connected) integral submanifold passing through this point. In our case, however, the best we can hope for is to require uniqueness up to some equivalence. We will use the following:

\begin{definition}\label{def_equivalent_submanifolds}
Let $(\calR, \iota)$ and $(\calR', \iota')$ be two immersed submanifolds of $\calM$. 
We say they are \textbf{equivalently immersed}  if there exists a graded diffeomorphism $\vartheta : \calR' \to \calR$ such that
\begin{equation}\label{eq_coincident_submanifolds}
\begin{tikzcd}
{}
	&{\calM}
		&{} \\
\calR
\arrow[ru, "{\iota}"]
	&{}
		&{\calR'}
		\arrow[lu, "{\iota'}"']
		\arrow[ll, "{\vartheta}"']
\end{tikzcd}
\end{equation}
commutes.
\end{definition}

Clearly this is an equivalence relation. Next two lemmas ensure that it can be verified locally, and that, under this equivalence, the notion of being a fiberwise integral or integral submanifold behaves transitively.

\begin{lemma}\label{lma_similarly_immersed}
Let $(\calR, \iota)$ and $(\calR', \iota')$ be two immersed submanifolds of $\calM$. If there exists an open cover $\{V_\alpha\}_{\alpha \in I}$ of $R$ and $\{V'_\alpha\}_{\alpha \in I}$ of $R'$ such that $(\calR|_{V_\alpha}, \iota|_{V_\alpha})$ and $(\calR'|_{V'_\alpha}, \iota'|_{V'_\alpha})$ are equivalently immersed for every $\alpha \in I$, then $(\calR, \iota)$ and $(\calR', \iota')$ are equivalently immersed.
\end{lemma}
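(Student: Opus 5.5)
The plan is to glue the local diffeomorphisms $\vartheta_\alpha : \calR'|_{V'_\alpha} \to \calR|_{V_\alpha}$ into a global graded diffeomorphism $\vartheta : \calR' \to \calR$. For every $\alpha \in I$ we have $\iota|_{V_\alpha} \circ \vartheta_\alpha = \iota'|_{V'_\alpha}$. The argument has three steps.

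\textbf{Step 1: the underlying maps.} Since $\uline{\iota}$ is injective, $\uline{\vartheta_\alpha} = \uline{\iota}^{-1} \circ \uline{\iota'}$ on $V'_\alpha$. In particular $\uline{\iota'}(V'_\alpha) = \uline{\iota}(V_\alpha)$. Hence the maps $\uline{\vartheta_\alpha}$ automatically agree on overlaps $V'_{\alpha\beta} := V'_\alpha \cap V'_\beta$. They therefore define a continuous, indeed smooth, map $\uline{\vartheta} : R' \to R$, which equals $\uline{\iota}^{-1}\circ\uline{\iota'}$ wherever it is defined.

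\textbf{Step 2: the images coincide.} Because $\{V_\alpha\}$ covers $R$ and $\{V'_\alpha\}$ covers $R'$, we get $\uline{\iota}(R) = \bigcup_\alpha \uline{\iota}(V_\alpha) = \bigcup_\alpha \uline{\iota'}(V'_\alpha) = \uline{\iota'}(R')$. So $\uline{\vartheta}$ is a bijection $R' \to R$. Moreover $\uline{\vartheta}(V'_{\alpha\beta}) = V_\alpha\cap V_\beta =: V_{\alpha\beta}$.

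\textbf{Step 3: the sheaf morphisms.} Restrict to an overlap. Both $\vartheta_\alpha|_{V'_{\alpha\beta}}$ and $\vartheta_\beta|_{V'_{\alpha\beta}}$ map into $\calR|_{V_{\alpha\beta}}$ and satisfy $\iota|_{V_{\alpha\beta}} \circ \vartheta_\alpha|_{V'_{\alpha\beta}} = \iota'|_{V'_{\alpha\beta}} = \iota|_{V_{\alpha\beta}}\circ \vartheta_\beta|_{V'_{\alpha\beta}}$. The restriction $\iota|_{V_{\alpha\beta}}$ is still an immersed submanifold, so by Proposition \ref{prop_injective_immersions} it is a monomorphism. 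Hence $\vartheta_\alpha$ and $\vartheta_\beta$ agree on $V'_{\alpha\beta}$.

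Graded smooth maps glue along open covers, since morphisms of locally ringed spaces form a sheaf. Concretely, $\vartheta^\ast(f)$ for $f\in\cifty_\calR(W)$ is obtained by gluing the local sections $\vartheta_\alpha^\ast(f|_{W\cap V_\alpha})$ on $\uline{\vartheta}^{-1}(W)\cap V'_\alpha$; these agree on overlaps by the previous paragraph. This yields $\vartheta : \calR'\to\calR$. It satisfies $\iota\circ\vartheta = \iota'$ because the identity holds on each $V'_\alpha$, and equality of graded smooth maps is local.

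\textbf{Invertibility.} The same construction applied to the inverses $\vartheta_\alpha^{-1}$ (with roles swapped, using that $\iota'$ is also a monomorphism) produces $\psi : \calR \to \calR'$. The compositions $\psi\circ\vartheta$ and $\vartheta\circ\psi$ are identities locally, hence globally. Alternatively, $\iota'\circ\psi\circ\vartheta = \iota'$ together with the fact that $\iota'$ is a monomorphism gives $\psi\circ\vartheta=\id$ directly.

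The main obstacle is the compatibility on overlaps in Step 3. It rests entirely on the monomorphism property, applied to the restricted immersion $\iota|_{V_{\alpha\beta}}$. One must check that the restrictions of $\vartheta_\alpha$ indeed land in $\calR|_{V_{\alpha\beta}}$, which is where the injectivity of $\uline{\iota}$ from Step 1 is needed.
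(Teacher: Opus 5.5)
Your proposal is correct and follows essentially the same route as the paper: the underlying maps agree on overlaps by injectivity of $\uline{\iota}$, and the $\vartheta_\alpha$ agree on $V'_{\alpha\beta}$ because $\iota|_{V_{\alpha\beta}}$ is a monomorphism (Proposition \ref{prop_injective_immersions}), so they glue to some $\vartheta$ with $\iota \circ \vartheta = \iota'$. The only difference is the last step: the paper notes that $\uline{\vartheta}$ is bijective and invokes \cite[Proposition 4.31]{GTGM} (a bijective local diffeomorphism is a diffeomorphism), whereas you build the inverse directly, either by gluing the $\vartheta_\alpha^{-1}$ or via the monomorphism property of $\iota'$; this is equally valid and avoids that citation.
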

\begin{proof}
    By assumption there exists, for every $\alpha \in I$, a graded diffeomorphism $\vartheta_\alpha : \calR'|_{V'_\alpha} \to \calR|_{V_\alpha}$ such that $\iota|_{V_\alpha} \circ \vartheta_{\alpha} = \iota'|_{V'_\alpha}$. Note that for any $p \in V'_{\alpha \beta} := V'_{\alpha} \cap V'_\beta$, we have $\uline{\iota}(\uline{\vartheta_\alpha}(p)) = \uline{\iota}'(p) = \uline{\iota}(\uline{\vartheta_\beta}(p))$. Since $\uline{\iota}$ is injective, this implies $\uline{\vartheta_\alpha}|_{V'_{\alpha \beta}} = \uline{\vartheta_\beta}|_{V'_{\alpha \beta}}$. As a result we can write, for each $\alpha, \beta \in I$, that  
    \begin{equation}
        \rest{\iota}{V_{\alpha \beta}} \circ \rest{\vartheta_{\alpha}}{V'_{\alpha \beta}} = \rest{\iota'}{V'_{\alpha\beta}} = \rest{\iota}{V_{\alpha \beta}} \circ \rest{\vartheta_{\beta}}{V'_{\alpha \beta}}.
    \end{equation}
    By proposition \ref{prop_injective_immersions}, $\iota|_{V_{\alpha \beta}}$ is a monomorphism, thus the graded smooth maps $\vartheta_{\alpha}$ agree on overlaps, and so glue together the desired graded smooth map $\vartheta: \calR' \to \calR$. It is not difficult to see that $\uline{\vartheta}$ must be bijective. As $\vartheta$ is a local diffeomorphism, it is a diffeomorphism by \cite[Proposition 4.31]{GTGM}.
\end{proof}

\begin{lemma}\label{lemma_strongly_weakly_integral}
Let $(\calR, \iota)$ and $(\calR', \iota')$ be two equivalently immersed submanifolds of $\calM$ and let $\scrD$ be a distribution on $\calM$. If $(\calR, \iota)$ is a fiberwise integral or integral submanifold for $\scrD$, then the same is true for $(\calR', \iota')$.
\end{lemma}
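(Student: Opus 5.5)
We are given a graded diffeomorphism $\vartheta : \calR' \to \calR$ with $\iota \circ \vartheta = \iota'$, and we treat the two notions of integrality separately.

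\textbf{Fiberwise integral case.} By the chain rule, $T_{p'}\iota' = T_{\uline{\vartheta}(p')}\iota \circ T_{p'}\vartheta$ for every $p' \in R'$. Since $\vartheta$ is a diffeomorphism, $T_{p'}\vartheta : T_{p'}\calR' \to T_{\uline{\vartheta}(p')}\calR$ is a linear isomorphism. It follows that
\begin{equation}
(T_{p'}\iota')(T_{p'}\calR') = (T_{\uline{\vartheta}(p')}\iota)(T_{\uline{\vartheta}(p')}\calR) = \scrD_{\uline{\iota}(\uline{\vartheta}(p'))} = \scrD_{\uline{\iota}'(p')},
\end{equation}
which is exactly the fiberwise integrality condition for $(\calR', \iota')$.

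\textbf{Integral case.} First, the graded dimension of $\calR'$ equals that of $\calR$, since the two are diffeomorphic; so the dimension condition holds. Next, let $U \in \op(M)$ and $X \in \scrD(U)$. By assumption there is $Y \in \scrX_\calR(\uline{\iota}^{-1}(U))$ with $Y \sim_\iota X$.

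Set $V' := \uline{\iota}'^{-1}(U) = \uline{\vartheta}^{-1}(\uline{\iota}^{-1}(U))$, and define the pushed-back field
\begin{equation}
Y' := (\vartheta^{-1})_\ast Y = (\vartheta|_{V'})^\ast \circ Y \circ ((\vartheta|_{V'})^{-1})^\ast \in \scrX_{\calR'}(V').
\end{equation}
This is the same construction as $\phi_\ast$ in the proof of Theorem \ref{thm_local_frob}. It satisfies $Y' \sim_{\vartheta} Y$, restricted to the appropriate opens.

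Relatedness composes: if $Y' \sim_\vartheta Y$ and $Y \sim_\iota X$, then
\begin{equation}
Y'(\iota'^\ast f) = Y'(\vartheta^\ast \iota^\ast f) = \vartheta^\ast(Y(\iota^\ast f)) = \vartheta^\ast \iota^\ast(Xf) = \iota'^\ast(Xf)
\end{equation}
for all $f \in \cifty_\calM(U)$. Here the pullbacks are taken along the restrictions $\vartheta|_{V'}$ and $\iota|_{\uline{\iota}^{-1}(U)}$, which is legitimate by Lemma \ref{lemma_restrictively_related_VFs}. Hence $X$ is tangent to $\calR'$.

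Alternatively, the integral case can be done with Proposition \ref{prop_equivalently_integral}. One has $\iota'^\star \cong \vartheta^\star \iota^\star$ and $\hat{T}\iota' = \vartheta^\star(\hat{T}\iota) \circ \hat{T}\vartheta$, with $\hat{T}\vartheta$ an isomorphism, so the equality of images transfers. The direct argument above avoids these identifications.

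The only mildly delicate step is bookkeeping the restrictions to open subsets, including the case $\uline{\iota}'^{-1}(U) = \emptyset$, which is covered by Remark \ref{remark_tangent_emptyset}. No genuine obstacle is expected.
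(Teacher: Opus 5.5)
Your proof is correct. The paper leaves this verification to the reader, and your argument is exactly the straightforward check intended: for the fiberwise case you use the chain rule together with bijectivity of $T_{p'}\vartheta$, and for the integral case you transport the $\iota$-related field $Y$ to $Y' = (\vartheta^{-1})_\ast Y$ and compose relatedness along $\iota' = \iota \circ \vartheta$.
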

\begin{proof}
    The verification is straightforward and we leave it to the reader.
\end{proof}

\begin{corollary}\label{cor_nonequivalently_integral}
For a given involutive distribution $\scrD$ on $\calM$ there may exist fiberwise integral submanifolds $(\calR, \iota)$, $(\calR', \iota')$ for $\scrD$, such that $\uline{\iota}(R) = \uline{\iota}'(R')$, which are not equivalently immersed.
\end{corollary}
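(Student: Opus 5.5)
The plan is to build the counterexample from Example \ref{example_strongly_and_weakly_integral_submanifolds}. There $\calM = \{\ast\}^{(m_j)}$ has coordinates $\xi^1,\xi^2,\xi^3$ of degree $1$ and $\eta$ of degree $3$, and $\scrD$ is spanned by $\partial_{\xi^1},\partial_{\xi^2},\partial_{\xi^3}$. This $\scrD$ is involutive, since the coordinate vector fields graded-commute: $[\partial_{\xi^i},\partial_{\xi^j}] = 0$. Take $\calR = \calR' = \{\star\}^{(r_j)}$ with degree $1$ coordinates $\theta^1,\theta^2,\theta^3$. Define $\iota$ by $\iota^\ast(\xi^j) = \theta^j$, $\iota^\ast(\eta) = 0$, and $\iota'$ by $\iota'^\ast(\xi^j) = \theta^j$, $\iota'^\ast(\eta) = \theta^1\theta^2\theta^3$. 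In both cases $A$ is the identity matrix, so by the example both are fiberwise integral. The first is integral ($B=0$) and the second is not ($B=1$). Both underlying maps are the bijection of one-point sets, so $\uline{\iota}(R) = \uline{\iota}'(R')$. Both are immersions: $T_\star\iota$ has matrix $A$ in the $\xi$-directions, and the $\eta$-components vanish at the point.

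To show they are not equivalently immersed, the quickest route is Lemma \ref{lemma_strongly_weakly_integral}. If $(\calR,\iota)$ and $(\calR',\iota')$ were equivalently immersed, then $(\calR',\iota')$ would be integral because $(\calR,\iota)$ is, contradicting $B = 1 \neq 0$ in Example \ref{example_strongly_and_weakly_integral_submanifolds}.

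As a sanity check, and as a fallback if one prefers not to rely on that lemma, there is a direct argument. Any diffeomorphism $\vartheta : \calR' \to \calR$ has $\vartheta^\ast(\theta^k) = {C^k}_l\theta^l$ with $C$ invertible, since there is nothing of degree $1$ other than linear terms. Then $\vartheta^\ast(\iota^\ast(\eta)) = 0$, while $\iota'^\ast(\eta) = \theta^1\theta^2\theta^3 \neq 0$, so $\iota\circ\vartheta \neq \iota'$.

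The only step needing care is confirming the immersion property and the tangency computation. Both are already contained in the example via Lemma \ref{lemma_locally_pullback_section} and Proposition \ref{prop_equivalently_integral}, so no real obstacle is expected.
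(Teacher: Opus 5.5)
Your proposal is correct and follows the paper's own proof: it uses the one-point example with $B = 0$ versus $B \neq 0$ and invokes Lemma \ref{lemma_strongly_weakly_integral} to rule out an equivalence. Your fallback check, $\vartheta^\ast(\iota^\ast(\eta)) = 0 \neq \theta^1\theta^2\theta^3 = (\iota')^\ast(\eta)$, is also valid, and it has the small advantage of not depending on that lemma, whose proof the paper leaves to the reader.
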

\begin{proof}
    In Example \ref{example_strongly_and_weakly_integral_submanifolds} we encountered a one-point $\Z$-manifold $\calM$ with an involutive distribution that admits an integral submanifold (when $B = 0$), but also a fiberwise integral submanifold that is not integral (when $B \neq 0$). By Lemma \ref{lemma_strongly_weakly_integral}, two such submanifolds cannot be equivalently immersed.
\end{proof}

Using flat coordinates, whose existence is ensured by the Local Frobenius theorem, we find a local prototype of an integral submanifold for any involutive distribution.

\begin{example}\label{example_prototype_strongly_integral_submanifold}
Let $\scrD$ be an involutive distribution of rank $(r_{-j})$ on $\calM$ and fix some point $p_0 \in M$. From the Local Frobenius theorem we know that there exists a chart $(\phi, U)$ around $p_0$ introducing coordinates $(x^\mu, \xi^a)$ such that $\scrD(U)$ is spanned by $\{\partial_{x^\mu}\}_{\mu = 1}^{r_0}$ and $\{\partial_{\xi^a}\}_{a = 1}^{\hat{r}}$, i.e. flat coordinates for $\scrD$. Denote $\hat{U} := \uline{\phi}(U)$. We may assume that $\hat{U}\subseteq{\R^{m_0}}$ is a cube centered at $\uline{\phi}(p_0)  = \bld{0}$. 

Let $\pr_{r_0} : \R^{m_0} \to \R^{r_0}$ be the projection map on the first $r_0$ entries, denote $\hat{Q} = \pr_{r_0}(\hat{U}) \subseteq \R^{r_0}$ and consider the graded domain $\hat{Q}^{(r_{j})}$. Let us denote the coordinates on therein by $(y^\mu, \theta^a)$, in such a way that $|\theta^a| = |\xi^a|$ for all $a \leq \hat{r}$. 
Define a graded smooth map $\sigma : \hat{Q}^{(r_{j})} \to\hat{U}^{(m_j)}$ by $\uline{\sigma}(s_1, \dotsc, s_{r_0}) = (s_1, \dotsc, s_{r_0}, 0, \dotsc, 0)$, ${\sigma}^\ast(x^\mu) = x^\mu \circ \uline{\sigma}$, ${\sigma}^\ast(\xi^a) = \theta^a$ for $a \leq \hat{r}$ and ${\sigma}^\ast(\xi^a) = 0$ for $a > \hat{r}$.

Denote $\iota := \phi^{-1} \circ \sigma$. Clearly $\iota : \hat{Q}^{(r_{j})} \to \calM|_{U}$ is an immersion and $\scrX_{(r_{j})}$ and $\scrD$ have the same rank by construction. It is not hard to see that $\partial_{y^i} \sim_{\sigma} \partial_{x^i} \in \scrX_{(m_j)}(\hat{U})$, hence $\partial_{y^i} \sim_\iota \partial_{x^i} \in \scrX_\calM(U)$, for all $i \in \{1, \dotsc, r\}$. Consequently, $(\hat{Q}^{(r_{j})}, \iota)$ is an integral submanifold for $\scrD$.
\end{example}

\begin{corollary}\label{cor_involutive_integrable}
    Every involutive distribution on a $\Z$-manifold is integrable.
\end{corollary}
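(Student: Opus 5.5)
The corollary follows almost immediately from Example \ref{example_prototype_strongly_integral_submanifold}, which in turn rests on the Local Frobenius theorem (Theorem \ref{thm_local_frob}). By Definition \ref{definition_integrable_distribution}, we must show that for every point $p_0 \in M$ there is an integral submanifold $(\calR, \iota)$ for $\scrD$ with $p_0 \in \uline{\iota}(R)$.

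First, fix $p_0 \in M$ and apply Theorem \ref{thm_local_frob} to obtain a chart $(\phi, U)$ around $p_0$ with flat coordinates $(x^\mu, \xi^a)$ for $\scrD$. After shrinking, we may take $\uline{\phi}(U) = \hat{U}$ to be a cube centered at $\uline{\phi}(p_0) = \bld{0}$. Then build the map $\iota = \phi^{-1}\circ\sigma : \hat{Q}^{(r_j)} \to \calM|_U$ exactly as in that example. Since $\uline{\sigma}(\bld{0}) = \bld{0}$, we get $\uline{\iota}(\bld{0}) = p_0$, so $p_0 \in \uline{\iota}(\hat{Q})$.

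Second, we check that $(\hat{Q}^{(r_j)}, \iota)$ is an immersed submanifold of $\calM$ itself, not only of $\calM|_U$. Composing with the open inclusion $\calM|_U \hookrightarrow \calM$ preserves injectivity of $\uline{\iota}$, which is the restriction of an injective linear-type inclusion composed with the diffeomorphism $\uline{\phi}^{-1}$. It also preserves injectivity of $T_q\iota$ for every $q$. This is because $T\sigma$ sends $\partial_{y^i}$ to $\partial_{x^i}$, so it is injective, and $\phi^{-1}$ is a diffeomorphism.

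Third, we verify integrality. The graded dimension of $\hat{Q}^{(r_j)}$ equals the rank of $\scrD$ by construction. For tangency we use Lemma \ref{lemma_equivalent_strongly_integral}, criterion 4: every point of $\uline{\iota}(\hat{Q})$ lies in $U$, and on $U$ the frame $\partial_{x^1},\dots,\partial_{x^r}$ of $\scrD(U)$ satisfies $\partial_{y^i} \sim_\iota \partial_{x^i}$. The only computation needed is $\partial_{y^i}\circ\sigma^\ast = \sigma^\ast\circ\partial_{x^i}$, checked on the coordinates $x^j$. This suffices because a vector field is determined by its values on coordinates, and the $\sigma^\ast(x^j)$ are either $y^j$ (or $\theta^j$) for $j \le r$ or $0$ for $j > r$.

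Combining these steps gives the desired integral submanifold through $p_0$. There is no real obstacle here: the substantive work lies in Theorem \ref{thm_local_frob}. The only subtlety is that tangency must be checked for vector fields defined on arbitrary open subsets of $M$, not just on $U$, and Lemma \ref{lemma_equivalent_strongly_integral} handles exactly this local-to-global passage.
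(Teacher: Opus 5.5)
Your proposal is correct and is the paper's argument: the paper proves the corollary in one line by appealing to Example \ref{example_prototype_strongly_integral_submanifold}, where flat coordinates from Theorem \ref{thm_local_frob} yield the slice immersion $\iota=\phi^{-1}\circ\sigma$ through $p_0$. You have made explicit the checks that the example leaves to the reader, namely that $p_0\in\uline{\iota}(\hat{Q})$, that $\iota$ is an immersed submanifold of $\calM$ itself, the dimension count, and tangency on arbitrary open sets via criterion 4 of Lemma \ref{lemma_equivalent_strongly_integral}, and all of these are sound.
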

\begin{proof}
    Follows from the preceding example. Let us note that the converse implication does not hold, see Example \ref{example_integrable_noninvolutive_distr}.
\end{proof}

The rest of the section concerns itself with maximally extending these locally defined integral submanifolds. We have seen that an involutive distribution may admit two non-equivalently immersed fiberwise integral submanifolds. The next proposition shows that this is not the case for integral submanifolds. First a short lemma.

\begin{lemma}\label{lemma_frame_on_function_zero}
    Let $\calM$ be a $\Z$-manifold, $\{X_j\}_{j = 1}^m$ a global frame for $\scrX_\calM(M)$ and let $f \in \cifty_\calM(M)$ be a function such that $X_j(f) = 0$ for all $j \in \{1, \dotsc, m\}$. Then the following holds:
    \begin{enumerate}
        \item If $|f| = 0$, then $f$ is a locally constant function.
        \item If $|f| \neq 0$, then $f = 0$.
    \end{enumerate}
\end{lemma}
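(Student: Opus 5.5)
Since $X_j(f)=0$ for all $j$ is a $\cifty_\calM(M)$-linear condition in the vector field, it holds for every vector field: any $X\in\scrX_\calM(U)$ is locally a $\cifty_\calM$-combination of the restrictions $X_j|_U$. Hence $X(f|_U)=0$ for all $U\in\op(M)$ and all $X\in\scrX_\calM(U)$. In particular, on a coordinate chart $U^{(m_j)}$ with coordinates $\{x^\mu\}_{\mu=1}^{m_0}$ and $\{\xi^a\}_{a=1}^{\hat m}$ we get $\partial_{x^\mu}(f|_U)=0$ and $\partial_{\xi^a}(f|_U)=0$ for all $\mu,a$. The statement is thus reduced to a local computation with formal power series.

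Write $f|_U=\sum_{\bld p} f_{\bld p}\,(\xi^1)^{p_1}\cdots(\xi^{\hat m})^{p_{\hat m}}$. The operator $\partial_{\xi^a}$ acts termwise: on a monomial it lowers the exponent $p_a$ by one and multiplies by $\pm p_a$. For $|\xi^a|$ odd we have $p_a\in\{0,1\}$, so the factor is just a sign. Distinct monomials map to distinct monomials, so $\partial_{\xi^a}f=0$ forces $f_{\bld p}=0$ whenever $p_a\geq1$. Doing this for every $a$ leaves only the $\bld p=\bld 0$ term, so $f|_U=f_{\bld 0}$ is an ordinary smooth function of degree $0$. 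If $|f|\neq0$, no monomial with $\bld p=\bld0$ has degree $|f|$, so $f|_U=0$. Since $U$ ranges over a cover, $f=0$ by the sheaf property, which proves point 2.

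If $|f|=0$, then $f|_U=f_{\bld0}\in\cifty(U)$. The vector fields $\partial_{x^\mu}$ act on it as ordinary partial derivatives, so $\partial_{x^\mu} f_{\bld0}=0$ for all $\mu$. Shrinking $U$ to a connected chart, $f_{\bld 0}$ is constant there. Thus $f$ is locally a constant, i.e. a locally constant function in $\cifty_\calM(M)$ (equivalently, its image under the canonical projection to $\cifty(M)$ is locally constant and $f$ has no higher terms), which is point 1.

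The only delicate step is the termwise action of $\partial_{\xi^a}$ on infinite formal series and the claim that it is injective on monomials with $p_a\geq1$. This I would justify from the definition of the coordinate vector fields in \cite{GTGM}, where $\partial_{\xi^a}$ is defined termwise and is continuous in the natural topology. I would also have to track the combinatorial factor $p_a$, which is a nonzero integer for even $|\xi^a|$ and a sign for odd $|\xi^a|$; in either case the coefficient vanishes only if $f_{\bld p}$ does.
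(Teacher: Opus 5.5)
Your proof is correct and follows essentially the same route as the paper: use the frame to get $\partial_{x^i}(f|_U)=0$ for all coordinate vector fields on a chart, use the nonzero-degree derivatives to show $f|_U$ has no graded part (which already forces $f|_U=0$ when $|f|\neq 0$), and use the degree-zero derivatives to conclude that the remaining smooth coefficient is locally constant. Your version simply spells out the termwise formal power series computation, including the nonzero factor $p_a$ for even coordinates of nonzero degree, which the paper leaves implicit.
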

\begin{proof}
    Let us first elaborate what we mean by saying that $f \in \cifty_\calM(M)$ is locally constant. In $\Z$-graded geometry there is, in general, no canonical way to define a map $\calM \to M$. In other words, there is no canonical way to say that a function $f \in \cifty_\calM(M)$ is ``non-graded''. However, we can still say that a function $f \in \cifty_\calM(M)$ is \textbf{constant}, if it is a real multiple of the unit $1 \in \cifty_\calM(M)$. We say it is \textbf{locally constant} if for every point $p \in M$ there exists $U \in \op_p(M)$ such that $\rest{f}{U}$ is constant. Clearly a locally constant function is constant on every connected component of $M$.

    Now consider some $f$ such that $X_j(f) = 0$ for all $j$, and let $U \in \op(M)$ be some open set with coordinates $\{x^i\}_{i = 1}^m$.  Note that as $\{X_j|_{U}\}_{j = 1}^m$ form a frame for $\scrX_\calM(U)$, it follows that $\partial_{x^j}f|_{U} = 0$ for all $j \in \{1, \dotsc, m\}$. In particular this is true for all $j$ such that $|x^j| \neq 0$, hence in these coordinates $f|_{U}$ has no graded part. In other words, $f|_{U} = \uline{f}|_{U}$. Note that this equality makes sense only on a coordinate patch, see the paragraph above. From the fact that $\partial_{x^j}f_{U} = 0$ for all $j$ such that $|x^j| = 0$ it then follows that $\uline{f}|_{U}$ must be locally constant, and hence so must $f$. The case $|f| \neq 0$ follows the same reasoning. 
\end{proof}

\begin{prop}\label{prop_similarly_immersed_strongly_integral}
Let $(\calR, \iota)$ and $(\calR', \iota')$ be two integral submanifolds of an involutive distribution $\scrD$ on $\calM$ such that $\uline{\iota}(R) = \uline{\iota}'(R')$. Then $(\calR, \iota)$ and $(\calR', \iota')$ are equivalently immersed.
\end{prop}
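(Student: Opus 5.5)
By Lemma \ref{lma_similarly_immersed}, it suffices to work locally. For each point I will find open sets $V \subseteq R$ and $V' \subseteq R'$, with the sets $V$ covering $R$ and the sets $V'$ covering $R'$, such that the restrictions are equivalently immersed. The relevant pairing is determined on the underlying level: $\uline{\iota}$ and $\uline{\iota}'$ are injective with the same image, so $h := \uline{\iota}^{-1} \circ \uline{\iota}' : R' \to R$ is a bijection. The plan is to show that $h$ is locally the underlying map of a graded diffeomorphism $\vartheta$ with $\iota \circ \vartheta = \iota'$. Proposition \ref{prop_injective_immersions} and Lemma \ref{lma_similarly_immersed} then glue these local pieces together.

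Fix $q' \in R'$, put $q := h(q')$ and $p := \uline{\iota}(q)$. Around $p$, choose flat coordinates $(x^1,\dots,x^m)$ for $\scrD$ on $U$ via Theorem \ref{thm_local_frob}, so that $\partial_{x^1},\dots,\partial_{x^r}$ frame $\scrD(U)$. Since $\calR$ is integral, there are $Y_i \in \scrX_\calR(\uline{\iota}^{-1}(U))$ with $Y_i \sim_\iota \partial_{x^i}$. As in the proof of Proposition \ref{prop_equivalently_integral}, these $Y_i$ form a frame for $\scrX_\calR$, and the same holds for the analogous $Y'_i$ on $\calR'$.

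Now consider the transverse coordinate functions $g^\ell := \iota^\ast(x^\ell)$ for $\ell > r$. For every $i \le r$ we have $Y_i(g^\ell) = \iota^\ast(\partial_{x^i} x^\ell) = 0$. Applying Lemma \ref{lemma_frame_on_function_zero} on a connected neighbourhood $V$ of $q$, we get that $g^\ell$ vanishes whenever $|x^\ell| \neq 0$ and is constant, equal to $c^\ell$, whenever $|x^\ell| = 0$. The same constants occur for $\calR'$ near $q'$, because both images pass through $p$. Therefore $\iota|_V$ factors through the slice $\sigma : \hat{Q}^{(r_j)} \to \calM|_U$ of Example \ref{example_prototype_strongly_integral_submanifold}, translated to the level $c$: we have $\iota|_V = \sigma \circ \kappa$, where $\kappa^\ast$ is given on the coordinates $y^i, \theta^a$ by $\kappa^\ast(y^i) = \iota^\ast(x^i)$ for $i \le r$. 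To make this factorisation rigorous, I would check it in coordinates, since a map into a graded domain is determined by the pullbacks of the coordinates.

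The map $\kappa$ is an immersion between manifolds of equal graded dimension, so it is a local diffeomorphism. Concretely, $Y_i(\kappa^\ast y^j) = \delta_i{}^j$, so $\kappa$ relates the frame $Y_i$ to the frame $\partial_{y^i}$ and $T\kappa$ is bijective. Shrinking the neighbourhoods, $\kappa$ becomes a diffeomorphism onto an open set, and likewise $\iota'|_{V'} = \sigma \circ \kappa'$. Then $\vartheta := \kappa^{-1} \circ \kappa'$ satisfies $\iota \circ \vartheta = \iota'$ on $V'$.

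I expect the main obstacle to be topological. The constant-level argument only controls a connected piece of $\calR$ near $q$. I must ensure that $\kappa(V)$ and $\kappa'(V')$ can be taken equal, and that $\uline{\kappa}'$ corresponds to $h$, i.e.\ that $\uline{\iota}(V)$ and $\uline{\iota}'(V')$ are the same plaque. Because the images may return to the chart on other plaques, I would choose $V' := h^{-1}(V)$, cut down by connectedness. This step needs continuity of $h$, which can fail for immersed (non-embedded) submanifolds, as in the figure-eight phenomenon. I would first try to prove continuity using the classical fact that integral manifolds of an involutive underlying foliation are weakly embedded: $\uline{\iota}$ and $\uline{\iota}'$ are integral manifolds of the underlying degree-zero distribution, so smooth maps into a leaf are smooth into it. If that fails, I would assume $V$, $V'$ are chosen so that $\uline{\iota}(V) = \uline{\iota}'(V')$ is a single plaque, and deduce the equality of the slice images from there.
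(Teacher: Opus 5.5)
Your proposal is correct and is essentially the paper's proof. Your $\kappa$ is the paper's $\chi=\pi\circ\phi\circ\iota|_{V}$, and your $\vartheta=\kappa^{-1}\circ\kappa'$ is the same map. The only variation is where Lemma~\ref{lemma_frame_on_function_zero} is applied: you apply it to the transverse functions $\iota^\ast(x^\ell)$, $\ell>r$, of each immersion separately, which factors both through the slice $\sigma_c$. The paper instead applies it to the differences $(\iota\circ\vartheta)^\ast(x^j)-(\iota')^\ast(x^j)$.

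The topological obstacle you anticipate does not arise. By your own constant-level argument, any connected $V\ni q$ and $V'\ni q'$ already lie on the slice through $p$. So you can shrink them independently until $\uline{\kappa}(V)=\uline{\kappa'}(V')$, as the paper does with $W,W'$. Then $\uline{\vartheta}=h|_{V'}$ follows from $\iota\circ\vartheta=\iota'$ and the injectivity of $\uline{\iota}$. Continuity of $h$ is therefore a consequence rather than a prerequisite, and the weak-embeddedness detour is unnecessary.
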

\begin{proof}
    The idea of the proof is to locally relate both submanifolds to our template submanifold from Example \ref{example_prototype_strongly_integral_submanifold} and use this to construct a local diffeomorphism between them. Let us start by fixing a point $p_0 \in \uline{\iota}(R) = \uline{\iota'}(R')$. Let $U$ be some open neighborhood of $p_0 \in M$ such that there is a coordinate chart
    \begin{equation}
        \phi : \rest{\calM}{U} \to \hat{U}^{(m_j)},
    \end{equation}
    giving rise to flat coordinates for $\scrD$ on $U$, where $\hat{U}:= \uline{\phi}(U)$ is a cube and $\uline{\phi}(p_0) = \bld{0} \in \R^{m_0}$. The existence of this is ensured by the Local Frobenius theorem. Let us denote $V := \uline{\iota}^{-1}(U)$, $V' := \uline{\iota'}^{-1}(U)$ and carry over the rest of the notation from Example \ref{example_prototype_strongly_integral_submanifold}. The stage has been set:
    \begin{equation}
    \begin{tikzcd}
    {}&
	   {\rest{\calM}{U}}
	   \arrow[rr, "{\phi}"]&
	       {}&
	       {\hat{U}^{(m_j)}}
	       \arrow[d, "{\pi}"] \\
    {\rest{\calR}{V}}
    \arrow[ru, "{\rest{\iota}{V}}"]&
	   {}&
	   {\rest{\calR'}{V'}}
	   \arrow[lu, "{\rest{\iota'}{V'}}"']&
	       {\hat{Q}^{(r_{j})}}
    \end{tikzcd}.
    \end{equation}
    Here $\pi : \hat{U}^{(m_j)} \to \hat{Q}^{(r_{j})}$ denotes the projection defined by $\pi^\ast(y^i) = x^i$ for all $i \leq r$, where $\{y^i\}_{i = 1}^r$ are coordinates on $\hat{Q}^{(r_{j})}$ as in Example \ref{example_prototype_strongly_integral_submanifold}. Note that $\partial_{x^i} \sim_\pi \partial_{y^i}$ for all $i \leq r$.
    
    Denote $\chi \coloneq \pi \circ \phi \circ \rest{\iota}{V}$ and let us argue that $\chi$ is a local diffeomorphism at $q_0 := \uline{\iota}^{-1}(p_0)$. Since the tangent spaces $T_{q_0}\calR$ and $T_{\uline{\chi}(q_0)}\hat{Q}^{(r_{j})}$ have the same dimension $(r_{-j})$, it is enough to show that $T_{q_0}\chi$ is surjective. 
    As $\{x^i\}_{i = 1}^m$ are flat coordinates for $\scrD$, there is, for every $i \in \{1, \dotsc, r\}$, a vector field $Y_i \in \scrX_\calR(V)$ such that $Y_i \sim_{\iota} \partial_{x^i}$. 
    But this implies $Y_i \sim_{\chi}\partial_{y^i}$, hence in particular $(T_{q_0}\chi)(\rest{Y_i}{q_0}) = \partial_{y^i}|_{\uline{\chi}(q_0)}$. This shows surjectivity of $T_{q_0}\chi$, hence $\chi$ is a local diffeomorphism at $q_0$. 
    Similarly, $\chi' \coloneq \pi \circ \phi \circ \iota'|_{V'}$ is a local diffeomorphism at $q'_0 := \uline{\iota'}^{-1}(p_0)$. 
    
    By the $\Z$-graded inverse function theorem \cite[Theorem 4.30]{GTGM}, there exist $W \in \op_{q_0}(V)$, $W' \in \op_{q'_0}(V')$ such that $\uline{\chi}(W)$, $\uline{\chi'}(W')$ are open subsets of $\hat{Q}$, and both $\chi|_{W}$ and $\chi'|_{W'}$ are diffeomorphisms onto their image. Since $\uline{\chi}(W) \cap \uline{\chi'}(W')$ is an open subset of $\hat{Q}$ containing the point $\uline{\chi}(q_0) = \uline{\chi'}(q'_0)$, we may restrict $W$ and $W'$ if necessary, to assume that $\uline{\chi}(W) = \uline{\chi'}(W')$. Without loss of generality, let us also assume that $W$ and $W'$ are connected (we will use this at the end of the proof). We define $\vartheta : \calR'|_{W'} \to \calR|_{W}$ as the composite diffeomorphism
    \begin{equation}
        \begin{tikzcd}
            \calR'|_{W'} 
            \arrow[rr, bend right = 25, "{\vartheta}"]
            \arrow[r, "{\chi'|_{W'}}"]
                &[2em] \hat{Q}^{(r_{j})}|_{\uline{\chi}(W)}
                    \arrow[r, "{\chi|_{W}^{-1}}"]
                    &[2em] \calR|_{W}
        \end{tikzcd}.
    \end{equation}
    It remains to be argued that $\iota|_{W} \circ \vartheta = \iota'|_{W'}$. The proof will then follow from Lemma \ref{lma_similarly_immersed} and the fact that $p_0 \in \uline{\iota}(R) = \uline{\iota'}(R')$ was arbitrary. 
    
    Let us return to the vector fields $Y_i \in \scrX_{\calR}(V)$ which are $\chi$-related to $\partial_{y^i}$. Since $\chi|_{W}$ is a diffeomorphism, it follows that $\{Y_{i}|_{W}\}_{i=1}^r$ actually form a frame for $\scrX_{\calR}(W)$. Similarly, there are $Y'_i \in \scrX_{\calR'}(V')$ such that $Y'_i \sim_{\chi'}\partial_{y^i}$ and $\{Y'_i|_{W'}\}_{i=1}^r$ is a frame for $\scrX_{\calR'}(W')$. It follows that $Y'_i|_{W'} \sim_{\vartheta} Y_i|_{W}$, hence there is both
    \begin{equation}
       Y'_i|_{W'} \sim_{\iota'|_{W'}} \partial_{x^i}, \qquad \text{and} \qquad  Y'_i|_{W'} \sim_{\iota|_{W} \circ \vartheta} \partial_{x^i},
    \end{equation}
    for all $i \in \{1, \dotsc, r\}$. Consequently, for any $i \in \{1, \dotsc, r\}$ and $j \in \{1, \dotsc, m\}$ there is
    \begin{equation}
        Y'_i|_{W'} \left((\iota|_{W}\circ \vartheta)^\ast(x^j) - (\iota'|_{W'})^\ast(x^j) \right) = (\iota|_{W}\circ \vartheta)^\ast(\partial_{x^i}(x^j)) - (\iota'|_{W'})^\ast(\partial_{x^i}(x^j)) = \delta_i^j - \delta_i^j = 0.
    \end{equation}
    Since $\{Y'_i|_{W'}\}_{i=1}^r$ is a frame for $\scrX_{\calR'}(W')$, from Lemma \ref{lemma_frame_on_function_zero} it follows that
    \begin{equation}
        (\iota|_{W}\circ \vartheta)^\ast(x^j) - (\iota'|_{W'})^\ast(x^j) = c^{j},
    \end{equation}
    for every $j \in \{1, \dotsc, m\}$, where $c^j = 0$ if $|x^j| \neq 0$ and $c^{j}$ is a locally constant function if $|x^j| = 0$. In this case, we use the fact that $W'$ is connected and $c^{j}(q'_0) = 0$ to argue that $c^j = 0$ for all $j \in \{1, \dotsc, m\}$. Since $\{x^j\}_{j=1}^m$ are global coordinates on $\calM|_{U}$, necessarily $\iota|_{W}\circ \vartheta = \iota'|_{W'}$, as was to be shown.
    \end{proof}

In Corollary \ref{cor_involutive_integrable} we have seen that involutive distributions are integrable. Later in this section (Proposition \ref{prop_underlying_distribution}) we will see that an involutive distribution $\scrD$ on $\calM$ gives rise to an ``underlying'' distribution $D$ on $M$, which is also involutive and hence integrable. Our aim in the Global Frobenius theorem (Theorem \ref{thm_global_frob}) is to find a graded structure on the leaves of the foliation of $M$ given by this underlying distribution $D$. To this end, we now show the local existence of integral submanifolds $(\calR, \iota)$ of $\scrD$, such that $R$ is a subset of $M$ and $\uline{\iota}$ is the inclusion map of $R$ into $M$.

\begin{lemma}\label{lemma_graded_smooth_structure_from_domain}
Let $(n_j)$ be a sequence of non-negative integers, with finitely many nonzero entries. Let $U$ be a smooth manifold, $\hat{U} \subseteq \R^{n_0}$ and $F : \hat{U} \to U$ a diffeomorphism. Then there exists a sheaf $\cifty_\calU$ on $U$ making $\calU := (U, \cifty_{\calU})$ into a $\Z$-manifold, such that there is a diffeomorphism $\varphi : \calU \to \hat{U}^{(n_j)} $ with $\uline{\varphi}=F^{-1}$, and the original smooth structure on $U$ is the same as the one of the underlying smooth manifold of $\calU$.
\end{lemma}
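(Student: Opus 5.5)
The structure sheaf will be transported along $F$: define $\cifty_\calU$ as the pushforward of the model sheaf $\cifty_{(n_j)}$ on $\hat{U}$ along the homeomorphism $F$. Concretely, for $V \in \op(U)$ we set
\begin{equation}
\cifty_\calU(V) := \cifty_{(n_j)}(F^{-1}(V)),
\end{equation}
and the restriction maps are those of $\cifty_{(n_j)}$. Since $F$ is a homeomorphism, $V \mapsto F^{-1}(V)$ is a bijection $\op(U) \to \op(\hat{U})$ preserving inclusions, intersections and unions, so $F_\ast \cifty_{(n_j)}$ is again a sheaf of graded commutative algebras on $U$. It is also locally ringed, because its stalk at $p$ is the stalk of $\cifty_{(n_j)}$ at $F^{-1}(p)$.

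The pair $\calU = (U, \cifty_\calU)$ is then a $\Z$-manifold with a single global chart. The chart is $\varphi := (F^{-1}, \varphi^\ast)$, where for $W \in \op(\hat{U})$ the map $\varphi^\ast : \cifty_{(n_j)}(W) \to \cifty_\calU(F(W)) = \cifty_{(n_j)}(W)$ is the identity. This is an isomorphism of locally ringed spaces $\calU \to \hat{U}^{(n_j)}$ with $\uline{\varphi} = F^{-1}$, and its inverse is $(F, \id)$. Being globally isomorphic to a graded domain, $\calU$ is locally modelled on $\hat{U}^{(n_j)}$ as the definition in \cite{GTGM} requires. Second countability and Hausdorffness of $U$ are inherited from $\hat{U}$ through the homeomorphism, or are simply part of the assumption that $U$ is a smooth manifold.

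It remains to see that the underlying smooth structure of $\calU$ agrees with the given one. In the $\Z$-graded setting, the smooth structure on the base is the one induced by the body map: a function is smooth exactly when it is the body $\uline{f}$ of some degree-zero $f \in \cifty_\calU(V)$. Equivalently, the smooth atlas is formed by the underlying maps of graded charts. Here the only chart, up to compatible refinement, is $\uline{\varphi} = F^{-1}$, and the body of $f \in \cifty_{(n_j)}(F^{-1}(V))$ is the coefficient $f_{\bld{0}} \in \cifty(F^{-1}(V))$. So the smooth functions on $V$ for $\calU$ are exactly the functions $g \circ F^{-1}$ with $g$ smooth on $F^{-1}(V)$. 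Because $F$ is a diffeomorphism for the original structure, these are precisely the originally smooth functions on $V$.

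The main, though mild, obstacle is this last step: one has to be precise about which convention for the induced smooth structure is used, whether through bodies of degree-zero functions or through the atlas of underlying chart maps, and check that the two agree. I would state the chart-based version and note that the function-based version follows because the body of a degree-zero function is compatible with restriction.
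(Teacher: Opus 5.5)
Your proposal is correct and matches the paper's proof. The paper likewise sets $\cifty_\calU := F_\ast(\cifty_{(n_j)})$, takes $\uline{\varphi} := F^{-1}$ and $\varphi^\ast := \id$, and treats $\varphi$ as a global chart; it just packages this as a special case of the gluing theorem \cite[Proposition 3.33]{GTGM} instead of checking the axioms and the induced smooth structure by hand as you do.
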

\begin{proof}
    This is a very special case of one of the gluing theorems for $\Z$-manifolds, \cite[Proposition 3.33]{GTGM}. In essence, we set $\cifty_{\calU} := F_\ast(\cifty_{(n_j)})$, choose $\uline{\varphi} := F^{-1}$, $\varphi^\ast := \id$ and view $\varphi$ as a global chart for $\calU$.
\end{proof}

\begin{prop}\label{prop_local_strongly_integral_submanifolds}
Let $\scrD$ be an involutive distribution on $\calM$. Then for every point $p \in M$ there exists an integral submanifold $(\calR, \iota)$ for $\scrD$ such that $p \in R \subseteq M$ and $\uline{\iota}$ is the inclusion map.
\end{prop}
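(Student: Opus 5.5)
Starting from the local prototype of Example \ref{example_prototype_strongly_integral_submanifold}, I will use Lemma \ref{lemma_graded_smooth_structure_from_domain} to transport its graded structure onto a genuine subset of $M$. Fix $p \in M$. By the Local Frobenius theorem (Theorem \ref{thm_local_frob}) there is a chart $\phi : \calM|_U \to \hat{U}^{(m_j)}$ with flat coordinates for $\scrD$, $\hat{U}$ a cube and $\uline{\phi}(p) = \bld{0}$. The example then gives $\hat{Q} = \pr_{r_0}(\hat{U})$, the map $\sigma : \hat{Q}^{(r_j)} \to \hat{U}^{(m_j)}$, and the integral submanifold $(\hat{Q}^{(r_j)}, \iota_0)$ with $\iota_0 := \phi^{-1}\circ\sigma$. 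Its underlying map $\uline{\iota_0} = \uline{\phi}^{-1}\circ\uline{\sigma}$ is a smooth embedding of $\hat{Q}$ into $M$, since $\uline{\sigma}$ is the inclusion $s \mapsto (s,0)$ onto a slice of the cube and $\uline{\phi}^{-1}$ is a diffeomorphism onto $U$.

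Set $R := \uline{\iota_0}(\hat{Q}) \subseteq U \subseteq M$. As an embedded submanifold of $M$ it carries a smooth structure for which $F := \uline{\iota_0} : \hat{Q} \to R$ is a diffeomorphism, and $p = F(\bld{0}) \in R$. Lemma \ref{lemma_graded_smooth_structure_from_domain}, applied with $(n_j) := (r_j)$, produces a $\Z$-manifold $\calR = (R, \cifty_\calR)$ and a graded diffeomorphism $\varphi : \calR \to \hat{Q}^{(r_j)}$ with $\uline{\varphi} = F^{-1}$. Define $\iota := \iota_0 \circ \varphi : \calR \to \calM$. Then $\uline{\iota} = F \circ F^{-1}$ is the inclusion $R \hookrightarrow M$, which is injective. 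Moreover, $T_q\iota = T\iota_0 \circ T_q\varphi$ is injective for every $q \in R$, so $(\calR, \iota)$ is an immersed (indeed embedded) submanifold.

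It remains to show that $(\calR,\iota)$ is integral. Since $\varphi$ is a diffeomorphism, $\varphi^{-1} : \hat{Q}^{(r_j)} \to \calR$ satisfies $\iota \circ \varphi^{-1} = \iota_0$. Hence $(\calR,\iota)$ and $(\hat{Q}^{(r_j)},\iota_0)$ are equivalently immersed in the sense of Definition \ref{def_equivalent_submanifolds}, and Lemma \ref{lemma_strongly_weakly_integral} transfers integrality. This step can also be checked directly: the dimension of $\calR$ is $(r_j)$, and the vector fields $\varphi^{-1}_\ast\partial_{y^i}$ are $\iota$-related to $\partial_{x^i}$, so the frame of $\scrD(U)$ is tangent to $\calR$. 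Lemma \ref{lemma_equivalent_strongly_integral} (point 4, noting that $\uline{\iota}(R) \subseteq U$) then gives tangency of all of $\scrD$.

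The only step needing some care is making sure $\uline{\iota_0}$ is really an embedding, so that $R$ with its subspace topology is a smooth manifold diffeomorphic to $\hat{Q}$. This is immediate here because the slice map of a cube is a closed embedding into the cube. Everything else is bookkeeping with earlier results.
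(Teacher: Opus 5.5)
Your proposal is correct and follows essentially the same route as the paper. You take $R$ to be the image under $\uline{\phi}^{-1}\circ\uline{\sigma}$ of the flat slice from Example~\ref{example_prototype_strongly_integral_submanifold}, and transport the graded structure onto $R$ via Lemma~\ref{lemma_graded_smooth_structure_from_domain}. You then set $\iota := \phi^{-1}\circ\sigma\circ\varphi$ and obtain integrality from Lemma~\ref{lemma_strongly_weakly_integral}, because the commuting triangle makes $(\calR,\iota)$ equivalently immersed with the prototype.
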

\begin{proof}
    Let $p \in M$, and consider the integral submanifold $(Q^{(r_{j})}, \iota)$ from Example \ref{example_prototype_strongly_integral_submanifold}, along with the notation set therein. Recall the situation:
    \begin{equation}
        \begin{tikzcd}
            \rest{\calM}{U} \arrow[r, "{\phi}"]
            & \hat{U}^{(m_j)}
            & \hat{Q}^{(r_{j})} \arrow[l, "{\sigma}"']
        \end{tikzcd},
    \end{equation}
    where recall that $\hat{U}$ is a cube centered around $\mathbf{0} \in \R^{m_0}$ and $\hat{Q} \subseteq \R^{r_0}$ arises as the projection of $\hat{U}$ to first $r_0$ entries. Furthermore, $\uline{\sigma}(\hat{Q})$ is the $r_0$-dimensional slice of $\hat{U}$ given by $x^{r_0+1} = \cdots = x^m = 0$, hence an embedded submanifold of $\hat{U}$. As $\uline{\phi} : U \to \hat{U}$ is a diffeomorphism, $R := \uline{\phi}^{-1}(\uline{\sigma}(\hat{Q}))$ is an embedded submanifold of $U$, hence an immersed submanifold of $M$. 
    In addition, $F := \uline{\phi}^{-1} \circ \uline{\sigma} : \hat{Q} \to R \subseteq M$ is a diffeomorphism, whose inverse is $F^{-1} = \uline{\pi} \circ \uline{\phi}|_{R}$ where $\pi : \hat{U}^{(m_j)} \to \hat{Q}^{(r_j)}$ is the projection, as in the proof of Proposition \ref{prop_similarly_immersed_strongly_integral}.
    
    By Lemma \ref{lemma_graded_smooth_structure_from_domain} there exists a graded manifold $\calR = (R, \cifty_\calR)$ and a graded diffeomorphism $\varphi : \calR \to \hat{Q}^{(r_j)}$, such that $\uline{\varphi} = F^{-1} = \uline{\pi} \circ \uline{\phi}|_{R}$. By denoting $\iota := \phi^{-1} \circ \sigma \circ \varphi$ we find that $(\calR, \iota)$ is an immersed submanifold of $\calM$ such that $\uline{\iota}$ is the inclusion map. Furthermore, as $(\hat{Q}^{(r_{j})}, \phi^{-1} \circ \sigma)$ is an integral submanifold for $\scrD$, and by definition
    \begin{equation}
        \begin{tikzcd}
        {}
        	&{\rest{\calM}{U}}
        		&{} \\
        \calR
        \arrow[ru, "{\iota}"]
        \arrow[rr, "{\varphi}"]
        	&{}
        		&{\hat{Q}^{(r_{j})}}
        		\arrow[lu, "{\phi^{-1} \circ \sigma}"']
        \end{tikzcd}
\end{equation}
commutes, $(\calR, \iota)$ is also an integral submanifold for $\scrD$, see Lemma \ref{lemma_strongly_weakly_integral}.
\end{proof}

\begin{prop}\label{prop_underlying_distribution}
Let $\scrD$ be a distribution on $\calM$. Then the assignment $U \mapsto D(U)$ where $D(U) \coloneq \{\uline{X}  \mid  X \in \scrD(U), \ |X| = 0\}$, for any $U \in \op(M)$, is a distribution on $M$, called the underlying distribution of $\scrD$. Moreover, if $(\calR, \iota)$ is a fiberwise integral submanifold for $\scrD$, then $(R, \uline{\iota})$ is an integral submanifold for $D$. Furthermore, if $\scrD$ is involutive, then so is $D$.
\end{prop}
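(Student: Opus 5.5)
The proof has three parts: show that $D$ is a subbundle of $TM$, then handle fiberwise integral submanifolds, then involutivity. Throughout I use the underlying vector field $\uline{X}$ of a degree zero graded vector field $X$, defined by $\uline{X}(\uline{f}) := \uline{X(f)}$ for degree zero $f$. Recall that $\uline{f}$ is the body of $f$, i.e. its value at points, and that $X \mapsto \uline{X}$ is $\R$-linear, $\uline{gX} = \uline{g}\,\uline{X}$ for $|g| = 0$, and $\uline{[X,Y]} = [\uline{X},\uline{Y}]$ for $|X| = |Y| = 0$. The last identity holds because $\uline{X(Y f)} = \uline{X}(\uline{Yf}) = \uline{X}(\uline{Y}(\uline{f}))$.

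\textbf{Step 1: $D$ is a subbundle.} Fix $p \in M$ and a local frame for $\scrD$ around $p$. Following Lemma \ref{lemma_linearly_indep_vectors}, complete it to a local frame $X_1, \dotsc, X_m$ of $\scrX_\calM$ on some $U$, where the first $r$ fields span $\scrD$. Order the frame so that $X_1, \dotsc, X_{r_0}$ have degree zero, and so that the degree zero members of the full frame are exactly $m_0$ fields. A degree zero $X \in \scrD(U)$ can be written $X = \sum_k f^k X_k$ with $|f^k| = -|X_k|$. Only the $k$ with $|X_k| = 0$ contribute to $\uline{X}$, since for $|X_k| \neq 0$ the coefficient $f^k$ has nonzero degree and hence $\uline{f^k} = 0$. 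So $D(U) = \mathrm{span}_{\cifty(U)}\{\uline{X_1}, \dotsc, \uline{X_{r_0}}\}$; surjectivity holds because any smooth $g^k$ lifts to degree zero functions.

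The fields $\uline{X_1}, \dotsc, \uline{X_{m_0}}$ are pointwise linearly independent. To see this, look at the degree zero part of the fiber $T_q\calM$, which is identified with $T_qM$ via $\partial_{x^\mu}|_q \mapsto \partial_{x^\mu}|_q$. Under this identification $X_k|_q$ goes to $\uline{X_k}|_q$: expanding $X_k$ in coordinates, the degree zero coefficients of the $\partial_{x^\mu}$ have bodies equal to the coefficients of $\uline{X_k}$. Linear independence then follows from Lemma \ref{lemma_linearly_indep_vectors}. Hence $\uline{X_1}, \dotsc, \uline{X_{r_0}}$ is a local frame of a rank $r_0$ subbundle, and the sheaf condition is clear. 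The delicate point is that $D(U)$ is defined on arbitrary $U$, not only on framed opens. I would handle this with a partition of unity: a degree zero graded vector field in $\scrD(U)$ is glued from local lifts $\sum_k g^k X_k$ with degree zero lifts $g^k$. I expect this step, together with the identification of $T_q\calM$ in degree zero with $T_qM$, to be the main technical obstacle. It is routine but has to be stated carefully.

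\textbf{Step 2: fiberwise integral submanifolds.} Let $(\calR, \iota)$ be fiberwise integral, so $(T_p\iota)(T_p\calR) = \scrD_{\uline{\iota}(p)}$ for each $p \in R$. Take the degree zero part of both sides. The degree zero part of $T_p\calR$ is $T_pR$, the degree zero part of $T_p\iota$ is $T_p\uline{\iota}$ (check in coordinates: $\uline{\partial_{y^\nu}\iota^\ast x^\mu}$ gives the Jacobian of $\uline{\iota}$), and the degree zero part of $\scrD_{\uline{\iota}(p)}$ is $D_{\uline{\iota}(p)}$ by Step 1. This gives $T_p\uline{\iota}(T_pR) = D_{\uline{\iota}(p)}$. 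Since $T_p\iota$ is injective, so is its degree zero part, and therefore $\uline{\iota}$ is an injective immersion. In the ordinary setting, matching tangent spaces is exactly the definition of an integral submanifold, so $(R, \uline{\iota})$ is an integral submanifold for $D$.

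\textbf{Step 3: involutivity.} Let $\uline{X}, \uline{Y} \in D(U)$ with degree zero lifts $X, Y \in \scrD(U)$. By involutivity $[X,Y] \in \scrD(U)$, and it has degree zero, so $[\uline{X}, \uline{Y}] = \uline{[X,Y]} \in D(U)$.
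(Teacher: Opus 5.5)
Your proposal is correct and follows essentially the same route as the paper. Your identification of the degree-zero part of $T_q\calM$ with $T_qM$ is precisely the injective tangent map $T_q\imath_M$ of the canonical embedding $\imath_M : M \to \calM$, and your defining property $\uline{X}(\uline{f}) = \uline{X(f)}$ is the relation $\uline{X} \sim_{\imath_M} X$. The remaining differences are cosmetic: in Step 2 you get equality of the degree-zero images directly rather than via inclusion plus a dimension count, you verify $\uline{[X,Y]} = [\uline{X},\uline{Y}]$ by direct computation rather than through uniqueness of $\imath_M$-related fields (Proposition~\ref{prop_injective_immersions}), and you explicitly handle, via a partition of unity, that $D(U)$ is the full space of sections over arbitrary $U$, a point the paper passes over.
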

\begin{proof}
    Let $\imath_R : R \to \calR$ and $\imath_M : M \to \calM$ denote the canonical embeddings of underlying manifolds.
    Let $(r_{-j})$ be the rank of $\scrD$ and let $\{X_i\}_{i = 1}^r$ be a local frame for $\scrD$, indexed so that $\{X_i\}_{i = 1}^{r_0}$ are all degree zero. Then $\{\uline{X_i}\}_{i = 1}^{r_0}$ locally generate $D$, see equation (\ref{eq_cifty_comb_related_VFs}) together with the fact that $\imath_M^\ast$ is surjective. Furthermore, as the tangent map $T_p\imath_M$ is injective at every point $p \in M$, the collection $\{\uline{X_i}\}_{i = 1}^{r_0}$ is a local frame for $D$. Consequently, $D$ is a rank $r_0$ distribution on the smooth manifold $M$.

    Let $(\calR, \iota)$ be a fiberwise integral submanifold for $\scrD$, and let us draw the commutative diagram
    \begin{equation}\label{eq_underlying_distr_diag}
        \begin{tikzcd}
            \calR
            \arrow[r, "{\iota}"]
                & \calM \\
            R
            \arrow[u, "{\imath_R}"]
            \arrow[r, "{\uline{\iota}}"]
                & M
                \arrow[u, "{\imath_M}"]
        \end{tikzcd}.
    \end{equation}
    It follows that $(R, \uline{\iota})$ is an integral submanifold for $D$. Indeed, consider some tangent vector $v \in T_p R$ for some $p \in R$. From the commutativity of (\ref{eq_underlying_distr_diag}) we see that $(T_{p}(\imath_M \circ \uline{\iota}))(v) \in \scrD_{\uline{\iota}(p)}$, hence $(T_p\uline{\iota})(v) \in D_{(\uline{\iota}(p))}$ by injectivity of $T_{\uline{\iota}(p)}\imath_M$. By injectivity of $T_p \uline{\iota}$ and the fact that the dimension of $R$ equals the rank of $D$, it follows that $(R, \uline{\iota})$ is an integral submanifold for $D$.

    Finally, if $\scrD$ is involutive, then consider some $X,Y \in \scrD(M)$. By definition of $D$, there is $\uline{[X,Y]} \in D(M)$. By definition of the underlying vector field, $\uline{[X,Y]} \sim_{\imath_M} [X,Y]$. But there is also $[\uline{X}, \uline{Y}] \sim_{\imath_M} [X,Y]$, hence by Proposition \ref{prop_injective_immersions} there holds $[\uline{X}, \uline{Y}] = \uline{[X,Y]} \in D(M)$. This shows $D$ to be involutive.
\end{proof}

\begin{lemma}\label{lemma_intersection_of_integral_submanifolds}
    Let $M$ be a smooth manifold, $D$ a smooth involutive distribution on $M$ and $R,R' \subseteq M$ integral submanifolds for $D$. Then $R \cap R'$ is an open subset of $R$ (in the topology of $R$).
\end{lemma}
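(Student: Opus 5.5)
We work entirely in the classical smooth setting, using flat (Frobenius) charts of the involutive distribution $D$ on $M$. Here $R, R' \subseteq M$ are immersed submanifolds whose inclusions are immersions with $T_pR = D_p$ for all $p \in R$, and similarly for $R'$. We must show that every point $p \in R \cap R'$ has a neighbourhood in $R$ (in the topology of $R$) contained in $R'$.

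Fix such a $p$. By the classical Frobenius theorem, choose a flat chart $(W, x^1, \dotsc, x^m)$ centered at $p$ whose image is a cube. In this chart, $D$ is spanned by $\partial_{x^1}, \dotsc, \partial_{x^{k}}$, and the integral manifolds of $D$ inside $W$ are locally contained in slices $x^{k+1} = c^{k+1}, \dotsc, x^m = c^m$.

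\textbf{Step 1: connected pieces of integral submanifolds lie in single slices.} The key lemma is the standard one: if $N$ is any connected integral submanifold of $D$ with $N \subseteq W$, then $N$ lies in a single slice. Indeed, the functions $x^{k+1}, \dotsc, x^m$ restricted to $N$ have differentials annihilating $TN = D|_N$, so they are locally constant on $N$. Since $N$ is connected, they are constant.

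\textbf{Step 2: apply Step 1 to $R$ and $R'$.} The inclusion $R \hookrightarrow M$ is continuous, so the connected component $V$ of $R \cap W$ containing $p$ is open in $R$ and connected. By Step 1, $V$ lies in the slice $S$ through $p$, namely $x^{k+1} = \cdots = x^m = 0$. Since the inclusion $V \hookrightarrow S$ is a smooth injective immersion between manifolds of the same dimension $k$ (smoothness into the embedded submanifold $S$ holds because $S$ is embedded), it is a local diffeomorphism and hence an open map. Therefore $V$ is an open subset of $S$ containing $p$. The same argument gives a connected set $V'$, open in $R'$, that is also an open subset of $S$ containing $p$.

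\textbf{Step 3: transfer openness back to $R$.} The set $V \cap V'$ is open in $S$ and contains $p$. Since $V \hookrightarrow S$ is a diffeomorphism onto its open image (by invariance of smooth structure through the local diffeomorphism above), the topology of $V$ coincides with the subspace topology from $S$. Hence $V \cap V'$ is open in $V$, and therefore open in $R$, and it lies in $R'$. This is the required neighbourhood of $p$ in $R$, which proves that $R \cap R'$ is open in $R$.

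\textbf{Main subtlety.} The obstacle is that $R$ is only immersed, so its topology may differ from the subspace topology. We handle this by restricting to the connected component $V$ of $R \cap W$ in $R$'s own topology, which is legitimate because the inclusion is continuous. We then use the fact that an injective immersion into a slice of equal dimension is open, which lets us compare the topologies of $V$ and $S$ directly.
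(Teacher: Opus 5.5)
Your proof is correct and follows the same route as the paper: take a flat chart with cube image centred at $p$, show that the connected pieces of $R$ and $R'$ through $p$ are open subsets of the same slice, and intersect them to get a neighbourhood of $p$ in $R$ contained in $R'$. The only difference is that you derive the needed local slice structure directly (your Steps 1--2), whereas the paper cites it from Lee's Proposition 19.16.
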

\begin{proof}
    Let $r$ be the rank of $D$. Consider any point $p \in R \cap R'$ and some $U \in \op_p(M)$ such that $(U, \phi)$ is a chart for $M$ such that $\{x^i\}_{i = 1}^{\dim M}$ are flat coordinates for $D$ and $\phi(U)$ is a cube centered at $\phi(p) = 0$. It is a classical result, see e.g. \cite[Proposition 19.16]{ItSM} that (the image under $\phi$ of) both $R \cap U$ and $R' \cap U$ is an at most countable union of open subsets of parallel $r$-dimensional slices of $\phi(U)$. Let $V, V'$ be the components of $\phi(R \cap U)$ and $\phi(R' \cap U)$ containing $\phi(p)$. It then follows, also from \cite[Proposition 19.16]{ItSM}, that $\phi^{-1}(V \cap V')$ is an open subset of both $R$ and $R^\prime$. As $p$ was arbitrary, this finishes the proof.
\end{proof}

\begin{prop}\label{prop_globally_strongly_integral}
Let $\scrD$ be an involutive distribution on $\calM$, let $D$ be its underlying distribution and let $R \subseteq M$ be an integral submanifold for $D$. Then there is an integral submanifold $(\calR, \iota)$ for $\scrD$ with $R$ as its underlying smooth manifold, such that $\uline{\iota}$ is the inclusion map.
\end{prop}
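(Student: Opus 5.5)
The plan is to build the graded structure on $R$ by gluing the local integral submanifolds of Proposition \ref{prop_local_strongly_integral_submanifolds}, using Proposition \ref{prop_similarly_immersed_strongly_integral} to get compatible transition maps.

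\textbf{Local pieces.} For each $p \in R$, Proposition \ref{prop_local_strongly_integral_submanifolds} gives an integral submanifold $(\calR_p, \iota_p)$ for $\scrD$ with $p \in R_p \subseteq M$ and $\uline{\iota_p}$ the inclusion. By Proposition \ref{prop_underlying_distribution}, $R_p$ is an integral submanifold for $D$. By Lemma \ref{lemma_intersection_of_integral_submanifolds}, $R \cap R_p$ is open in both $R$ and $R_p$. Shrinking $R_p$ to the connected component of $R \cap R_p$ containing $p$, we obtain an open subset $O_p \subseteq R$ containing $p$ and an integral submanifold $(\calR_p|_{O_p}, \iota_p|_{O_p})$ (restriction keeps integrality). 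I also need the smooth structure on $O_p$ inherited from $\calR_p$ to agree with that from $R$. Both are integral manifolds of $D$ with the same underlying set and the same inclusion, so this should follow from the flat-chart slice description cited in the proof of Lemma \ref{lemma_intersection_of_integral_submanifolds}.

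\textbf{Gluing.} On an overlap $O_{pq} = O_p \cap O_q$, the restrictions $(\calR_p|_{O_{pq}}, \iota_p)$ and $(\calR_q|_{O_{pq}}, \iota_q)$ are integral submanifolds with the same image $O_{pq}$. By Proposition \ref{prop_similarly_immersed_strongly_integral} there is a graded diffeomorphism $\vartheta_{pq}$ with $\iota_p \circ \vartheta_{pq} = \iota_q$. Since $\uline{\iota_p}$ and $\uline{\iota_q}$ are inclusions, $\uline{\vartheta_{pq}} = \id$, so $\vartheta_{pq}$ is an isomorphism of structure sheaves over $O_{pq}$. Cocycle conditions follow from uniqueness: $\iota_p$ is a monomorphism by Proposition \ref{prop_injective_immersions}, and both $\vartheta_{pq}\circ\vartheta_{qs}$ and $\vartheta_{ps}$ satisfy $\iota_p \circ (\cdot) = \iota_s$, hence they coincide. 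The gluing of sheaves (or \cite[Proposition 3.33]{GTGM}) then yields a sheaf $\cifty_\calR$ on $R$ with isomorphisms $\cifty_\calR|_{O_p} \cong \cifty_{\calR_p}|_{O_p}$. Thus $\calR = (R, \cifty_\calR)$ is a $\Z$-manifold whose underlying manifold is $R$. Hausdorffness and second countability come from $R$ itself.

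\textbf{The immersion.} Define $\iota|_{O_p}$ as $\iota_p$ composed with the identification. These local maps agree on overlaps by the relation $\iota_p\circ\vartheta_{pq}=\iota_q$, so they glue to a graded smooth map $\iota : \calR \to \calM$ with $\uline{\iota}$ the inclusion. Then $\iota$ is an immersion, being locally so, and $\uline{\iota}$ is injective. Each restriction $(\calR|_{O_p}, \iota|_{O_p})$ is equivalently immersed to an integral submanifold, so it is integral by Lemma \ref{lemma_strongly_weakly_integral}. Lemma \ref{lemma_locally_strongly_integral} then gives that $(\calR, \iota)$ is an integral submanifold for $\scrD$.

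I expect the main obstacle to be the first step: ensuring the local pieces really sit in $R$ as open subsets with the correct smooth topology, which is the usual subtlety of immersed, non-embedded leaves. I would handle it via Lemma \ref{lemma_intersection_of_integral_submanifolds} and connected components of slices. Gluing maps (rather than sheaves) across the cover may also need the gluing statement for morphisms used in Lemma \ref{lma_similarly_immersed}.
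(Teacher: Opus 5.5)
Your proposal is correct and follows the paper's proof essentially step for step. The steps match: local pieces from Proposition \ref{prop_local_strongly_integral_submanifolds} are shrunk to open subsets of $R$ via Lemma \ref{lemma_intersection_of_integral_submanifolds}; transition diffeomorphisms come from Proposition \ref{prop_similarly_immersed_strongly_integral}, with the cocycle condition forced by the monomorphism property of Proposition \ref{prop_injective_immersions}; then come collation (the paper uses \cite[Proposition 3.31]{GTGM}), gluing of the maps $\iota_p$, and the conclusion via Lemma \ref{lemma_locally_strongly_integral}. The paper passes over the agreement of the smooth structures induced on the overlaps from $R$ and from $R_p$, which you flag; that point is indeed settled by the slice description, equivalently by the weak embeddedness of integral manifolds of the involutive distribution $D$.
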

\begin{proof}
From Proposition \ref{prop_local_strongly_integral_submanifolds} we know that for any point $p \in R$ there exists an integral submanifold $(\calR_p, \iota_p)$ where $p \in R_p \subseteq M$ and $\uline{\iota_p}$ is the inclusion map of $R_p$ into $M$. By further restricting them if necessary, let each $R_p$ be connected. From Proposition \ref{prop_underlying_distribution} we know that each $R_p \subseteq M$ is an integral submanifold for $D$, and by Lemma \ref{lemma_intersection_of_integral_submanifolds} we may, by restricting $R_p$ if necessary, assume that $R_p \in \op(R)$. Indeed, we first use the lemma to find that $R_p \cap R$ is an open subset of $R_p$ and so we may restrict the integral submanifold $\calR_p$ to it. Then we use the same lemma to see that (thus restricted) $R_p$ is an open subset of $R$. Thus $\{R_p\}_{p \in R}$ is an open cover of $R$.

The idea now is to use one of the collation theorems for $\Z$-manifolds, namely \cite[Proposition 3.31]{GTGM}. For every $p,q \in R$, Proposition \ref{prop_similarly_immersed_strongly_integral} ensures that the integral submanifolds $(\rest{\calR_p}{R_{pq}}, \rest{\iota_p}{R_{pq}})$ and $(\rest{\calR_q}{R_{pq}}, \rest{\iota_q}{R_{pq}})$ are equivalently immersed, where $R_{pq} := R_p \cap R_q$. In other words, there is a diffeomorphism $\vartheta_{pq} : \rest{\calR_q}{R_{pq}} \to \rest{\calR_p}{R_{pq}}$ such that $\uline{\vartheta_{pq}} = \id$ and $\rest{\iota_p}{R_{pq}} \circ \vartheta_{pq} = \rest{\iota_q}{R_{pq}}$. Moreover, for any $p,q,w \in R$ we can draw
\begin{equation}
\begin{tikzcd}[column sep = tiny]
&{}
	&{\calM}
		&{} \\
&{}
	&{}
		&{} \\
&{\rest{\calR_p}{R_{pqw}}}
\arrow[rr, "{\vartheta_{wp}}", pos = 0.3]
\arrow[rd, "{\vartheta_{qp}}"']
\arrow[ruu, "{\iota_p}"]
	&{}
		&{\rest{\calR_w}{R_{pqw}}}
		\arrow[luu, "{\iota_w}"'] \\
&{}
	&{\rest{\calR_q}{R_{pqw}}}
	\arrow[uuu, "{\iota_q}"', crossing over, pos = 0.7]
	\arrow[ru, "{\vartheta_{wq}}"']
		&{} \\
\end{tikzcd},
\end{equation}
where we omit explicitly writing the restrictions of morphisms. Here all the faces of the diagram are known to commute, except for the bottom one. Consequently, there is $\iota_w \circ \vartheta_{wq} \circ \vartheta_{qp} = \iota_w \circ \vartheta_{wp}$, again implicitly restricted. But as $\iota_w$ is a monomorphism by Proposition \ref{prop_injective_immersions}, there must be
\begin{equation}
    \vartheta_{wq} \circ \vartheta_{qp} = \vartheta_{wp},
\end{equation}
i.e. the cocycle condition is satisfied. As a result, \cite[Proposition 3.31]{GTGM} outputs a $\Z$-manifold $\calR$ along with diffeomorphisms $\lambda_p : \rest{\calR}{R_p} \to \calR_p$ such that $\uline{\lambda_p} = \id$ and $\lambda_q  =  \vartheta_{qp} \circ \lambda_p$, when restricted to $R_{pq}$. 

Since $(\calR_p, \iota_p)$ is an integral submanifold for $\scrD$ and $\lambda_p$ is a diffeomorphism, $(\rest{\calR}{R_p}, \iota_p \circ \lambda_p)$ is also an integral submanifold for $\scrD$ for every $p \in R$. Let us draw the diagram
\begin{equation}
    \begin{tikzcd}[column sep = tiny]
    {}
        &{\calM}
            &{}\\
    {\rest{\calR_p}{R_{pq}}}
    \arrow[ru, "{\iota_p}"]
    \arrow[rr, "{\vartheta_{qp}}"]
        &{}
            &{\rest{\calR_q}{R_{pq}}}
            \arrow[lu, "{\iota_q}"']\\
    {}
        &{\rest{\calR}{R_{pq}}}
        \arrow[lu, "{\lambda_p}"']
        \arrow[ru, "{\lambda_q}"]
            &{}\\
    \end{tikzcd},
\end{equation}
which commutes, since both its component triangles commute. As a result, the collection of graded smooth maps $\{\iota_p \circ \lambda_p\}_{p \in R}$ agree on overlaps, hence they glue together a graded smooth map
\begin{equation}
    \iota : \calR \to \calM,
\end{equation}
such that $\rest{\iota}{R_p} = \iota_p \circ \lambda_p$ for every $p \in R$. The map $\iota$ is easily seen to be an immersion such that $\uline{\iota}$ is the inclusion map. It follows from Lemma \ref{lemma_locally_strongly_integral} that $(\calR, \iota)$ is an integral submanifold for $\scrD$.
\end{proof}

We now combine all that we learned so far in the Global Frobenius theorem.

\begin{theorem}[Global Frobenius]\label{thm_global_frob}
Let $\scrD$ be an involutive distribution on $\calM$. Then for every leaf $R$ of the foliation given by the underlying distribution $D$ of $\scrD$ there exists a $\Z$-manifold $\calR = (R, \cifty_\calR)$ and an immersion $\iota : \calR \to \calM$, such that $\uline{\iota}$ is the inclusion map and $(\calR, \iota)$ is an integral submanifold for $\scrD$.

Furthermore, the following maximality and uniqueness conditions hold: for any other connected integral submanifold $(\calR', \iota')$ for $\scrD$ such that $\uline{\iota'}(R') \cap R \neq \emptyset$ there is $\uline{\iota'}(R') \in \op(R)$, and $(\calR', \iota')$ and $(\rest{\calR}{\uline{\iota'}(R')}, \rest{\iota}{\uline{\iota'}(R')})$ are equivalently immersed.
\end{theorem}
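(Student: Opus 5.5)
The existence part is a direct application of Proposition \ref{prop_globally_strongly_integral}. By Proposition \ref{prop_underlying_distribution}, the underlying distribution $D$ is involutive, so by the classical global Frobenius theorem its leaves exist. Each leaf $R$ is a connected, maximal integral submanifold of $D$, with its own smooth structure making the inclusion $R \hookrightarrow M$ an injective immersion. Feeding $R$ into Proposition \ref{prop_globally_strongly_integral} produces a $\Z$-manifold $\calR = (R, \cifty_\calR)$ and an immersion $\iota$ whose underlying map is the inclusion, such that $(\calR,\iota)$ is integral for $\scrD$. This settles the first paragraph of the statement.

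For maximality, take a connected integral submanifold $(\calR',\iota')$ with $\uline{\iota'}(R')\cap R\neq\emptyset$. By Proposition \ref{prop_underlying_distribution}, $(R',\uline{\iota'})$ is an integral submanifold for $D$; it is connected and meets the leaf $R$. I will invoke the classical fact that a connected integral manifold of an involutive distribution meeting a leaf is contained in that leaf and is an open subset of it \cite[Theorem 19.21 and Proposition 19.16]{ItSM}. Concretely, I would transport the smooth structure of $R'$ to $S := \uline{\iota'}(R')\subseteq M$ via the injective immersion $\uline{\iota'}$. Then $S$ is a connected integral submanifold of $D$ lying in $R$, and by Lemma \ref{lemma_intersection_of_integral_submanifolds} it is open in $R$. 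I also need that the topology of $S$ coming from $R'$ agrees with the subspace topology from $R$. This follows since both are integral manifolds of $D$ (again Lemma \ref{lemma_intersection_of_integral_submanifolds} applied to $S$ and $R$), so $S\in\op(R)$.

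For uniqueness, I compare $(\calR',\iota')$ with $(\calR|_S,\iota|_S)$. The latter is integral by Lemma \ref{lemma_locally_strongly_integral} or Lemma \ref{lemma_locally_tangent}. Both are integral submanifolds for the involutive $\scrD$ with the same image $S$, so Proposition \ref{prop_similarly_immersed_strongly_integral} gives that they are equivalently immersed.

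The main obstacle is the topological step: showing that $\uline{\iota'}(R')$ lies inside the leaf $R$, and that its topology as the image of $R'$ coincides with the topology it inherits as an open subset of $R$. This is needed both for ``$\in\op(R)$'' and for the restriction $\calR|_S$ to make sense. I would handle it with flat charts for $D$ and the countable-slice argument of \cite[Proposition 19.16]{ItSM}, as in Lemma \ref{lemma_intersection_of_integral_submanifolds}. From that lemma, $S \cap R$ is open and closed in the connected $S$, hence equals $S$. Then, applying the lemma in the other direction, $S$ is open in $R$, with compatible topologies because both are integral manifolds of the same involutive distribution, which is weakly embedded.
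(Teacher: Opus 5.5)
Your proposal is correct and follows the paper's proof step for step. Existence comes from Proposition \ref{prop_globally_strongly_integral} applied to a leaf of the involutive $D$; containment and openness of $\uline{\iota'}(R')$ in $R$ come from connectedness and Lemma \ref{lemma_intersection_of_integral_submanifolds}; and uniqueness comes from Proposition \ref{prop_similarly_immersed_strongly_integral}. You additionally spell out topological details the paper leaves implicit: the open--closed argument, and the compatibility of topologies. One small citation slip: that $(\calR|_S,\iota|_S)$ is integral follows from item 3 of Lemma \ref{lemma_restrictively_related_VFs}, not from Lemma \ref{lemma_locally_strongly_integral}, which goes in the local-to-global direction.
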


\begin{proof}
    From Proposition $\ref{prop_underlying_distribution}$ we know that the underlying distribution $D$ is involutive. From the global Frobenius theorem for smooth manifolds we obtain a foliation of $M$ by integral submanifolds of $D$. Let $R$ be a leaf of this foliation. By Proposition \ref{prop_globally_strongly_integral} there exists an integral submanifold $\calR = (R, \iota)$ for $\scrD$ such that $\uline{\iota}$ is the inclusion map. 
    
    Let $\calR' = (R', \iota')$ be some connected integral submanifold for $\scrD$ such that $\uline{\iota'}(R') \cap R \neq \emptyset$. Since $R'$ is connected, so is $\uline{\iota'}(R')$ and so there must be $\uline{\iota'}(R') \subseteq R$. Because $\uline{\iota'}(R')$ is also an integral submanifold for $D$, $\uline{\iota'}(R')$ must be open in the topology of $R$ by Lemma \ref{lemma_intersection_of_integral_submanifolds}. It then follows from Proposition \ref{prop_similarly_immersed_strongly_integral} that $(\calR', \iota')$ and $(\calR|_{\uline{\iota'}(R')}, \iota|_{\uline{\iota'}(R')})$ are equivalently immersed.
\end{proof}

We finish the section with an important example of an involutive distribution: the kernel of $T\varphi$, where $\varphi$ is a submersion.

\begin{example}(Submersion Level Sets)\label{example_kernel_of_sumbersion}
    Consider a submersion $\varphi : \calM \to \calN$. One can define $\ker(T\varphi)\subseteq \scrX_\calM$ as the kernel of the induced morphism of sheaves of $\cifty_\calM$-modules $\hat{T}\varphi : \scrX_\calM \to \varphi^\star(\scrX_\calN)$, see (\ref{diag_pullback_tangent_bundle}) for the notation. Since $\varphi$ is a submersion, $T\varphi$ is fiberwise surjective and so $\ker(T\varphi)$ is a subbundle of $T\calM$, see \cite[Proposition 3.14]{3GVB}. It then follows from Corollary \ref{cor_related_VFs} that $X \in [\ker(T\varphi)](M)$ if and only if $X \sim_\varphi 0$, which in turn immediately implies that $\ker(T\varphi)$ is an involutive distribution.

    We also know that for every point $q \in \uline{\varphi}(M)$ there exists the \textbf{level set submanifold}, i.e. a closed embedded submanifold $(\varphi^{-1}(q), \iota_q)$ of $\calM$ with $\uline{\varphi}^{-1}(q)$ as its underlying smooth manifold, which is unique inasmuch as it is a categorical pullback
    \begin{equation}\label{diag_level_set}
        \begin{tikzcd}
            \varphi^{-1}(q)
            \arrow[r, "{}"]
            \arrow[d, "{\iota_q}"]
                &\{\ast\}
                \arrow[d, "{q}"] \\
            \calM
            \arrow[r, "{\varphi}"]
                &\calN
        \end{tikzcd},
    \end{equation}
    in the category of $\Z$-manifolds, per \cite[Theorem 7.43]{GTGM}. Here $q : \{\ast\} \to \calN$ denotes the one-point embedding of $q$ into $\calN$, and the unmarked arrow is the unique arrow to a terminal object. Let us argue that $(\varphi^{-1}(q), \iota_q)$ is an integral submanifold for $\ker(T\varphi)$ for any $q \in \uline{\varphi}(M)$. 

    Fix some $q \in \uline{\varphi}(M)$. Note that the underlying distribution of $\ker(T\varphi)$ is $\ker(T \uline{\varphi}) \subseteq \scrX_M$. Using the classical result, we know that $\uline{\varphi}^{-1}(q)$ is an integral submanifold for $\ker(T\uline{\varphi})$. Since $\ker(T\varphi)$ is involutive, by Proposition \ref{prop_globally_strongly_integral} there exists an integral submanifold $(\calR, \iota)$ of $\ker(T\varphi)$, such that $R = \uline{\varphi}^{-1}(q)$ and $\uline{\iota}$ is the inclusion map. 
    
    Let us argue that $(\calR, \iota)$ also fits in the place of $(\varphi^{-1}(q), \iota_q)$ in Diagram (\ref{diag_level_set}). One must show that $\varphi \circ \iota = q \circ t$ where we denoted as $t$ the arrow to the terminal object $\calR \to \{\ast\}$. On the level of underlying smooth maps this is true, as for any $p \in R \equiv \uline{\varphi}^{-1}(q)$ there is $(\uline{\varphi} \circ \uline{\iota}')(p) = q = (q \circ t)(p)$. On the level of pullbacks we have, for any $U \in \op_q(N)$ and $f \in \cifty_\calN(U)$, that $(q \circ t)^\ast(f) = \uline{f}(q)$. We must show that also $(\iota^\ast \circ \varphi^\ast)(f) = \uline{f}(q)$. 
    From Proposition \ref{prop_equivalently_integral} we know that $\im(\hat{T}\iota) = \iota^\star\ker(T\varphi)$. It follows that the induced map $\hat{T}(\varphi \circ \iota) : T \calR \to (\varphi \circ \iota)^\star T \calN$ factors through $\iota^\star \ker(T \varphi)$, and hence must be zero. Thus, for any $Y \in \scrX_\calR(R)$ there is $Y \sim_{\varphi \circ \iota} 0$ by Corollary \ref{cor_related_VFs}, i.e.
    \begin{equation}
        Y\left[(\iota^\ast \circ \varphi^\ast)(f)\right] = 0.
    \end{equation}
    From Lemma \ref{lemma_frame_on_function_zero} it follows that $(\iota^\ast \circ \varphi^\ast)(f)$ is a locally constant function. Since $\uline{(\iota^\ast \circ \varphi^\ast)(f)} = \uline{f}\circ\uline{\varphi} \circ \uline{\iota}$, it follows that $(\iota^\ast \circ \varphi^\ast)(f) = \uline{f}(q)$. We have shown that $\varphi \circ \iota = q \circ t$.

    As a result, there exists $\psi : \calR \to \varphi^{-1}(q)$ such that, in particular, $\iota_q \circ \psi = \iota$. Consequently, $T \iota_q \circ T\psi = T \iota$, which means that $T\psi$ is fiberwise bijective, and so $\psi$ is a local diffeomorphism at every point $p \in R$. Since $\uline{\psi} = \id$, we conclude that $\psi$ is a diffeomorphism such that $\iota_q \circ \psi = \iota$. In other words, $(\calR, \iota)$ and $(\varphi^{-1}(q), \iota_q)$ are equivalently immersed. By Lemma \ref{lemma_strongly_weakly_integral}, $(\varphi^{-1}(q), \iota_q)$ is an integral submanifold for $\ker(T\varphi)$.
\end{example}

\section{Pointwise Involutivity and the Converse Implication}\label{section_6}

We have seen that an involutive distribution is integrable, and in Theorem \ref{thm_global_frob} we extended this knowledge with maximality and uniqueness, in a suitable sense. We begin this final section with two examples of distributions which are integrable, but not involutive. We then search for a weaker notion of involutivity, which does follow from integrability.

\begin{example}\label{example_integrable_noninvolutive_distr}
Consider the one-point manifold $\calM = \{\ast\}^{(m_j)}$ with coordinates $\xi^1, \xi^2,$ of degree 1 and $ \eta$ of degree 3, and $\calR = \{\star\}^{(s_j)}$ with coordinate $\theta$ of degree 1. Let $\iota : \calR \to \calM$ be given by $\iota^\ast(\xi^1) = \theta, \ \iota^\ast(\xi^2) = 0$ and $\iota^\ast(\eta) = 0$. Consider a distribution $\scrD$ on $\calM$ generated by the vector field
\begin{equation}
X  = \partial_{\xi^1} + \xi^1 \xi^2 \, \partial_{\eta}.
\end{equation}
With direct calculation we find that $[X,X] = 2\xi^2\partial_{\eta}$, ergo $\scrD$ is not involutive. Let us show that $(\calR, \iota)$ is an integral submanifold for $\scrD$. We do this by verifying that $X$ is tangent to $\calR$ using Proposition \ref{prop_embedded_tangent_criterion} and Lemma \ref{lemma_equivalent_strongly_integral}. We go through all possible degrees of graded functions $f \in \cifty_\calM(\ast)$.
\begin{itemize}
\item $|f| = 0$. Any such has $Xf = 0$ for degree reasons.
\item $|f| = 1$. Then $f = \alpha \xi_1 + \beta \xi_2$ for some $\alpha, \beta \in \R$. We have $\iota^\ast(f) = \alpha \theta$ which is zero iff $\alpha = 0$. For any such $f$ we have $Xf = \alpha = 0$, and so $\iota^\ast(Xf) = 0$.
\item $|f| = 2$. Then $f = \alpha \xi_1 \xi_2$ for some $\alpha \in \R$. Always $\iota^\ast(f) = 0$ and $Xf = \alpha \xi_2$, hence $\iota^\ast(Xf) = 0$.
\item $|f| \in \{3,4,5\}$. For degree reasons always $\iota^\ast(f) = 0$ and also $\iota^\ast(Xf) = 0$.
\end{itemize}
This shows that $(\calR, \iota)$ is indeed an integral submanifold for $\scrD$.
\end{example}

In the above example involutivity fails due to a non-homological generator of the distribution. This is not the case for the next example.

\begin{example}\label{example_integrable_noninvolutive_distr2}
Let $\calM = (\R^2)^{(m_j)}$ with graded coordinates $\xi$ of degree -1 and $\eta$ of degree 1, and standard coordinates $x,y$ on $\R^2$. Consider two global vector fields $X,Y$ given by
\begin{equation}
X := \partial_{x} + \xi \partial_{\xi}, \qquad Y:= \partial_{\eta} + \xi\partial_{y},
\end{equation}
of degrees $|X| = 0$ and $|Y| = -1$. We see that at every point $p \in M \equiv \R^2$, the tangent vectors $X|_p = \partial_{x}|_p, Y|_{p} = \partial_{\eta}|_{p}$ are linearly independent, hence we may define a distribution $\scrD$ on $\calM$ as their $\cifty_\calM$-span. The vector fields $X,Y$ then form a global frame for $\scrD$. Direct computation yields
\begin{equation}
[X,Y] = \xi \partial_{y} \ \notin \scrD(M),
\end{equation}
so $\scrD$ is not involutive. Let us also note that $[Y,Y] \equiv 2Y^2 = 0$, i.e. that $Y$ is homological. Now consider $\calR = \R^{(r_j)}$ with coordinates $t$ of degree zero and $\theta$ of degree one. Fix some $y_0 \in \R$ and let $\iota: \calR \to \calM$ be defined as $\uline{\iota}(q) = (q,y_0)$ and
\begin{align}
\iota^\ast(x) := t, \quad \iota^\ast(y) := y_0, \quad \iota^\ast(\xi) := 0, \quad \iota^\ast(\eta) := \theta.
\end{align}
First, we see that $(T_q\iota)(\partial_{t}|_{q}) = \partial_{x}|_{(q, y_0)}$ and $(T_q\iota)(\partial_{\theta}|_{q}) = \partial_{\eta}|_{(q,y_0)}$, which means that $T_q\iota$ is injective for every $q \in \R$ and so $\iota$ is an immersion. In fact, since $\uline{\iota}$ is just the insertion of $\R$ as the constant slice $y = y_0$ in $\R^2$, it is an embedding.

We ask whether $(\calR, \iota)$ is an integral submanifold for $\scrD$ for any value of $y_0$. Clearly the ranks of $\scrX_\calR$ and $\scrD$ agree, so let us verify that both $X$ and $Y$ are tangent to $\calR$. We will again use the criterion in Proposition \ref{prop_embedded_tangent_criterion}, degree by degree:
\begin{itemize}
\item $|f| = -1$. For degree reasons any such $f$ has $\iota^\ast(f) = 0$, $\iota^\ast(Xf)=0$ and $\iota^\ast(Yf) = 0$.
\item $|f| = 0$. Then $f = \alpha(x,y) + \beta(x,y)\xi\eta$ for some $\alpha, \beta \in \cifty(\R^2)$. Note that we defined $\iota^\ast(x) = x \circ \uline{\iota}$ and $\iota^\ast(y) = y \circ \uline{\iota}$, i.e. with no purely graded part, we have $\iota^\ast(f) =  \iota^\ast(\alpha) = \alpha \circ \uline{\iota} \equiv \alpha(\cdot, y_0)$, and we see that $\iota^\ast(f) = 0$ iff $\alpha(t, y_0) = 0,$ for every $t \in \R.$ In such case we have
\begin{equation}
Xf = \dd{\alpha}{x} + \dd{\beta}{x}\xi \eta + \beta \xi \eta \implies \iota^\ast(Xf)  = \partial_1\alpha(t,y_0) = 0.
\end{equation}
Finally, $\iota^\ast(Yf) = 0$ for degree reasons.
\item $|f| = 1$. Then $f = \alpha(x,y)\eta$ for some $\alpha \in \cifty(\R^2)$. We have $\iota^\ast(f) = 0$ if and only if $\alpha(t,y_0) = 0$ for all $t \in \R$. For such $f$ there is $Xf = (\partial_x\alpha)\eta$ hence $\iota^\ast(Xf) = (\partial_1\alpha)(t,y_0)\theta = 0$ for all $t \in \R$. Similarly, $Yf = \alpha(x,y) + (\partial_2\alpha)(x,y)\xi\eta$ and so $\iota^\ast(Yf) = \alpha(t,y_0) + 0 = 0$ for all $t \in \R$. 
\end{itemize}
All this means that $(\calR, \iota)$ is indeed an integral manifold for $\scrD$ for any value of $y_0$.
\end{example}

In both examples we saw that the distribution was not involutive, but only up to a vector field which is zero at every point. This is a general result, which we now formalize.

\begin{definition}\label{def_pointwise_involutive}
    We call a distribution $\scrD$ on $\calM$ \textbf{pointwise involutive} if for every $U \in \op(M)$ and all $X,X' \in \scrD(U)$ there is $[X,X']|_{p} \in \scrD_{p}$ for every $p \in U$.
\end{definition}

In exactly the same way as in Lemma \ref{lemma_local_to_global_involutivity}, one can show that pointwise involutivity can be verified globally, or locally using a frame.

\begin{theorem}\label{thm_on_foliated_distributions}
Let $\scrD$ be an integrable distribution on $\calM$. Then $\scrD$ is pointwise involutive.
\end{theorem}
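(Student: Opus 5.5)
We need to show that for any $U$, any $X,X'\in\scrD(U)$ and any $p\in U$, the tangent vector $[X,X']|_p$ lies in $\scrD_p$. Fix $p\in U$. Integrability of $\scrD$ gives an integral submanifold $(\calR,\iota)$ with $p=\uline{\iota}(q)$ for some $q\in R$. By Lemma \ref{lemma_equivalent_strongly_integral}, both $X$ and $X'$ (restricted to $U$) are tangent to $\calR$. So there exist $Y,Y'\in\scrX_\calR(\uline{\iota}^{-1}(U))$ with $Y\sim_\iota X$ and $Y'\sim_\iota X'$.

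The key step is that $\iota$-relatedness is preserved by graded commutators. Directly from $Y\circ\iota^\ast=\iota^\ast\circ X$ and $Y'\circ\iota^\ast=\iota^\ast\circ X'$ (restricted as in Lemma \ref{lemma_restrictively_related_VFs}) we get
\begin{equation}
[Y,Y']\circ\iota^\ast = Y\circ\iota^\ast\circ X' - (-1)^{|Y||Y'|}Y'\circ\iota^\ast\circ X = \iota^\ast\circ[X,X'],
\end{equation}
using $|Y|=|X|$ and $|Y'|=|X'|$. Hence $[Y,Y']\sim_\iota[X,X']$.

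Evaluating at $q$ then gives $(T_q\iota)([Y,Y']|_q)=[X,X']|_p$. This follows because related vector fields have related values at points; equivalently, via Corollary \ref{cor_related_VFs}, $(\hat T\iota)([Y,Y'])=\iota^\star[X,X']$, and we take fibers at $q$ using that $\hat\iota_{(q)}$ is an isomorphism. Therefore $[X,X']|_p\in(T_q\iota)(T_q\calR)$. Since every integral submanifold is fiberwise integral (Proposition \ref{prop_equivalently_integral}), $(T_q\iota)(T_q\calR)=\scrD_p$. This gives $[X,X']|_p\in\scrD_p$. As $p$ and $U$ were arbitrary, the theorem follows.

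Two points need care, and I expect the first to be the main (mild) obstacle: checking the identity $[Y,Y']\sim_\iota[X,X']$ with correct signs and restrictions to open sets. It holds because both sides are computed from the relation on functions, and Lemma \ref{lemma_restrictively_related_VFs} handles restricting $\iota$ to $\uline{\iota}^{-1}(U)$. The second point is to justify the passage from relatedness to the pointwise statement $T_q\iota(Y|_q)=X|_{\uline\iota(q)}$. This is routine, by applying both sides to germs of functions at $p$.

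Note that the argument only sees the value of $[X,X']$ at points of $\uline{\iota}(R)$. This is exactly why it yields pointwise rather than full involutivity.
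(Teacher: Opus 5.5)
Your proof is correct and uses the same mechanism as the paper's. Brackets of $\iota$-tangent fields are again tangent: the paper uses this without proof, and you verify it via $[Y,Y']\sim_\iota[X,X']$. Tangent fields evaluated at $\uline{\iota}(q)$ then land in $(T_q\iota)(T_q\calR)=\scrD_{\uline{\iota}(q)}$.

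The difference is only organizational. The paper takes an adapted frame $\{X_i\}_{i=1}^m$, shows the complementary part $Z^{(i,j)}$ of $[X_i,X_j]$ vanishes at every point, and extends to arbitrary $X,X'$ by the Leibniz rule. You apply the argument directly to arbitrary $X,X'\in\scrD(U)$, which is slightly more streamlined.
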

\begin{proof}
    Let us fix some $p \in M$. Since $\scrD$ is a distribution, there exists $U \in \op_p(M)$ and a frame $\{X_i\}_{i = 1}^m$ for $\scrX_\calM(U)$ such that $\{X_i\}_{i = 1}^{r}$ is a frame for $\scrD(U)$. Consider some $i,j \in \{1, \dotsc, r\}, \ i \leq j$ and write
    \begin{equation}\label{eq_pointwise_frob_1}
        [X_i, X_j] = \underbrace{\sum_{k = 1}^{r}f_{k}^{(i,j)} X_k}_{=: Y^{(i,j)}} + \underbrace{\sum_{\ell = r+1}^{m}f_\ell^{(i,j)} X_\ell}_{ =: Z^{(i,j)}},
    \end{equation}
    for some graded functions $f_k^{(i,j)}, f_{\ell}^{(i,j)} \in \cifty_\calM(U)$. Clearly $Y^{(i,j)} \in \scrD(U)$. Let us show that $Z^{(i,j)}|_q = 0$ for any $q \in U$. By assumption, any $q \in U$ is contained within the image $\uline{\iota}(R)$ of some integral submanifold $(\calR, \iota)$ for $\scrD$. Since for every $k \leq r$ the vector field $X_k$ is tangent to $\calR$, then so are $[X_i, X_j]$ and $Z^{(i,j)}$ for every $i \leq j \leq r$. Consequently $Z^{(i,j)}$ must also be tangent to $\calR$ and in particular $Z^{(i,j)}|_{q} \in \scrD_{q}$. Recall that $\{X_i|_{q}\}_{i = 1}^m$ is a basis of $T_q\calM$, and the subset $\{X_i|_{q}\}_{i = 1}^r$ is a basis of $\scrD_{q}$. However, in (\ref{eq_pointwise_frob_1}) we see that $Z^{(i,j)}|_{q}$ is a linear combination of $X_\ell|_{q} $ for $\ell > r$, hence necessarily $Z^{(i,j)}|_{q} = 0$.

    Next, consider some arbitrary $X,X' \in \scrD(U)$. From the Leibniz rule it follows that we can write
    \begin{equation}
    \begin{split}
        [X,X'] &= \sum_{k = 1}^r g_k X_k + \sum_{i \leq j} g_{ij} \, [X_i, X_j] \\
        &= \underbrace{\sum_{k = 1}^r g_k X_k + \sum_{i \leq j} g_{ij}  \, Y^{(i,j)}}_{=: Y} + \underbrace{\sum_{i \leq j} g_{ij}  \, Z^{(i,j)}}_{=: Z},
    \end{split}
    \end{equation}
    for some graded functions $g_k, g_{ij} \in \cifty_\calM(U)$. We see that $Y \in \scrD(U)$ and $Z|_{q} = 0$ for every $q \in U$.

\end{proof}

\begin{remark}
     In Theorem \ref{thm_on_foliated_distributions}, the converse implication  does not hold; see Example \ref{example_pointwise_involutive_nonintegrable} for a pointwise involutive distribution which admits no integral submanifolds. Furthermore, in the wording of the theorem, one cannot strengthen pointwise involutivity to involutivity; see Examples \ref{example_integrable_noninvolutive_distr} and \ref{example_integrable_noninvolutive_distr2}.
\end{remark}

The following example illustrates that Theorem \ref{thm_on_foliated_distributions} is indeed only an implication.

\begin{example}\label{example_pointwise_involutive_nonintegrable}
    Consider a one point manifold $\calM$ with coordinates $\xi^1, \xi^2$ of degree $-1$ and $\eta^1, \eta^2$ of degree $1$. On $\calM$ let the distribution $\scrD$ be given as a span of the following vector fields: 
    \begin{equation}
        X:= \partial_{\xi^1}, \quad Y := \partial_{\xi^2}, \quad Z := \partial_{\eta^1} + \xi^1 \eta^1 \partial_{\eta^2}
    \end{equation}
     The only non-zero commutators of the generators are
    \begin{equation}
        [X,Z] = \eta^1 \partial_{\eta^2}, \quad [Z,Z] = -2\,\xi^1\partial_{\eta^2},
    \end{equation}
    which are zero at every point. From the Leibniz rule it follows that $\scrD$ is pointwise involutive. We will show that $\scrD$ admits no integral submanifolds. Suppose that one exists and call it $(\calR, \iota)$. Then necessarily $R$ is a one-point manifold, and hence must be diffeomorphic to a graded domain. Thus, without loss of generality, let $\calR = \{\star\}^{(r_j)}$. Denote the two degree $-1$ coordinates as $\zeta^1, \zeta^2$ and the degree $1$ coordinate as $\theta$. We have 
    \begin{align}
        \iota^\ast(\xi^i) &= {A^{i}}_j \, \zeta^j + B^i \, \zeta^1 \zeta^2 \theta, \\
        \iota^\ast(\eta^i) &= C^i \, \theta,
    \end{align}
    for some ${A^i}_j, B^i, C^i \in \R$. Direct computation of the tangent map yields
    \begin{equation}
        (T_\star\iota)(\rest{\partial_{\zeta^i}}{\star}) = {A^j}_i \, \rest{\partial_{\xi^j}}{\ast}, \qquad (T_\star\iota)(\rest{\partial_{\theta}}{\star}) = C^j \rest{\partial_{\eta^j}}{\ast}.
    \end{equation}
    As $\iota$ is assumed to be an immersion, ${A^i}_j$ must form an invertible matrix and at least one of $C^1, C^2$ must be non-zero. Furthermore, as $(\calR, \iota)$ is, in particular, a fiberwise integral submanifold for $\scrD$, there must be $C^2 = 0$, hence $C^1 \neq 0$ and we have $\iota^\ast(\eta^1) = C^1 \, \theta$ and $\iota^\ast(\eta^2) = 0$. Now consider the degree zero function $f := \xi^2 \eta^2$ on $\calM$, which clearly satisfies $\iota^\ast(f) = 0$. Observe that $Zf =  - \xi^2 \xi^1 \eta^1 = \xi^1 \xi^2 \eta^1$, and so
    \begin{equation}
    \begin{split}
         \iota^\ast(Zf) &= \left({A^1}_j \, \zeta^j + B^1 \zeta^1 \zeta^2 \theta\right)\left( {A^2}_j \, \zeta^j + B^2 \zeta^1 \zeta^2 \theta\right)  C^1 \, \theta = C^1\, \theta \, {A^1}_j \, \zeta^j {A^2}_k \, \zeta^k  \\
        &= C^1 \, \left({A^1}_1 {A^2}_2 - {A^1}_2 {A^2}_1  \right)\zeta^1 \zeta^2 \, \theta = C^1 \, \det(A) \, \zeta^1 \zeta^2 \theta.
    \end{split}
    \end{equation}
    This is evidently not zero, which is in contradiction with $(\calR, \iota)$ being an embedded integral submanifold for $\scrD$. Hence $\scrD$ admits no integral submanifolds.
\end{example}

One may ask whether in the statement of Theorem \ref{thm_on_foliated_distributions} one could require every point to be contained within a fiberwise integral submanifold. The final example shows that this is not enough.

\begin{example}\label{ex_final}
    Let $\calM = \{\ast\}^{(m_j)}$ be a one point manifold with coordinates $\xi^1, \xi^2$ of degree 1 and $\eta$ of degree 2. Let $\scrD$ be a distribution generated by the vector fields
    \begin{equation}
        X := \partial_{\xi^1}, \qquad Y := \partial_{\xi^2} + \xi^1 \partial_\eta,
    \end{equation}
    where both vector fields are of degree $-1$. The distribution is not pointwise involutive, as
    \begin{equation}
        [X,Y] = \partial_\eta.
    \end{equation}
    We will show that $\scrD$ admits a fiberwise integral submanifold. Let $(\calR, \iota)$ be any immersed (thus embedded) submanifold of $\calM$. Without loss of generality let $\calR$ be the graded domain $\{\star\}^{(r_j)}$. If it is to be a fiberwise integral submanifold, it must have coordinates $\zeta^1, \zeta^2$ of degree 1. The most general form of $\iota$ is then given by
    \begin{equation}
        \iota^\ast(\xi^i) = {A^i}_j \, \zeta^j, \qquad \iota^\ast(\eta) = B\, \zeta^1 \zeta^2,
    \end{equation}
    for some ${A^i}_j, B \in \R$. The tangent map gives
    \begin{equation}   
    (T_\ast \iota)(\partial_{\zeta^i}|_{\ast}) = {A^j}_i \, \partial_{\xi^j}|_{\ast}.
    \end{equation}
    Again, since $\iota$ is an immersion, $A$ must be an invertible matrix. If this is so, then $(\calR, \iota)$ is automatically a fiberwise integral submanifold for $\scrD$.

    From Theorem \ref{thm_on_foliated_distributions} we know there cannot be any integral submanifold for $\scrD$. As a curiosity, let us verify that this is so. For contradiction suppose that $(\calR, \iota)$ is integral, and consider the degree 2 function
    \begin{equation}
        f := \eta - \frac{B}{\det(A)}\xi^1 \xi^2,
    \end{equation}
    which is constructed so that $\iota^\ast(f) = 0$. By integrality there must be $\iota^\ast(Xf) = 0$ and $\iota^\ast(Yf) = 0$. We can write
    \begin{equation}
    0 = \iota^\ast \left(Xf \right) = \iota^\ast \left( - \frac{B}{\det(A)}\xi^2 \right) = -\frac{B}{\det(A)} {A^2}_{j} \, \zeta^j.
    \end{equation}
    Note that at least one of ${A^2}_1, \ {A^2}_2$ must be non-zero, otherwise $A$ would be singular, hence $B = 0$. Next we have
    \begin{equation}
        0 = \iota^\ast \left(Yf \right) =  \iota^\ast \left( \xi^1 \right) = {A^1}_j \, \zeta^j.
    \end{equation}
    Again, at least one of ${A^1}_1, \ {A^1}_2$ must be non-zero, which yields the contradiction.
\end{example}

\addcontentsline{toc}{section}{References}

\begingroup

\hypersetup{hidelinks}
      \printbibliography

@article{3GVB,
title = {Threefold nature of graded vector bundles},
journal = {Journal of Geometry and Physics},
volume = {216},
pages = {105557},
year = {2025},
issn = {0393-0440},
doi = {https://doi.org/10.1016/j.geomphys.2025.105557},
url = {https://www.sciencedirect.com/science/article/pii/S039304402500141X},
author = {Rudolf \v{S}molka and Jan Vysok\'{y}}
}

@book{ItSM,
  title     = {Introduction to Smooth Manifolds},
  author    = {Lee, John M.},
  year      = {2012},
  edition   = {2},
  series    = {Graduate Texts in Mathematics},
  volume    = {218},
  publisher = {Springer-Verlag},
  address   = {New York},
  isbn      = {978-1-4419-9982-5},
  doi       = {10.1007/978-1-4419-9982-5}
}

@article{GTGM,
author = {Vysok\'{y}, Jan},
title = {Global theory of graded manifolds},
journal = {Reviews in Mathematical Physics},
volume = {34},
number = {10},
pages = {2250035},
year = {2022},
doi = {10.1142/S0129055X22500350},
URL = {https://doi.org/10.1142/S0129055X22500350}
}

@misc{Z2nFT,
title={{The Frobenius theorem for $\mathbb{Z}^n_2$-supermanifolds}}, 
author={Tiffany Covolo and Stephen Kwok and Norbert Poncin},
year={2016},
eprint={1608.00961},
archivePrefix={arXiv},
primaryClass={math.DG},
url={https://arxiv.org/abs/1608.00961}, 
}

@article{NFT,
  author       = {Bursztyn, Henrique and Cueca, Miquel and Mehta, Rajan Amit},
  title        = {A geometric characterization of {$\mathbb{N}$}‑graded manifolds and the Frobenius theorem},
  journal      = {Journal of Noncommutative Geometry},
  year         = {2025},
  doi          = {10.4171/JNCG/623},
  url          = {https://doi.org/10.4171/JNCG/623}
}

@misc{Zflows,
      title={Flows on Graded Manifolds}, 
      author={\v{S}molka, Rudolf and Vysok\'{y}, Jan},
      year={2026},
      eprint={2605.22910},
      eprinttype={arXiv},
      eprintclass={math.DG},
      url={https://arxiv.org/abs/2605.22910}, 
}

@book{Hartshorne1977,
  publisher = {Springer New York},
  author    = {Hartshorne, Robin},
  title     = {Algebraic Geometry},
  journal   = {Graduate Texts in Mathematics},
  year      = {1977},
  doi       = {10.1007/978-1-4757-3849-0}
}

@article{Monterde1997,
  title = {Geometric properties of involutive distributions on graded manifolds},
  volume = {8},
  ISSN = {0019-3577},
  url = {http://dx.doi.org/10.1016/S0019-3577(97)89122-7},
  DOI = {10.1016/s0019-3577(97)89122-7},
  number = {2},
  journal = {Indagationes Mathematicae},
  publisher = {Elsevier BV},
  author = {Monterde,  Juan L. and Muñoz-Masqué,  Jaime and Sánchez-Valenzuela,  Oscar A.},
  year = {1997},
  %month = {June},
  pages = {217–246}
}

@article{Bruzzo1985,
  title = {Differential equations,  Frobenius theorem and local flows on supermanifolds},
  volume = {18},
  ISSN = {1361-6447},
  url = {http://dx.doi.org/10.1088/0305-4470/18/3/017},
  DOI = {10.1088/0305-4470/18/3/017},
  number = {3},
  journal = {Journal of Physics A: Mathematical and General},
  publisher = {IOP Publishing},
  author = {Bruzzo, Ugo and Cianci,  Roberto},
  year = {1985},
  month = Feb,
  pages = {417–423}
}

@article{Deahna1840,
author = {Deahna, Feodor},
journal = {Journal für die reine und angewandte Mathematik},
language = {ger},
pages = {340-349},
title = {Ueber die Bedingungen der Integrabilität lineärer Differentialgleichungen erster Ordnung zwischen einer beliebigen Anzahl veränderlicher Größen.},
url = {http://eudml.org/doc/147105},
volume = {20},
year = {1840},
}

@article{Frobenius1877,
author = {Frobenius, Ferdinand Georg},
journal = {Journal für die reine und angewandte Mathematik},
pages = {230-315},
title = {Ueber das Pfaffsche Problem.},
url = {http://eudml.org/doc/148315},
volume = {82},
year = {1877},
}

@article{Clebsch1866,
author = {Clebsch, Alfred},
journal = {Journal für die reine und angewandte Mathematik},
language = {ger},
pages = {257-268},
title = {Ueber die simultane Integration linearer partieller Differentialgleichungen.},
url = {http://eudml.org/doc/147970},
volume = {65},
year = {1866},
}

@article{Androulidakis2009,
  title = {The holonomy groupoid of a singular foliation},
  volume = {2009},
  ISSN = {1435-5345},
  url = {http://dx.doi.org/10.1515/CRELLE.2009.001},
  DOI = {10.1515/crelle.2009.001},
  number = {626},
  journal = {Journal f\"{u}r die reine und angewandte Mathematik (Crelles Journal)},
  publisher = {Walter de Gruyter GmbH},
  author = {Androulidakis,  Iakovos and Skandalis,  Georges},
  year = {2009},
  month = Jan,
  pages = {1–37}
}

@inbook{CrainicFernandes2011,
title = "Lectures on integrability of Lie brackets",
author = "Crainic, Marius and Fernandes, Rui Loja",
year = "2011",
language = "English (US)",
volume = "17",
series = "Geom. Topol. Monogr.",
publisher = "Geom. Topol. Publ., Coventry",
pages = "1--107",
booktitle = "Lectures on Poisson geometry",

}

\endgroup

\end{document}